\theoremstyle{plain}
\newtheorem{thm}{Theorem}
\newtheorem{prop}{Proposition}[section]
\newtheorem{lem}[prop]{Lemma}
\newtheorem{cor}[prop]{Corollary}
\newtheorem{rmk}[prop]{Remark}
\newcommand {\R} {\mathbb{R}} 
 \newcommand {\N} {\mathbb{N}}
\newcommand {\C} {\mathbb{C}} 
\newcommand {\p} {\partial}
\newcommand {\D} {\Delta}
\newcommand {\supp} {\text{supp}}
\newcommand{\tu}{\tilde{u}}
\title{On the Fractional Landis Conjecture}
\author[R\"uland]{Angkana R\"uland}
\address{Max-Planck-Institute for Mathematics in the Sciences, Inselstr. 22, 04103 Leipzig, Germany}
\email{rueland@mis.mpg.de}
\author[Wang]{Jenn-Nan Wang}
\address{Institute of Applied Mathematical Sciences, NCTS, National Taiwan University \\
Taipei 106, Taiwan}
\email{jnwang@math.ntu.edu.tw}
\thanks{Wang is supported in part by MOST 105-2115-M-002-014-MY3.}
\begin{document}

\begin{abstract}
In this paper we study a Landis-type conjecture for fractional Schr\"odinger equations of fractional power $s\in(0,1)$ with potentials. We discuss both the cases of differentiable and non-differentiable potentials. On the one hand, it turns out for \emph{differentiable} potentials with some a priori bounds, if a solution decays at a rate $e^{-|x|^{1+}}$, then this solution is trivial. On the other hand, for $s\in(1/4,1)$ and merely bounded \emph{non-differentiable} potentials, if a solution decays at a rate $e^{-|x|^\alpha}$  with $\alpha>4s/(4s-1)$, then this solution must again be trivial. Remark that when $s\to 1$, $4s/(4s-1)\to 4/3$ which is the optimal exponent for the standard Laplacian. For the case of non-differential potentials and $s\in(1/4,1)$, we also derive a quantitative estimate mimicking the classical result by Bourgain and Kenig.   
\end{abstract}

\maketitle

\section{Introduction}
\label{sec:intro}

In this work, we study a Landis-type conjecture for the fractional Schr\"odinger equation, 
\begin{equation}
\label{eq:eq_main}
((-\D)^s + q) u = 0\;\mbox{ in }\; \R^n
\end{equation}
with $s\in (0,1)$ and 
\begin{equation}\label{qbound}
|q(x)|\le 1.
\end{equation}
Roughly speaking, we are interested in the maximal vanishing rate of solutions to this equation at infinity. For $s=1$, in Section 3.5 in \cite{KL88a} V.A. Kondratev and E.M. Landis conjectured that if $|q(x)|\le 1$ and $|u(x)|\le C_0$ satisfies $|u(x)|\le \exp(-C|x|^{1+})$, then $u\equiv 0$. The Landis conjecture was disproved by Meshkov \cite{M91}, who constructed a potential $q$ and a nontrivial $u$ with $|u(x)|\le C\exp(-C|x|^{\frac 43})$. He also showed that if $|u(x)|\le C\exp(-C|x|^{\frac 43+})$, then $u\equiv 0$. In their seminal work, Bourgain and Kenig \cite{BK05} derived a quantitative form of Meshkov's result in their resolution of Anderson localization for the Bernoulli model in higher dimensions. It should be pointed out that in Meshkov's counterexample both $q$ and $u$ are \emph{complex}-valued functions. In other words, the exponent $4/3$ is optimal in the complex case (which corresponds to the situation of systems). The proof in \cite{BK05} is based on the Carleman method. In the spirit of the Carleman method, several extensions have been made in \cite{CS99,Davey14,DZ17,DZ18,LW14}, which also take singular drift coefficients and potentials into account.  

In view of Meshkov's counterexample, Kenig \cite{K05-06} refined the Landis conjecture and asked whether this conjecture is true for \emph{real}-valued potentials and solutions. In 2005, Kenig, Silvestre and the second author \cite{KSW15} confirmed the Landis conjecture (in a quantitative form) when $n=2$ and $q\ge 0$.  This result was later extended to the more general situation with $\D$ being replaced by any second order elliptic operator \cite{DKW17}. In the very recent preprint \cite{DKW18}, this is further improved by also allowing for (exponentially) small negative contributions in the potential. The Landis conjecture in the real case with $n=1$ was recently studied by Rossi \cite{Rossi18}. 

The main theme of this paper is to investigate a Landis-type conjecture for fractional Schr\"odinger equations. We will consider both qualitative and quantitative estimates when the potentials are either differentiable or simply bounded. Similar to the original Landis conjecture (for the Laplace operator), we are concerned with the maximal decay rate of non-trivial solutions. We are especially interested in understanding how the decay rate depends on the fractional power $s \in (0,1)$.  Detailed statements of our results are described below.



\subsection{Qualitative estimates}

We first discuss the qualitative behaviour of solutions to \eqref{eq:eq_main}.
Here we will show that if the potential $q$ satisfies an additional regularity estimate, then independently of the value of $s\in (0,1)$ there are no super-exponentially decaying solutions to \eqref{eq:eq_main}.

\begin{thm}
\label{thm:Landis_Diff}
Let $s\in (0,1)$ and assume that $u\in H^s(\R^n)$ is a solution to \eqref{eq:eq_main} such that $q$ satisfies $q\in C^1(\R^n)$, \eqref{qbound}, and in addition
\begin{equation}\label{grad}
|x\cdot \nabla q(x)|\leq 1
\end{equation}
holds. Suppose that $u$ further satisfies the following decay behaviour: there exists $\alpha >1 $ such that
\begin{align}
\label{eq:decay_1}
\int\limits_{\R^n} e^{|x|^{\alpha}} |u|^2 dx \leq C < \infty.
\end{align}
Then $u \equiv 0$.
\end{thm}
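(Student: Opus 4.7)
The plan is to localize the nonlocal equation via the Caffarelli--Silvestre extension and then run a Carleman-type argument on the resulting degenerate elliptic problem in the upper half-space. Writing $(x,y) \in \R^n \times \R_+$, let $\tilde u$ denote the Caffarelli--Silvestre extension of $u$, which satisfies
\begin{equation*}
\nabla \cdot ( y^{1-2s} \nabla \tilde u ) = 0 \; \text{in}\; \R^{n+1}_+, \qquad \tilde u(x,0) = u(x),
\end{equation*}
together with the Neumann-type condition $-c_s \lim_{y\to 0^+} y^{1-2s} \p_y \tilde u = q(x) u(x)$. In this way the PDE \eqref{eq:eq_main} becomes a degenerate elliptic equation in divergence form, and the potential appears only as a boundary forcing term.

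First I would propagate the decay hypothesis \eqref{eq:decay_1} from the trace $u$ to the full extension $\tilde u$. Using the explicit Poisson kernel of the extension together with interior Caccioppoli estimates for the degenerate operator, one obtains a super-exponential $L^2$-bound of the form
\begin{equation*}
\int\limits_{B_1^+(x_0,0)} y^{1-2s} |\tilde u|^2 \, dx\, dy \leq C e^{-|x_0|^{\alpha'}}
\end{equation*}
for all $|x_0| \geq 1$ and some $\alpha' \in (1,\alpha)$, where $B_1^+$ is a half-ball. This step is essentially routine but must be executed with some care because of the $A_2$ weight $y^{1-2s}$.

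Next I would apply a Carleman estimate for $\nabla\cdot(y^{1-2s}\nabla \cdot)$ with a radial convex weight of the form $\phi(x,y) = h(|x|^2 + y^2)$ with $h$ growing like $r^{\beta}$ for some $\beta \in (1,\alpha')$. After conjugation the usual commutator computation gives a positive symbol, and by the pseudoconvexity of such radial weights one obtains a bulk estimate
\begin{equation*}
\tau \int y^{1-2s} e^{2\tau\phi} |\tilde v|^2 \, dx\, dy \leq C \int y^{1-2s} e^{2\tau\phi}|\nabla\cdot(y^{1-2s}\nabla \tilde v)/y^{1-2s}|^2 \, dx\, dy + \text{boundary terms},
\end{equation*}
valid for compactly supported $\tilde v$ and large $\tau$. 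The boundary contribution on $\{y=0\}$ carries the potential: here the hypothesis $|q|\leq 1$ together with the scale-invariant derivative bound $|x\cdot \nabla q|\leq 1$ is exactly what is needed to integrate by parts in $x$ and absorb the boundary error terms into the principal part, since $\nabla_x \phi$ is parallel to $x$ for the radial weight. Applying this estimate to a cutoff of $\tilde u$ and letting the cutoff exhaust $\R^{n+1}_+$ produces an inequality whose left-hand side would grow super-exponentially in $\tau$ if $u \not\equiv 0$, but whose right-hand side is controlled by \eqref{eq:decay_1}; this forces $u \equiv 0$.

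The main obstacle I anticipate is the Carleman estimate itself. There are two coupled difficulties: the degeneracy of the weight $y^{1-2s}$ at the boundary, which forces the use of weighted Sobolev spaces and careful commutator analysis; and the treatment of the boundary trace term $\int q u^2 \, e^{2\tau\phi}\, dx$ coming from the Neumann condition. The condition \eqref{grad} is precisely what allows this trace term to be handled by a Rellich/Pohozaev-type identity with the generator of dilations, which matches the radial structure of $\phi$. Once the Carleman estimate with this boundary treatment is in place, the deduction of $u\equiv 0$ from \eqref{eq:decay_1} is a standard argument letting $\tau \to \infty$.
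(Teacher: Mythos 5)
Your overall architecture coincides with the paper's: Caffarelli--Silvestre extension, transfer of the boundary decay to bulk decay, and a Carleman inequality with radial weight $|x|^{\beta}$, $1<\beta<\alpha$, in which \eqref{grad} is used to handle the boundary term coming from the Neumann condition. However, your Step 1 as justified contains a genuine gap. The Caffarelli--Silvestre Poisson kernel behaves like $y^{2s}(|x|^2+y^2)^{-\frac{n+2s}{2}}$ and thus has only power-law tails: for a point $(x_0,y)$ with $|x_0|$ large, the convolution picks up a contribution of order $y^{2s}|x_0|^{-(n+2s)}\|u\|_{L^1}$ from the region near the origin where $u$ is of size one. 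Hence super-exponential decay of the trace does \emph{not}, by the kernel representation plus Caccioppoli alone, yield super-exponential decay of $\tilde u$ in half-balls $B_1^+(x_0,0)$; for a generic rapidly decaying datum the bound you assert is simply false. The bulk decay is true here only because $\tilde u$ in addition satisfies $c_{n,s}\lim_{x_{n+1}\to 0}x_{n+1}^{1-2s}\p_{n+1}\tilde u=qu$, whose right-hand side is itself super-exponentially small on far-away boundary balls; the paper exploits precisely this through quantitative propagation of smallness (the boundary--bulk interpolation estimates of Propositions \ref{prop:small_prop_int} and \ref{prop:small_prop_bound} combined with a chain-of-balls argument in Proposition \ref{prop:boundary_bulk}). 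Without such an argument your first displayed inequality is unproven, and the subsequent limit in the cutoff (which needs the bulk decay to beat the Carleman weight $e^{\tau|x|^{\beta}}$ on the outer annulus) has nothing to stand on.

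A second, smaller gap concerns the boundary term in the Carleman estimate itself. Integrating by parts along the dilation field (your Rellich/Pohozaev step) does use \eqref{grad} and matches the paper's computation, but it only converts the term involving the radial derivative of the solution into boundary terms of the form $\tau\,\|e^{\tau\phi}|x|^{\frac{\beta}{2}}|q|^{\frac{1}{2}}w\|_{L^2(\R^n\times\{0\})}^2$ and $\tau\,\|e^{\tau\phi}|x|^{\frac{\beta}{2}}|x\cdot\nabla q|^{\frac{1}{2}}w\|_{L^2(\R^n\times\{0\})}^2$. These cannot be ``absorbed into the principal part'': the left-hand side of the Carleman inequality carries no boundary term of size $\tau$ with a favourable sign, and pseudoconvexity of the radial weight does not produce one. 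The paper absorbs these terms into the \emph{bulk} via the trace/interpolation inequality on the half-sphere (Proposition \ref{prop:interpol}) with the radius-dependent parameter $\tilde\tau=\tau e^{t}$, which turns them into bulk contributions of order $\tau^{3-2s}$ and $\tau^{1-2s}$, strictly below the Carleman powers $\tau^{3}$ and $\tau$ for every $s\in(0,1)$ (this is also exactly why only $\beta>1$ is needed and why the result is uniform in $s$). You would need this, or an equivalent boundary-to-bulk mechanism, together with the final appeal to weak unique continuation after deducing vanishing on an annulus, to close the argument.
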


For $s=1$, a similar qualitative estimate as in Theorem~\ref{thm:Landis_Diff} with a differentiable potential satisfying \eqref{qbound} and \eqref{grad} was proved by Meshkov \cite{M92}. Without the additional regularity result on $q$, it is still possible to prove a qualitative decay result. However, as our argument for this does not distinguish between the real and complex situation, the obtained decay deteriorates.

\begin{thm}
\label{thm:Landis_No_Diff}
Let $s\in (1/4,1)$ and assume that $u\in H^{s}(\R^n,\C)$ is a  solution to \eqref{eq:eq_main}. Suppose that \eqref{qbound} holds and $u$ further satisfies the following decay behaviour: there exists $\alpha> \frac{4s}{4s-1}$ such that 
\begin{align}
\label{eq:decay_4_3}
\int\limits_{\R^n} e^{|x|^{\alpha}} |u|^2 dx \leq C < \infty.
\end{align}
Then $u \equiv 0$.
\end{thm}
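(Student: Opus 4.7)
The plan is to reduce the nonlocal equation \eqref{eq:eq_main} to a local degenerate elliptic problem via the Caffarelli--Silvestre extension, and then to establish a Carleman estimate whose constants balance so as to pin down the threshold $\alpha_\ast = 4s/(4s-1)$. Writing $\tilde{u}$ for the extension of $u$ to the upper half-space $\R^{n+1}_+ = \R^n \times (0,\infty)$, one has
\begin{equation*}
\nabla \cdot (y^{1-2s} \nabla \tilde{u}) = 0 \text{ in } \R^{n+1}_+, \qquad -c_s \lim_{y \to 0^+} y^{1-2s} \partial_y \tilde{u} = q u \text{ on } \R^n,
\end{equation*}
and since $|q| \leq 1$, the Neumann-type trace is controlled pointwise by $|u|$. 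In this geometry the super-exponential decay of $u$ on $\{y=0\}$ propagates into a corresponding decay of $\tilde{u}$ in the bulk via standard Caccioppoli-type arguments for the extension.

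Next I would establish a Carleman estimate for the operator $L := \nabla \cdot (y^{1-2s} \nabla)$ with a radial conjugation $e^{\tau \phi(r)}$, $r = |(x,y)|$, where $\phi$ is a radial profile tuned to the decay rate in \eqref{eq:decay_4_3}. The target inequality has the schematic form
\begin{equation*}
\tau^3 \int_{\R^{n+1}_+} \rho(r) e^{2\tau \phi} |\tilde{u}|^2 y^{1-2s}\, dxdy + \tau \int_{\R^n} e^{2\tau \phi(|x|)} |\tilde{u}|^2\, dx \leq C \int_{\R^{n+1}_+} e^{2\tau \phi} |L\tilde{u}|^2 y^{2s-1}\, dxdy + C \int_{\R^n} e^{2\tau \phi(|x|)} |qu|^2\, dx,
\end{equation*}
the boundary integral on the right arising from integrating by parts against the Neumann data. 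A crucial ingredient is a sharp weighted trace inequality for the Muckenhoupt weight $y^{1-2s}$, which transfers a portion of the bulk control into an $L^2$ bound on the slice $\{y=0\}$, so that $C \int e^{2\tau\phi(|x|)}|qu|^2\, dx$ can be absorbed into the boundary piece of the left-hand side provided $\tau$ dominates the induced trace constant.

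With such an estimate in hand, I would apply it to $\chi_R \tilde{u}$, where $\chi_R$ localises to a ball of radius $R$. Since $L\tilde{u} = 0$, the bulk right-hand side is supported only in an annulus around $r \sim R$ and is dominated by the hypothesis \eqref{eq:decay_4_3}. The absorption of the boundary term forces the scaling $\tau \sim R^{\alpha_\ast - 1}$ with $\alpha_\ast = 4s/(4s-1)$ as the critical exponent: this is where the fractional power $s$ enters, since the trace inequality for $y^{1-2s}$ costs precisely $2s$ derivatives when converting bulk to boundary regularity. Sending $R \to \infty$ with the optimal $\tau = \tau(R)$, the super-exponential decay $\alpha > \alpha_\ast$ kills both the annular bulk error and the absorbed boundary term, so the left-hand side forces $\tilde{u} \equiv 0$ in every ball and hence $u \equiv 0$.

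The main obstacle will be to derive the Carleman inequality with the sharp powers of $\tau$ and $r$ and, more delicately, to carry out the absorption of the boundary integral via a trace inequality for the degenerate weight $y^{1-2s}$ with constants of the correct order in $\tau$. It is exactly this trade-off between bulk exponential decay and trace-level control that produces the factor $4s/(4s-1)$; the restriction $s > 1/4$ corresponds to the range in which this optimisation yields a finite, nontrivial exponent, and as $s \to 1$ one recovers Meshkov's exponent $4/3$. The absence of the $C^1$ hypothesis of Theorem \ref{thm:Landis_Diff} prevents any commutator trick that would allow $q$ to interact favourably with $(-\Delta)^s$, which is why the purely bounded case must pay the additional factor reflected in $\alpha_\ast$.
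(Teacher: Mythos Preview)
Your overall architecture matches the paper's: pass to the Caffarelli--Silvestre extension, prove a Carleman estimate with weight $\phi(x)=|x|^\beta$, absorb the Neumann boundary term via a trace/interpolation inequality for the Muckenhoupt weight $x_{n+1}^{1-2s}$ (this is exactly Proposition~\ref{prop:interpol}, and the scaling there is what produces $4s/(4s-1)$ and the restriction $s>1/4$), and then run a cutoff-and-limit argument. So the strategy is right.

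There are, however, two genuine imprecisions in your execution that would cause trouble. First, your limiting procedure is not the one that works. You describe coupling $\tau\sim R^{\alpha_\ast-1}$ and sending both to infinity together, with the decay hypothesis killing the boundary term. That is not the mechanism: one fixes $\beta\in(\tfrac{4s}{4s-1},\alpha)$, sends $R\to\infty$ first (the outer annular error dies because $\beta<\alpha$, for \emph{every} fixed $\tau$), and then the boundary term is absorbed into the left side for all $\tau$ large, independently of $R$ and independently of the decay of $u$---the absorption comes purely from $\beta>4s/(4s-1)$ and $s>1/4$ via the interpolation inequality. The decay hypothesis is used only for the outer bulk error, never for the boundary. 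Second, you omit the \emph{inner} cutoff: the Carleman estimate of Theorem~\ref{thm:Carl_No_Diff} requires support in $\R^{n+1}_+\setminus B_1^+$, so one must also cut near the origin, producing a residual error on $A_{1,2}^+$. The final step is then to compare $e^{\tau\psi(4)}$ against $e^{\tau\psi(2)}$ as $\tau\to\infty$, forcing $\tilde u=0$ on an annulus, and conclude by weak unique continuation---not by making the entire left side vanish directly.

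A smaller point: the propagation of boundary decay into the bulk is not just Caccioppoli; the paper needs the boundary--bulk three-balls machinery of Proposition~\ref{prop:boundary_bulk} (which in turn rests on Proposition~\ref{prop:small_prop_bound}) to get pointwise control of $\tilde u$ on annuli $A^+_{R,2R}$.
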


We emphasize that as $s\rightarrow 1$ in both of our main results, the identified critical decay exponents correspond to the ones from the case $s=1$. Moreover, in the first result, Theorem \ref{thm:Landis_Diff}, the critical decay rate does \emph{not} depend on the value of $s\in(0,1)$. It is thus natural to ask whether the derived decay exponents are optimal or rather an artifact of our argument.
Let us comment on this. For \emph{real-valued problems} (i.e. scalar equations), we expect that the exponential decay (independent of the value of $s\in (0,1)$) as the critical decay behaviour is sharp. Indeed, as in \cite{KSW15} and \cite{BK05} it is possible to relate the decay behaviour at infinity to the local maximal vanishing rate at zero (if growth conditions are assumed, which are necessary due to the global character of the problem). Analogous arguments as in the classical case $s=1$ would lead to the conjecture that when considering 
\begin{itemize}
\item the equation \eqref{eq:eq_main} with potentials of the size $|q(x)|\leq M$ (instead of $|q(x)|\leq 1$), 
\item and solutions $u(x)$ which satisfy the growth bounds $\|u\|_{L^{\infty}(\R^n)}\leq C_0$ and $\|u\|_{L^{\infty}(B_1)}\geq 1$,
\end{itemize}
a lower bound of the form
\begin{equation}\label{bound1}
\|u\|_{L^{\infty}(B_r)} \geq r^{C M^{\frac{1}{2s}}},
\end{equation}
holds. Here $C=C(C_0)>0$ and $r\in (0,r_0)$ for some sufficiently 
small constant $r_0>0$. Results of this flavour have been 
proved for eigenfunctions or equations with differentiable potentials (with dependences on the $C^1$ norm of the potentials, that is, $M$ in \eqref{bound1} is the size of $\|q\|_{C^1(\R^n)}$) in \cite{R17, Z15} (on compact manifolds or bounded domains, respectively). 
For the spectral fractional Laplacian and its eigenfunctions on compact manifolds these dependences are indeed immediate consequences from the corresponding ones of the Laplacian.

We recall that $\nabla q$ satisfies \eqref{grad}. Taking this expected quantitative maximal rate of vanishing \eqref{bound1} for granted and relying on a scaling argument as in \cite{BK05, KSW15}, i.e. considering solutions $u_{R,x_0}(x)$ for $x_0 \in B_{R}\setminus B_{R/2}$ of the correspondingly rescaled version of \eqref{eq:eq_main}, then 
suggest global lower bounds of the type
\begin{align*}
\inf\limits_{|z_0|=R} \sup\limits_{|z-z_0|<1} |u(z)| \geq \exp(-C R \log(R)) \mbox{ for } R \gg 1,
\end{align*}
where $C=C(C_0)>0$. Hence, this strongly suggests that for the class of potentials under consideration, Theorem \ref{thm:Landis_Diff} is essentially sharp (possibly up to logarithmic contributions). However, we further believe that as in \cite{KSW15}, at least under sign conditions on the potential and in one dimension (which on the level of the Cafferelli-Silvestre extension corresponds to the two-dimensional setting from \cite{KSW15} in which complex analysis tools are available), it might be possible to reduce the necessary \emph{regularity} for $q$ to $L^{\infty}$ regularity.

In view of Meshkov's example \cite{M91}, in the case of Theorem \ref{thm:Landis_No_Diff}, at least the growth behaviour for $s\rightarrow 1$ is expected to be optimal. As in the case $s=1$, the Carleman estimate of Theorem \ref{thm:Carl_No_Diff} which lies at the core of the argument for Theorem \ref{thm:Landis_No_Diff} is of perturbative character. It hence does \emph{not} distinguish between the real vs the complex, i.e. the scalar vs the systems cases. It would be interesting to investigate whether also for $s\in (1/4,1)$ there are Meshkov-type examples saturating the proposed exponents from Theorem \ref{thm:Landis_No_Diff}. We remark that the restriction to $s\in (1/4,1)$ seems necessary as long as we only consider radial weight functions in our Carleman estimates (due to the subelliptic nature of these estimates). We seek to prove the results of Theorems \ref{thm:Landis_Diff} and \ref{thm:Landis_No_Diff} by combining elliptic estimates with Carleman estimates.

\subsection{Quantitative estimates}
In this subsection, we present a quantitative version of Theorem \ref{thm:Landis_No_Diff}. Note that here $q\in L^\infty(\R^n)$ and that \eqref{qbound} is satisfied. 

\begin{thm}
\label{thm:quant}
Let $s\in (1/4,1)$ and assume that $u\in H^{s}(\R^n,\C)$ is a solution to \eqref{eq:eq_main}. Suppose that $u$ further satisfies $\|u\|_{L^2(\{|x|<1\})}=1$
and that there exists a constant $C_0>0$ such that
\[
\|u\|_{L^{\infty}(\R^n)}\leq C_0.
\]
Then there exists a constant $C=C(n,s,C_0)>0$ such that for $R>0$ large
\begin{align*}
\inf\limits_{|x_0|=R} \|u\|_{L^{{\infty}}(\{|x-x_0|<1\})}\geq Ce^{-C R^{\frac{4s}{4s-1}}\log R} .
\end{align*}
\end{thm}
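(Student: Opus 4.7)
The plan is to combine the Carleman estimate Theorem~\ref{thm:Carl_No_Diff} for the Caffarelli--Silvestre extension of $u$ with a chain-of-balls propagation-of-smallness argument in the spirit of Bourgain--Kenig~\cite{BK05}. First I would work with the extension $\tilde u$ on $\R^{n+1}_+$ satisfying $\di(y^{1-2s}\nabla\tilde u)=0$ with the Neumann-type boundary condition $\lim_{y\to 0^+} y^{1-2s}\partial_y\tilde u = c_s q u$, and use the Poisson representation together with the global $L^\infty$-bound on $u$ to obtain pointwise control on $\tilde u$ throughout $\R^{n+1}_+$.

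For fixed $x_0$ with $|x_0|=R$ large, I would rescale $\tilde u$ around $(x_0,0)$ at a small scale $\rho$, setting $v(x,y)=\tilde u(x_0+\rho x,\rho y)$. The bulk equation is invariant under this scaling, while the boundary condition picks up a factor $\rho^{2s}$, so the effective boundary potential has size $\rho^{2s}$. Applying Theorem~\ref{thm:Carl_No_Diff} to $v$ after insertion of a radial cutoff and with Carleman parameter $\tau$, and absorbing the boundary-potential contribution into the subelliptic gain (which requires $\tau$ to exceed a threshold $\tau_0$ depending polynomially on $\rho^{2s}$, and is also the origin of the restriction $s>1/4$), I obtain a three-ball inequality of the form
\begin{equation*}
\|v\|_{L^2(\tilde B_{r_1})} \le e^{C\tau}\, \|v\|_{L^2(\tilde B_{r_0})}^{\theta}\, \|v\|_{L^2(\tilde B_{r_2})}^{1-\theta},
\end{equation*}
valid for $\tau\ge \tau_0$, with fixed radii $0<r_0<r_1<r_2$ and $\theta \in (0,1)$ depending on $s$, where $\tilde B_r$ denotes a (half-)ball in $\R^{n+1}_+$.

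Finally, I would iterate this three-ball inequality along a chain of $N\sim R/\rho$ balls connecting $B_1$ (where the normalization $\|u\|_{L^2(B_1)}=1$ provides the initial lower bound, transferred to $\tilde u$ by trace-type estimates) to a unit neighborhood of $x_0$, combining it at each step with the global pointwise bound on $\tilde u$ to control the outer term. Composing the inequalities and telescoping yields a lower bound of the form $\exp(-CN\tau_0)=\exp(-C(R/\rho)\tau_0(\rho^{2s}))$ for the mass of $\tilde u$ near $(x_0,0)$. A standard Caccioppoli/Moser iteration for the degenerate elliptic extension then upgrades this $L^2$ bound to the claimed $L^\infty$ lower bound on $u$. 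The main obstacle will be the quantitative balance: optimizing the chain length $R/\rho$ against the Carleman threshold $\tau_0(\rho^{2s})$ (itself governed by the subelliptic loss in Theorem~\ref{thm:Carl_No_Diff}) is what must produce exactly the exponent $R^{4s/(4s-1)}\log R$, and tracking this balance with $\rho$-explicit constants through the iteration is the technical heart of the argument.
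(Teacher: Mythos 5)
There is a genuine gap at the final ``telescoping'' step, and it is fatal to the quantitative claim. A chain of three-ball inequalities of the form $\|v\|_{L^2(\tilde B_{r_1})}\le e^{C\tau}\|v\|^{\theta}_{L^2(\tilde B_{r_0})}\|v\|^{1-\theta}_{L^2(\tilde B_{r_2})}$, iterated along $N\sim R/\rho$ overlapping balls with the outer term controlled only by the global $L^\infty$ bound, does \emph{not} compose additively in the exponent: writing $m_j$ for the local norms one gets $\log m_{j+1}\le C+\theta\log m_j$, hence after $N$ steps the exponent is multiplied by $\theta^{N}$ and the resulting lower bound near $x_0$ is of the order $\exp(-C\theta^{-N})$, i.e.\ \emph{doubly} exponential in $R/\rho$ --- far weaker than $\exp(-CR^{\frac{4s}{4s-1}}\log R)$. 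Conversely, the bound $\exp(-CN\tau_0(\rho^{2s}))$ you claim cannot be right either: since your rescaling around $x_0$ at scale $\rho\le 1$ makes the boundary potential \emph{small} ($\rho^{2s}\le 1$), the absorption threshold $\tau_0$ is $O(1)$, and $N\sim R/\rho$ would give $\exp(-CR)$, which is \emph{stronger} than the theorem and, as $s\to1$, would contradict Meshkov's counterexample \cite{M91}. In short, the architecture of the argument (spatial chain of fixed-scale three-ball inequalities with a small rescaled potential) cannot produce the exponent $\frac{4s}{4s-1}$; the scale-down rescaling discards exactly the quantitative information from which that exponent arises.

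The paper's route is the Bourgain--Kenig one \cite{BK05}, but iterated in \emph{scales}, not in space: (i) a quantitative three-ball inequality (Lemma~\ref{lem:quantitative_3balls}) and a doubling inequality (Proposition~\ref{prop:doubling}) with explicitly tracked constant $e^{C(1+\|q\|_{L^\infty}^{2/(4s-1)})}$, proved not with Theorem~\ref{thm:Carl_No_Diff} (whose weight $|x|^{\alpha}$ and support condition outside $B_1^+$ are tailored to the qualitative result at infinity) but with the logarithmic-weight Carleman estimate \eqref{eq:Carl_dec} from \cite{Rue15,GRSU18}, the restriction $s>1/4$ entering, as you correctly guessed, through the absorption $\tau\gtrsim\|q\|_{L^\infty}^{2/(4s-1)}$; (ii) iterating the doubling inequality over the $\sim\log(1/r)$ dyadic scales gives the vanishing-order bound $\|u\|_{L^\infty(B_r')}\gtrsim r^{C(1+\|q\|_{L^\infty}^{2/(4s-1)})}r^{-\tilde\gamma\log K}$ (Corollary~\ref{cor:order_of_van}); (iii) one then rescales by the \emph{large} factor $R$, so that $B_1(x_0)$ becomes a ball of radius $1/R$ and the potential becomes $q_R$ with $\|q_R\|_{L^\infty}\le R^{2s}$, and applies (ii) with $r=1/R$, $K=R^{-n/2}$. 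The exponent appears precisely as $(R^{2s})^{2/(4s-1)}=R^{\frac{4s}{4s-1}}$ combined with $\log(1/r)=\log R$. If you want to salvage your write-up, replace the spatial chain by this doubling-in-scales mechanism and keep the $\|q\|_{L^\infty}$-dependence of all constants explicit; that dependence, not the chain length, is what carries the theorem.
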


We remark that by virtue of the ellipticity of the problem, it does not really matter in which topology one works for the lower bounds. For instance, it would have equally been possible to derive similar results under the assumption that $|u(0)|=1$.

This result is similar in flavour to the decay estimates in \cite{BK05}. However both the arguments in the qualitative and the quantitative settings involve new intricacies and technical challenges due to the nonlocal character of the equations at hand. Since lower bound estimates for nonlocal equations pose serious difficulties, as in various other works on (quantitative) unique continuation \cite{FF14, FF15, BG17,G17,Rue15,R17,Rue17,RS17}, we opt for working with the Caffarelli-Silvestre extension \cite{CS07}, c.f. \eqref{eq:CS}, \eqref{eq:CS_eq_main} in Section \ref{sec:aux}, instead of dealing with the nonlocal equation \eqref{eq:eq_main} directly. This allows us to investigate a local (degenerate) elliptic equation. It however comes at the expense of having to study this equation in $n+1$ instead of $n$ dimensions. In the additional dimension, the control on the solution can hence only be derived through the equation. This implies that we always have to transfer information from the boundary to the bulk and vice versa. In the qualitative estimates, we for instance have to show that (exponential) decay on the boundary implies (exponential) decay in the bulk. Similarly, we have to transfer upper and lower bounds in the quantitative results from the boundary into the bulk and vice versa. This poses non-trivial challenges, which however are overcome by an ingredient which was used in \cite{RS17}:
As one of our key tools which allows us to switch between the bulk and the boundary we rely on a boundary-bulk interpolation inequality, c.f. Proposition \ref{prop:small_prop_bound}.

\subsection{Organization of the article}
This paper is organized as follows. In Section~\ref{sec:aux}, we will collect several preliminary results that are needed in the proofs of the main theorems. We then derive Carleman estimates for the fractional Laplacian $(-\D)^s$ with differentiable or non-differentiable potentials in Section~\ref{eq:Carl}. The proofs of the qualitative estimates, Theorem~\ref{thm:Landis_Diff} and \ref{thm:Landis_No_Diff}, are given in Section~\ref{sec4}.  Finally, in Section~\ref{sec5}, we will prove Theorem \ref{thm:quant}.

\section{Auxiliary Results}
\label{sec:aux}

\subsection{The Cafferelli-Silvestre extension}
\label{sec:Caff_Silv}

In the sequel, it will be convenient to localize the problem at hand. This will be achieved by means of the Caffarelli-Silvestre extension \cite{CS07} which allows us to address the problem at hand by relying on tools for \emph{local} equations. To this end, for $s\in (0,1)$ and $u\in H^{s}(\R^n)$ we consider a solution $\tilde{u}\in \dot{H}^{1}(\R^{n+1}_+, x_{n+1}^{1-2s}):=\{v: \R^{n+1}_+ \rightarrow \R: \ \int\limits_{\R^{n+1}_+} x_{n+1}^{1-2s} |\nabla v|^2 dx \leq C < \infty \}$ of the degenerate elliptic equation (whose weight however still is in the Muckenhoupt class),
\begin{align}\label{eq:CS}
\begin{split}
\nabla \cdot x_{n+1}^{1-2s} \nabla \tilde{u} & = 0 \mbox{ in } \R^{n+1}_+,\\
\tilde{u} & = u \mbox{ on } \R^{n} \times \{0\}.
\end{split}
\end{align}
We recall that by the observations in \cite{CS07},
\begin{align*}
(-\D)^s u (x) = c_{n,s}\lim\limits_{x_{n+1} \rightarrow 0} x_{n+1}^{1-2s} \p_{n+1} \tilde{u}(x)
\end{align*}
for some constant $c_{n,s}\neq 0$.
In particular, the equation \eqref{eq:eq_main} can be reformulated as the local, degenerate elliptic equation
\begin{align}
\label{eq:CS_eq_main}
\begin{split}
\nabla \cdot x_{n+1}^{1-2s} \nabla \tilde{u} & = 0 \mbox{ in } \R^{n+1}_+,\\
\tilde{u} & = u \mbox{ on } \R^n \times \{0\},\\
c_{n,s}\lim\limits_{x_{n+1} \rightarrow 0} x_{n+1}^{1-2s}\p_{n+1}\tilde{u} & = q u \mbox{ on } \R^{n} \times \{0\}.
\end{split}
\end{align}
This however comes at the expense of adding a new variable, in which we have to infer control by exploiting the equation. 

When dealing with this equation, it will be convenient to also introduce the following notations for the underlying domains and the related weighted function spaces. For $\Omega \subset \R^{n+1}_+$, $x_0 \in \R^{n+1}_+$, $r,R>0$, we denote
\begin{align*}
B_{r}^+(x_0)&:= \{x\in \R^{n+1}_+: \ |x-x_0|\leq r\}, \
B_{r}'(x_0):=\{x\in \R^{n}\times \{0\}: \ |x-x_0|\leq r\},\\ 
B_r^+&:=B_r^+(0), \ B_r':= B_r'(0),\\
A_{r,R}^+&:=\{x\in \R^{n+1}_+: \  r\leq |x|\leq R\}, \ 
A_{r,R}':= \{x\in \R^n \times \{0\}: \ r\leq |x|\leq R \},\\
H^{1}(\Omega,x_{n+1}^{1-2s})
&:= \{v: \Omega \cap \R^{n+1}_+ \rightarrow \R: \ \int\limits_{\Omega \cap \R^{n+1}_+} x_{n+1}^{1-2s} (|v|^2 + |\nabla v|^2) dx \leq C < \infty \}, \\ 
H^{1}(S^{n}_+, \theta_n^{1-2s})&:= \{v: S^{n}_+ \rightarrow \R: \ \int\limits_{S^{n}_+} \theta_{n}^{1-2s} (|v|^2 + |\nabla_{S^n_+} v|^2) d\theta \leq C < \infty \}, \ \theta_n=\frac{x_{n+1}}{|x|}.
\end{align*}

As an important elliptic estimate which we will be using frequently we recall Caccioppoli's inequality.

\begin{lem}
\label{lem:Cacc}
Let $s\in (0,1)$ and $\tilde{u}\in H^{1}(B_{4r}^+,x_{n+1}^{1-2s})$ be a solution to \eqref{eq:CS}. Then, there exists $C=C(n,s)>0$ such that
\begin{align*}
\|x_{n+1}^{\frac{1-2s}{2}} \nabla \tilde{u}\|_{L^2(B_{r}^+)}
\leq C\left( r^{-1}\|x_{n+1}^{\frac{1-2s}{2}} \tilde{u}\|_{L^2(B_{2r}^+)} + \|u\|_{L^2(B_{2r}')}^{\frac{1}{2}} \|\lim\limits_{x_{n+1}\rightarrow 0} x_{n+1}^{1-2s} \p_{n+1}\tilde u\|_{L^2(B_{2r}')}^{\frac{1}{2}} \right).
\end{align*}
\end{lem}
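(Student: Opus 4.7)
The plan is to establish Caccioppoli's inequality by the standard energy method, namely by testing the bulk equation in \eqref{eq:CS} against $\eta^2 \tilde u$ for a smooth cutoff adapted to the annulus $B_{2r}^+\setminus B_r^+$. Concretely, I would pick $\eta\in C_c^\infty(\R^{n+1})$ with $0\le\eta\le 1$, $\eta\equiv 1$ on $B_r$, $\supp\eta\subset B_{2r}$, and $|\nabla\eta|\le C/r$.

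Multiplying $\nabla\cdot(x_{n+1}^{1-2s}\nabla\tilde u)=0$ by $\eta^2\tilde u$, integrating over $\R^{n+1}_+$, and applying the product rule together with the divergence theorem (using that the outward normal to $\R^{n+1}_+$ at $\{x_{n+1}=0\}$ is $-e_{n+1}$) yields
\begin{equation*}
\int_{B_{2r}^+} x_{n+1}^{1-2s}\eta^2 |\nabla\tilde u|^2\,dx = -2\int_{B_{2r}^+} x_{n+1}^{1-2s}\eta\,\tilde u\,\nabla\eta\cdot\nabla\tilde u\,dx - \int_{B_{2r}'}\eta^2 u\Big(\lim_{x_{n+1}\to 0} x_{n+1}^{1-2s}\p_{n+1}\tilde u\Big)dx'.
\end{equation*}
I would then apply Young's inequality $2|\eta\tilde u\,\nabla\eta\cdot\nabla\tilde u|\le \tfrac12\eta^2|\nabla\tilde u|^2 + 2|\nabla\eta|^2|\tilde u|^2$ to absorb half of the gradient term into the LHS, and use Cauchy--Schwarz on the boundary integral to bound it by $\|u\|_{L^2(B_{2r}')}\cdot\|\lim_{x_{n+1}\to 0}x_{n+1}^{1-2s}\p_{n+1}\tilde u\|_{L^2(B_{2r}')}$. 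Inserting $|\nabla\eta|\le C/r$ and using that $\eta\equiv 1$ on $B_r^+$, rearranging, and taking square roots (with $\sqrt{a+b}\le\sqrt a+\sqrt b$) produces the stated inequality.

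The main obstacle is making the integration by parts at the degenerate boundary $\{x_{n+1}=0\}$ rigorous, since the Muckenhoupt weight $x_{n+1}^{1-2s}$ either vanishes ($s<1/2$) or blows up ($s>1/2$) there. This is handled by first integrating over the truncated half-space $\{x_{n+1}>\varepsilon\}$, where the boundary contribution is the genuine normal flux at height $\varepsilon$, and then letting $\varepsilon\to 0^+$. One invokes that $\tilde u\in H^1(B_{2r}^+,x_{n+1}^{1-2s})$ admits a well-defined trace on the flat boundary, together with the weighted co-normal trace interpretation of $\lim_{x_{n+1}\to 0} x_{n+1}^{1-2s}\p_{n+1}\tilde u$ in the natural weighted energy space. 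Note that only membership in the energy space and the bulk equation \eqref{eq:CS} are required; no a priori knowledge of the Neumann data is used.
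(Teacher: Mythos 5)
Your proposal is correct and follows essentially the same route as the paper, which simply invokes the standard cutoff/energy argument of Lemma 4.5 in \cite{RS17}: test \eqref{eq:CS} with $\eta^2\tilde u$, absorb the bulk cross term via Young's inequality, and estimate the boundary pairing by Cauchy--Schwarz in $L^2$--$L^2$ (the very modification the paper points out relative to the $H^{s}$--$H^{-s}$ estimate in \cite{RS17}). Your remarks on justifying the integration by parts at the degenerate boundary via truncation at height $\varepsilon$ and the weighted trace/conormal interpretation are exactly the standard way this is made rigorous, so nothing further is needed.
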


\begin{proof}
The proof follows as for instance in Lemma 4.5 in \cite{RS17}, where however the boundary terms are estimated by an $L^2-L^2$ estimate instead of an $H^{s}-H^{-s}$ estimate.
\end{proof}

\subsection{Boundary decay implies bulk decay}
\label{sec:boundary_bulk}

In order to deal with the original nonlocal problem \eqref{eq:eq_main}, we seek to apply methods which were developed for (quantitative) unique continuation results for the local equation \eqref{eq:CS_eq_main}. Hence, we first translate the decay behaviour that is valid on $\R^n$ to decay behaviour which also holds on $\R^{n+1}_+$. To this end, we heavily rely on interior and boundary three balls estimates for the degenerate elliptic equation \eqref{eq:CS_eq_main}.

\begin{prop}
\label{prop:boundary_bulk}
Let $s\in(0,1)$ and $u\in H^{s}(\R^n)$ be a solution to \eqref{eq:eq_main}.
Assume that \eqref{qbound} holds and there exist constants $C,\beta \geq 1$ such that
\begin{equation}
\label{ubound}
\|e^{|x|^{\beta}/2}u\|_{L^2(\R^n)}\leq C .
\end{equation}   
Then, there exist constants $C_1,c_1>0$ such that for all $x=(x',x_{n+1})\in \R^{n+1}_+$ the Caffarelli-Silvestre extension $\tilde{u}(x)$ satisfies 
\begin{align*}
|\tilde{u}(x',x_{n+1})| \leq C_1 e^{-c_1 |(x',x_{n+1})|^{\beta}}. 
\end{align*}
\end{prop}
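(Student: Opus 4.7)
The plan is to propagate the boundary decay \eqref{ubound} into the upper half-space by combining local elliptic regularity for the Caffarelli--Silvestre extension \eqref{eq:CS_eq_main} with three-balls inequalities for the degenerate operator $\nabla\cdot x_{n+1}^{1-2s}\nabla$. As a first step, I convert the weighted $L^2$ bound \eqref{ubound} into a pointwise decay statement: a local $L^\infty$-$L^2_w$ estimate (Moser iteration for the $A_2$-weighted equation \eqref{eq:CS_eq_main} with bounded Neumann data) gives, for every $x_0'\in\R^n$,
\begin{equation*}
\|\tilde u\|_{L^\infty(B^+_1(x_0'))}\leq C\bigl(\|\tilde u\|_{L^2(B^+_2(x_0'),x_{n+1}^{1-2s})}+\|qu\|_{L^2(B'_2(x_0'))}\bigr),
\end{equation*}
and combined with \eqref{ubound} and $|q|\leq 1$ yields the locally uniform bound $\|u\|_{L^\infty(B'_1(x_0'))}+\|qu\|_{L^\infty(B'_1(x_0'))}\leq C e^{-c|x_0'|^{\beta}}$ for $|x_0'|$ large. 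Moreover, since the CS Poisson kernel $P_s(x,y)=c_{n,s}x_{n+1}^{2s}/(|x'-y|^2+x_{n+1}^2)^{(n+2s)/2}$ satisfies $\|P_s(x,\cdot)\|_{L^1(\R^n)}=\mathrm{const.}$, the representation $\tilde u(x)=c_{n,s}\int_{\R^n}P_s(x,y)u(y)\,dy$ together with the now-established $u\in L^{\infty}(\R^n)$ gives the global uniform bulk bound $\|\tilde u\|_{L^\infty(\R^{n+1}_+)}\leq C$.

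The central tool in transferring the decay from boundary to bulk is a Hadamard-type three-balls inequality for \eqref{eq:CS_eq_main}, a standard consequence of Carleman estimates in the spirit of \cite{RS17}. In its boundary version for concentric half-balls at a boundary center $y'\in\R^n$, it reads
\begin{equation*}
\|\tilde u\|_{L^2(B^+_{r_2}(y'),x_{n+1}^{1-2s})}\leq C\,\|\tilde u\|_{L^2(B^+_{r_3}(y'),x_{n+1}^{1-2s})}^{1-\theta}\bigl(\|u\|_{L^2(B'_{r_3}(y'))}+\|qu\|_{L^2(B'_{r_3}(y'))}\bigr)^{\theta},
\end{equation*}
with $\theta\in(0,1)$ depending only on $s$ and the ratios $r_i/r_j$; the interior version for triples of bulk balls has the analogous shape without boundary contributions. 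For a target $x\in\R^{n+1}_+$ with $|x|=\rho\gg 1$ lying within bounded distance of the boundary, I pick a center $y'$ with $|y'|\sim\rho$ and radii of order $\rho$ satisfying $r_3<|y'|/2$, so that $B'_{r_3}(y')\subset\{|x'|\geq c\rho\}$ and $x\in B^+_{r_2}(y')$. The first step then bounds both boundary terms by $Ce^{-c\rho^{\beta}}$, while the outer factor is at most polynomial in $\rho$: $\|\tilde u\|_{L^2(B^+_{r_3}(y'),x_{n+1}^{1-2s})}\leq \|\tilde u\|_{L^\infty}\cdot|B^+_{r_3}|_{x_{n+1}^{1-2s}}^{1/2}\lesssim \rho^{(n+2-2s)/2}$. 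A final Moser $L^\infty$-$L^2_w$ estimate applied in a unit half-ball about $x$ upgrades the resulting bound $\|\tilde u\|_{L^2(B^+_{r_2}(y'),x_{n+1}^{1-2s})}\leq Ce^{-c\theta\rho^{\beta}}$ to the pointwise estimate $|\tilde u(x)|\leq C_1 e^{-c_1\rho^{\beta}}$.

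It remains to treat the deep-bulk regime $x_{n+1}\sim\rho$, where no single boundary half-ball of admissible radius (bounded by $|y'|$) can cover $x$ without also reaching near the origin, at which $u$ is not small. I chain the boundary step above with a bounded number of applications of the \emph{interior} three-balls inequality along a short path of centers of increasing height, all remaining inside the region $\{|z|\gtrsim\rho\}$ where exponential smallness has already been established in the previous paragraph. Choosing each link's balls of comparable size $\sim\rho$ keeps the number of links bounded by an absolute constant and the interpolation exponent $\theta^{O(1)}$ bounded from below, so the final estimate retains the claimed form $C_1 e^{-c_1|x|^{\beta}}$ (with a possibly smaller constant $c_1$). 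The principal technical obstacle is precisely this coordination of centers and radii — ensuring that every half-ball entering the argument is far enough from the origin so that the boundary data seen there is already exponentially small, while still being large enough to advance into the bulk in $O(1)$ steps — and it is handled by the geometric choices described above.
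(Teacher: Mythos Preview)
Your overall strategy---establish $\tilde u\in L^\infty(\R^{n+1}_+)$, then apply a boundary--bulk three-balls inequality at a boundary center to get exponential smallness in half-balls of scale $\sim\rho$, and finally propagate into the deep bulk by a bounded chain of interior three-balls inequalities---is exactly the approach the paper takes (Propositions~\ref{prop:small_prop_int} and~\ref{prop:small_prop_bound} play precisely the roles you assign to the interior and boundary three-balls estimates, and Step~2 of the paper's proof is your chain argument).

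There is, however, a genuine gap in your first paragraph. The Moser estimate you quote has the \emph{bulk} quantity $\|\tilde u\|_{L^2(B^+_2(x_0'),\,x_{n+1}^{1-2s})}$ on the right-hand side, and the hypothesis~\eqref{ubound} lives only on $\R^n$; it says nothing about this bulk norm. Hence the asserted implication ``Moser $+$ \eqref{ubound} $\Rightarrow$ $\|u\|_{L^\infty(B'_1(x_0'))}\leq Ce^{-c|x_0'|^\beta}$'' does not follow as stated, and in particular the conclusion $u\in L^\infty(\R^n)$---which is the sole input to your Poisson-kernel step---is left unjustified. The missing ingredient is an a priori weighted $L^2$ bound on $\tilde u$ near the boundary: the paper obtains this by noting that $u\in H^s(\R^n)$ forces $x_{n+1}^{(1-2s)/2}\nabla\tilde u\in L^2(\R^{n+1}_+)$, after which a one-dimensional Poincar\'e inequality in $x_{n+1}$ bounds $\|x_{n+1}^{(1-2s)/2}\tilde u\|_{L^2(\R^n\times(0,C_1))}$ (with only polynomial growth in the height $C_1$). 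With that input your Moser step does give $u\in L^\infty(\R^n)$, your Poisson-kernel argument (a clean alternative to the paper's route via~\eqref{eq:LinfL2}) then yields the global bound $\tilde u\in L^\infty(\R^{n+1}_+)$, and the remainder of your proof goes through as written.
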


In order to infer the claimed interior decay, we rely on propagation of smallness estimates. Here we make use of two types of propagation of smallness results: The first being an interior propagation of smallness while the second one is a boundary-bulk propagation of smallness estimate. In order to use tools from the quantitative analysis of elliptic equations, in the sequel, we view \eqref{eq:eq_main} in terms of its Caffarelli-Silvestre extension \eqref{eq:CS_eq_main}. 

\begin{prop}
\label{prop:small_prop_int}
Let $s\in (0,1)$ and $\tilde u\in H^{1}(B_{4}^+, x_{n+1}^{1-2s})$ be a solution to \eqref{eq:CS}. Assume that $r\in (0,1)$ and $\overline{x}_0 = (\overline{x}_0', 5 r) \in B_{2}^+$. Then, there exists $\alpha = \alpha(n,s) \in (0,1)$ such that
\begin{align*}
\|\tilde{u}\|_{L^{\infty}(B_{2r}^+(\overline{x}_0))}
\leq C\|\tilde{u}\|_{L^{\infty}(B_{r}^+(\overline{x}_0))}^{\alpha} \|\tilde{u}\|_{L^{\infty}(B_{4r}^+(\overline{x}_0))}^{1-\alpha}.
\end{align*}
\end{prop}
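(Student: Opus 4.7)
The plan is to exploit the crucial fact that the center $\overline{x}_0 = (\overline{x}_0', 5r)$ sits at height $5r$ above the degenerate set $\{x_{n+1}=0\}$, while the largest ball under consideration has radius $4r$. Hence for every $x \in B_{4r}(\overline{x}_0)$ we have $x_{n+1} \in [r, 9r]$, so the Muckenhoupt weight $x_{n+1}^{1-2s}$ is pointwise comparable to $r^{1-2s}$. In particular the half-ball notation is cosmetic here: $B_{\rho}^+(\overline{x}_0) = B_\rho(\overline{x}_0)$ for $\rho \in \{r,2r,4r\}$, and $B_{4r}^+(\overline{x}_0) \subset B_4^+$, since $\overline{x}_0 \in B_2^+$ forces $5r \leq 2$ and therefore $|\overline{x}_0|+4r \leq 2 + 8/5 < 4$. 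Dividing the equation \eqref{eq:CS} by $x_{n+1}^{1-2s}$ on $B_{4r}(\overline{x}_0)$ one obtains the non-degenerate form
\begin{equation*}
\Delta \tilde{u} + \frac{1-2s}{x_{n+1}} \partial_{n+1} \tilde{u} = 0,
\end{equation*}
whose drift coefficient is bounded (by $|1-2s|/r$).

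Next I would reduce to a unit-scale problem by setting $v(y) := \tilde{u}(\overline{x}_0 + r y)$ for $y \in B_4(0) \subset \R^{n+1}$. A direct computation yields
\begin{equation*}
\Delta v + \frac{1-2s}{5 + y_{n+1}} \partial_{y_{n+1}} v = 0 \quad \text{in } B_4(0),
\end{equation*}
a uniformly elliptic equation on a \emph{full} $(n+1)$-dimensional ball with a smooth drift whose $L^\infty$ norm depends only on $s$ (since $5 + y_{n+1} \in [1,9]$). The rescaling leaves $L^\infty$ norms invariant, so the proposition for $\tilde{u}$ is equivalent to the three-ball inequality
\begin{equation*}
\|v\|_{L^\infty(B_2(0))} \leq C \|v\|_{L^\infty(B_1(0))}^{\alpha} \|v\|_{L^\infty(B_4(0))}^{1-\alpha}
\end{equation*}
for the rescaled $v$.

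Finally, I would invoke the classical three-ball inequality for solutions of second order uniformly elliptic equations with bounded drift, which holds with some $\alpha = \alpha(n,s) \in (0,1)$ and $C = C(n,s) > 0$. This is a standard consequence either of a Carleman estimate with a radial Gaussian weight centered at $\overline{x}_0$, or of the doubling property and Hadamard's three-circle-type convexity of the frequency function for such operators. Unwinding the rescaling $y = (x-\overline{x}_0)/r$ then yields the stated inequality.

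The only nontrivial ingredient is the classical three-ball inequality in the rescaled setting; everything else is the observation that, on scales on which the center stays at distance comparable to the radius from $\{x_{n+1}=0\}$, the degeneracy of the Caffarelli-Silvestre extension is invisible and one may appeal directly to the uniformly elliptic theory. The main point to be careful about is that the constants from the rescaled three-ball inequality depend only on $n$ and $s$ (and not on $r$), which is guaranteed by the uniform bound on the rescaled drift coefficient.
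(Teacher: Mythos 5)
Your proposal is correct and follows essentially the same route as the paper: since $(\overline{x}_0)_{n+1}=5r$, on $B_{4r}(\overline{x}_0)$ the weight $x_{n+1}^{1-2s}$ is comparable to $r^{1-2s}$, so after rescaling the equation is uniformly elliptic with constants depending only on $n,s$, and one concludes by a standard three-ball inequality (the paper phrases this as an interior $L^2$ three-ball estimate combined with $L^2$--$L^{\infty}$ bounds, while you invoke the $L^{\infty}$ version directly, which is an inessential difference).
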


\begin{proof}
As $(x_0)_{n+1} =5r $, this follows from a standard interior $L^2$ three balls estimate (c.f. Proposition 5.4 in \cite{RS17}) together with $L^2-L^{\infty}$ estimates for uniformly elliptic equations.
\end{proof}

\begin{prop}
\label{prop:small_prop_bound}
Let $s\in (0,1)$ and let $\tilde{u}\in H^{1}(\R^{n+1}_+, x_{n+1}^{1-2s})$ be a solution to \eqref{eq:CS_eq_main} with $q\in L^{\infty}(\R^n)$. Assume that $x_0\in\R^n\times\{0\}$. Then,
\begin{itemize}
\item[(a)] there exist $\alpha = \alpha(n,s) \in (0,1)$ and $c=c(n,s)\in (0,1)$ such that
\begin{align*}
\begin{split}
&\|x_{n+1}^{\frac{1-2s}{2}} \tu\|_{L^2(B_{cr}^+(x_0))}\\
\leq &C
\left(\|x_{n+1}^{\frac{1-2s}{2}} \tu\|_{L^2(B_{16r}^+(x_0))}+r^{1-s} \|u\|_{L^2(B_{16r}'(x_0))}\right)^{\alpha} \times\\
& \quad \quad \times
\left(r^{{s+1}}\|\lim\limits_{x_{n+1}\rightarrow 0} x_{n+1}^{1-2s} \p_{n+1} \tu\|_{L^{2}(B_{16r}'(x_0))}+r^{{1-s}} \|u\|_{L^2(B_{16r}'(x_0))}\right)^{1-\alpha}\\
&+C
\left(\|x_{n+1}^{\frac{1-2s}{2}} \tu\|_{L^2(B_{16r}^+(x_0))}+r^{{1-s}} \|u\|_{L^2(B_{16r}'(x_0))}\right)^{\frac{2s}{1+s}}\times \\
& \quad \quad \times \left(r^{{s+1}}\|\lim\limits_{x_{n+1}\rightarrow 0} x_{n+1}^{1-2s} \p_{n+1} \tu\|_{L^{2}(B_{16r}'(x_0))}+r^{{1-s}} \|u\|_{L^2(B_{16r}'(x_0))}\right)^{\frac{1-s}{1+s}}.
\end{split}
\end{align*}

\item[(b)] there exist $\alpha = \alpha(n,s) \in (0,1)$ and $c=c(n,s)\in (0,1)$ such that
\begin{align}
\label{eq:combi_up_Linf}
\begin{split}
\| \tu\|_{L^{\infty}(B_{\frac{cr}{2}}^+)}
\leq&C
r^{-\frac{n}{2}}\left(r^{s-1}\|x_{n+1}^{\frac{1-2s}{2}} \tu\|_{L^2(B_{16r}^+)}+ \|u\|_{L^2(B_{16r}')}\right)^{\alpha}
\left(r^{2s}\|qu\|_{L^{2}(B_{16r}')}+ \|u\|_{L^2(B_{16r}')}\right)^{1-\alpha}\\
&+C
r^{-\frac{n}{2}}\left(r^{s-1}\|x_{n+1}^{\frac{1-2s}{2}} \tu\|_{L^2(B_{16r}^+)}+ \|u\|_{L^2(B_{16r}')}\right)^{\frac{2s}{1+s}}
\left(r^{2s}\|qu\|_{L^{2}(B_{16r}')}+ \|u\|_{L^2(B_{16r}')}\right)^{\frac{1-s}{1+s}}\\
&+ Cr^{-\frac n2}r^s\|qu\|_{L^2(B_{16r}')}^{\frac 12}\|u\|_{L^2(B_{16r}')}^{\frac{1}{2}}.
\end{split}
\end{align}
\end{itemize}
\end{prop}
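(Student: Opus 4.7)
My plan is to reduce Proposition \ref{prop:small_prop_bound} to a propagation of smallness from the boundary into the bulk for the weighted equation \eqref{eq:CS} by splitting $\tilde u$ into a Dirichlet piece and a Neumann piece. By translation and the scaling $\tilde u(x)\mapsto \tilde u(rx)$, which leaves the Caffarelli-Silvestre operator invariant, I reduce to $x_0=0$ and $r=1$; the various powers of $r$ in the final statement are then forced by the dimensions of each norm.

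For part (a), I introduce an auxiliary $\tilde w\in H^1(B_{16}^+,x_{n+1}^{1-2s})$ solving the pure Dirichlet problem for $\nabla\cdot x_{n+1}^{1-2s}\nabla$ on $B_{16}^+$ with boundary data $\chi u$ on $B_{16}'$ and $0$ on the remainder of $\p B_{16}^+$ (with $\chi$ a suitable cutoff). A standard weighted energy estimate gives $\|x_{n+1}^{(1-2s)/2}\tilde w\|_{L^2(B_{16}^+)}\lesssim \|u\|_{L^2(B_{16}')}$, accounting for all the $r^{1-s}\|u\|_{L^2(B_{16r}')}$ contributions after unscaling. The remainder $\tilde v=\tilde u-\tilde w$ has vanishing Dirichlet trace on $B_{16}'$, so odd reflection across $\{x_{n+1}=0\}$ extends it to a solution of the weighted equation on the full ball $B_{16}$ (the weight becoming $|x_{n+1}|^{1-2s}$, still Muckenhoupt $A_2$). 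I then invoke an interior $L^2$-three balls inequality in the spirit of \cite{RS17}, which interpolates $\|x_{n+1}^{(1-2s)/2}\tilde v\|_{L^2(B_c^+)}$ between its $L^2$-norm on the large ball and its generalized Neumann data, controlled by $\|qu\|_{L^2(B_{16}')}+\|\lim\limits_{x_{n+1}\to 0}x_{n+1}^{1-2s}\p_{n+1}\tilde w\|_{L^2(B_{16}')}$.

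The reason two distinct exponent pairs $(\alpha,1-\alpha)$ and $(2s/(1+s),(1-s)/(1+s))$ appear is that the Neumann trace of the Dirichlet extension $\tilde w$ lives naturally in $H^{-s}(B_{16}')$ rather than in $L^2$. Estimating it in $L^2$ requires an additional interpolation between the $H^{-s}$-bound $\lesssim \|u\|_{L^2(B_{16}')}$ and a higher-regularity bound from the bulk energy; optimizing the interpolation parameter yields the conjugate pair $(2s/(1+s),(1-s)/(1+s))$, whose sum is $1$. Part (b) then follows from (a) by applying De Giorgi-Nash-Moser $L^2$-to-$L^\infty$ bounds for the $A_2$-weighted equation \eqref{eq:CS_eq_main} on the slightly smaller ball $B_{cr/2}^+$; the stand-alone summand $Cr^{-n/2}r^s\|qu\|_{L^2(B_{16r}')}^{1/2}\|u\|_{L^2(B_{16r}')}^{1/2}$ arises from an analogous $L^\infty$-control of the Dirichlet piece $\tilde w$ through a weighted Poisson-kernel representation.

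I expect the main obstacle to be the sharp bookkeeping of the two nested interpolations. A direct application of the interior three balls estimate alone reproduces only the first summand in each bound, while the second summand is indispensable in the regime where the Dirichlet data dominates the interior energy. Capturing it forces the $H^{-s}\to L^2$ trace loss of the Dirichlet extension to be absorbed into a second, independent interpolation whose exponents must remain compatible with the scaling of each norm involved — this is the delicate quantitative point where the subelliptic nature of the degenerate operator at $\{x_{n+1}=0\}$ actually enters.
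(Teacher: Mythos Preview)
Your overall architecture---reduce by scaling to $r=1$, split $\tilde u$ into a Dirichlet piece and a zero-Dirichlet remainder, then interpolate---matches the paper. But several of the mechanisms you invoke are not the ones that actually deliver the estimate, and at least one is a genuine gap.

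\textbf{The estimate for the zero-Dirichlet piece.} You propose to odd-reflect $\tilde v$ across $\{x_{n+1}=0\}$ and then apply an \emph{interior} three-balls inequality. An interior three-balls estimate interpolates $\|\tilde v\|_{B_{2r}}$ between $\|\tilde v\|_{B_r}$ and $\|\tilde v\|_{B_{4r}}$; it never produces the Neumann datum on the right-hand side. What is actually needed---and what the paper uses---is the dedicated boundary-bulk propagation estimate from \cite{RS17} (Propositions~5.10--5.12 there), which for a function with vanishing Dirichlet trace yields
\[
\|x_{n+1}^{\frac{1-2s}{2}} u_2\|_{L^2(B_c^+)}
\le C\,\|x_{n+1}^{\frac{1-2s}{2}} u_2\|_{L^2(B_2^+)}^{\alpha}\,
\|\lim_{x_{n+1}\to 0} x_{n+1}^{1-2s}\partial_{n+1} u_2\|_{L^2(B_2')}^{\,1-\alpha}
\]
(plus an extra additive term when $s<\tfrac12$). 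This is a Carleman-based result, not a consequence of odd reflection. As written, your plan does not reach this inequality.

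\textbf{The second exponent pair.} The pair $\bigl(\tfrac{2s}{1+s},\tfrac{1-s}{1+s}\bigr)$ does not come from an $H^{-s}$ bound on the Neumann trace of the Dirichlet extension. In the paper it arises from upgrading the $L^2$ Neumann norm in the display above to an $H^{-2s}$ norm via the trace interpolation
\[
\|w\|_{L^2(\R^n)}\le C\|w\|_{H^{1-s}(\R^n)}^{\frac{2s}{1+s}}\|w\|_{H^{-2s}(\R^n)}^{\frac{1-s}{1+s}},
\]
applied to $w=\eta\,x_{n+1}^{1-2s}\partial_{n+1}u_2$; the $H^{1-s}$ factor is then controlled by bulk Caccioppoli. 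The point of passing to $H^{-2s}$ is that the Neumann data of the \emph{global} Caffarelli--Silvestre extension $u_1$ of $\zeta u$ equals $(-\Delta)^s(\zeta u)$, whose $H^{-2s}$ norm is bounded by $\|u\|_{L^2(B_{16}')}$. Your local Dirichlet solver on $B_{16}^+$ loses this clean identification.

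\textbf{The extra summand in (b).} The stand-alone term $Cr^{-n/2}r^s\|qu\|_{L^2}^{1/2}\|u\|_{L^2}^{1/2}$ does not come from a Poisson-kernel bound on $\tilde w$. It is precisely the boundary contribution of the Caccioppoli inequality (Lemma~\ref{lem:Cacc}) when one adds $\|x_{n+1}^{\frac{1-2s}{2}}\nabla\tilde u\|_{L^2}$ to the left of the part~(a) estimate before applying the $L^2\to L^\infty$ bound of Jin--Li--Xiong.

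In short: keep the splitting and the scaling reduction, but replace ``odd reflection $+$ interior three balls'' by the boundary-bulk interpolation of \cite{RS17}, run the $H^{-2s}$ interpolation on the Neumann datum of $u_2$, and obtain the last term in (b) from Caccioppoli rather than from a Poisson representation.
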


\begin{proof}
The proof relies on a splitting argument and the boundary-bulk interpolation estimates from Propositions 5.10-5.12 (also Proposition 5.6) in \cite{RS17}. 
In order to infer the claim, we argue in two steps, first deriving a suitable $L^2$ estimate and then upgrading this to an $L^{\infty}$ estimate. By scaling, it suffices to prove the estimate for $r=1$. Without loss of generality, we can take $x_0=0$.\\

\emph{Step 1: The $L^2$ estimate.}
For the $L^2$ estimate we rely on Propositions 5.10-5.12 in \cite{RS17}. Here we distinguish between the cases $s\in [\frac{1}{2},1)$ and $s\in(0,\frac{1}{2})$.\\

\emph{Step 1a: The case $s \in [\frac{1}{2},1)$.}
In order to invoke the estimate from \cite{RS17}, we split our solution $u$ into two parts $\tu=u_1+u_2$. The function $u_1$ deals with the Dirichlet data
\begin{align*}
\nabla \cdot x_{n+1}^{1-2s} \nabla u_1 & = 0 \mbox{ in } \R^{n+1}_+,\\
u_1 & = \zeta u \mbox{ in } \R^{n} \times \{0\}.
\end{align*}
Here $\zeta \in C^{\infty}_0(B_{16}')$ is a smooth cut-off function, which is equal to one in $B_{8}'$. We will estimate $u_1$ by bounds on the Caffarelli-Silvestre extension. The function $u_2=\tu-u_1$ in turn is admissible in  Propositions 5.10-5.12 in \cite{RS17}, i.e., $u_2|_{B_8'}=0$.

We begin with the estimate for $u_1$:
Invoking Lemma 4.2 in \cite{RS17}, we obtain the bound
\begin{align}
\label{eq:L2u1}
\|x_{n+1}^{\frac{1-2s}{2}} u_1\|_{L^2(\R^{n+1}_+)}
\leq C \|\zeta u \|_{H^{s-1}(\R^n)} \leq C \|\zeta u\|_{L^2(\R^n)} \leq C \|u\|_{L^2(B_{16}')}.
\end{align}
The estimate for $u_2$ follows from Proposition 5.10 in \cite{RS17}. The result assert that for each $s\in (\frac{1}{2},1)$ there exists a constant $c=c(s,n)\in (0,1)$ and $\alpha = \alpha(s,n)\in (0,1)$ such that
\begin{equation}\label{eq:L2u2}
\|x_{n+1}^{\frac{1-2s}{2}} u_2\|_{L^2(B_{c}^+)}
\leq C \|x_{n+1}^{\frac{1-2s}{2}} u_2\|_{L^2(B_{2}^+)}^{\alpha}
\|\lim\limits_{x_{n+1}\rightarrow 0} x_{n+1}^{1-2s} \p_{n+1} u_2\|_{L^2(B_{2}')}^{1-\alpha}.
\end{equation}
We modify this by interpolation in order to obtain an estimate where the normal derivative of $u_2$ is measured in the $H^{-2s}$ norm. To this end, we note that for any $w$ by interpolation and the characterization of the trace map (c.f. Step 1 in the proof of Proposition 5.11 in \cite{RS17}):
\begin{align}
\label{eq:bulk_boundary_interpol}
\begin{split}
\|w\|_{L^2(\R^n)}
&\leq C \| w\|_{H^{1-s}(\R^n)}^{\frac{2s}{1+s}}\|w\|_{H^{-2s}(\R^n)}^{\frac{1-s}{1+s}}\\
&\leq C\left(\|x_{n+1}^{\frac{2s-1}{2}}w \|_{L^2(\R^{n+1}_+)}+\|x_{n+1}^{\frac{2s-1}{2}} \nabla w \|_{L^2(\R^{n+1}_+)}\right)^{\frac{2s}{1+s}} \|w\|_{H^{-2s}(\R^n)}^{\frac{1-s}{1+s}} \\
&\leq C\left(\mu^{1-s}\left(\|x_{n+1}^{\frac{2s-1}{2}}w \|_{L^2(\R^{n+1}_+)}+\|x_{n+1}^{\frac{2s-1}{2}} \nabla w \|_{L^2(\R^{n+1}_+)}\right) + \mu^{-2s}\| w\|_{H^{-2s}(\R^n)} \right). 
\end{split}
\end{align}
Applying this to $w= \eta x_{n+1}^{1-2s} \p_{n+1} u_2$, where $\eta$ is a smooth, radial cut-off function which is equal to one on $B_{2}^+$ and vanishes outside of $B_{4}^+$ gives us
\begin{equation}\label{eq:interpol_2s}
\begin{aligned}
&\|\lim\limits_{x_{n+1}\rightarrow 0} x_{n+1}^{1-2s} \p_{n+1} u_2\|_{L^2(B_{2}')}\\
\leq &C\left(\mu^{1-s}\left(\|x_{n+1}^{\frac{1-2s}{2}}\partial_{n+1} u_2\|_{L^2(B_{4}^+)}+\|x_{n+1}^{\frac{2s-1}{2}} \nabla (\eta x_{n+1}^{1-2s} \p_{n+1} u_2)\|_{L^2(\R^{n+1}_+)}\right)\right.\\
&\left. + \mu^{-2s} \|\lim_{x_{n+1}\rightarrow 0}\eta x_{n+1}^{1-2s} \p_{n+1} u_2\|_{H^{-2s}(\R^n)}\right).
\end{aligned}
\end{equation}

Similar to the proof of Proposition 5.11 in \cite{RS17}, we now estimate each term on the right hand side of \eqref{eq:interpol_2s}.
The last term gives us
\begin{equation}\label{0807-1}
 \|\lim_{x_{n+1}\rightarrow 0}\eta x_{n+1}^{1-2s} \p_{n+1} u_2\|_{H^{-2s}(\R^n)}\le C\|\lim_{x_{n+1}\rightarrow 0}x_{n+1}^{1-2s} \p_{n+1} u_2\|_{H^{-2s}(B_{8}')}.
 \end{equation}
Applying Caccioppoli's inequality in Lemma~\ref{lem:Cacc} (with zero Dirichlet condition) implies
\begin{equation}\label{0808-1}
\|x_{n+1}^{\frac{1-2s}{2}}\partial_{n+1} u_2\|_{L^2(B_{4}^+)}\le C\|x_{n+1}^{\frac{1-2s}{2}}u_2\|_{L^2(B_{8}^+)}.
\end{equation}
It remains to estimate the second term on the right hand side of \eqref{eq:interpol_2s}. Indeed, for the resulting bulk term we have
\begin{equation}\label{0808-2}
\begin{aligned}
\|x_{n+1}^{\frac{2s-1}{2}} \nabla (\eta x_{n+1}^{1-2s} \p_{n+1} u_2)\|_{L^2(\R^{n+1}_+)}
&\leq \|x_{n+1}^{\frac{1-2s}{2}} (\nabla \eta) (\p_{n+1} u_2)\|_{L^2(\R^{n+1}_+)}+ \|x_{n+1}^{\frac{1-2s}{2}} \eta \nabla' \p_{n+1} u_2\|_{L^2(\R^{n+1}_+)} \\
& \quad + \|x_{n+1}^{\frac{2s-1}{2}} \eta \p_{n+1} x_{n+1}^{1-2s} \p_{n+1} u_2\|_{L^2(\R^{n+1}_+)}\\
&\leq \|x_{n+1}^{\frac{1-2s}{2}} (\p_{n+1} u_2)\|_{L^2(B_{4}^+)}  + \|x_{n+1}^{\frac{1-2s}{2}} \nabla' \p_{n+1} u_2\|_{L^2(B_{4}^+)} \\
& \quad + \|x_{n+1}^{\frac{1-2s}{2}} \eta \Delta' u_2\|_{L^2(\R^{n+1}_+)}\\
& \leq C \|x_{n+1}^{\frac{1-2s}{2}} u_2\|_{L^2(B_{8}^+)}.
\end{aligned}
\end{equation}
Here we first used the triangle inequality, then the support condition for $\eta$ and the equation for $u_2$ and finally applied Caccioppoli's inequality (twice for the last two terms, noting that $\nabla' u_2$ solves a similar problem).


Substituting \eqref{0807-1}-\eqref{0808-2} into \eqref{eq:interpol_2s} and optimizing the resulting estimate in $\mu>0$ gives
\begin{align*}
\|\lim\limits_{x_{n+1}\rightarrow 0} x_{n+1}^{1-2s} \p_{n+1} u_2\|_{L^2(B_{2}')}
\leq C \|x_{n+1}^{\frac{1-2s}{2}} u_2\|_{L^2(B_{8}^+)}^{\frac{2s}{1+s}}\|\lim\limits_{x_{n+1}\rightarrow 0} x_{n+1}^{1-2s} \p_{n+1}u_2\|_{H^{-2s}(B_{8}')}^{\frac{1-s}{1+s}}.
\end{align*}
Inserting this into \eqref{eq:L2u2} leads to
\begin{equation}
\label{eq:L2u2_a}
\|x_{n+1}^{\frac{1-2s}{2}} u_2\|_{L^2(B_{c}^+)}
\leq C \|x_{n+1}^{\frac{1-2s}{2}} u_2\|_{L^2(B_{8}^+)}^{\tilde{\alpha}}
\|\lim\limits_{x_{n+1}\rightarrow 0} x_{n+1}^{1-2s} \p_{n+1} u_2\|_{H^{-2s}(B_{8}')}^{1-\tilde{\alpha}}
\end{equation}
where $\tilde{\alpha}= \frac{1-s}{1+s}\alpha + \frac{2s}{1+s}$. By slight abuse of notation, in the sequel, we simply drop the tilde. Combining the two bounds \eqref{eq:L2u1}, \eqref{eq:L2u2_a} and
\[
\|\lim\limits_{x_{n+1}\rightarrow 0} x_{n+1}^{1-2s} \p_{n+1} u_1\|_{H^{-2s}(B_{16}')}
\leq \|(-\D)^s u_1\|_{L^2(\R^n)}
\leq \|u_1\|_{L^2(\R^n)}
\leq C\|u\|_{L^2(B_{16}')},
\]
and applying the triangle inequality leads to 
\begin{align}
\label{eq:combi}
\begin{split}
&\|x_{n+1}^{\frac{1-2s}{2}} \tu\|_{L^2(B_{c}^+)}\\
\leq &C 
\left(\|x_{n+1}^{\frac{1-2s}{2}} \tu\|_{L^2(B_{16}^+)}+ \|u\|_{L^2(B_{16}')}\right)^{\alpha}
\left(\|\lim\limits_{x_{n+1}\rightarrow 0} x_{n+1}^{1-2s} \p_{n+1} \tu\|_{H^{-2s}(B_{16}')}+ \|u\|_{L^2(B_{16}')}\right)^{1-\alpha}\\
\leq &C
\left(\|x_{n+1}^{\frac{1-2s}{2}} \tu\|_{L^2(B_{16}^+)}+ \|u\|_{L^2(B_{16}')}\right)^{\alpha}
\left(\|\lim\limits_{x_{n+1}\rightarrow 0} x_{n+1}^{1-2s} \p_{n+1} \tu\|_{L^{2}(B_{16}')}+ \|u\|_{L^2(B_{16}')}\right)^{1-\alpha}.
\end{split}
\end{align}
This already implies the claim of (a). In order to exploit it for the proof of (b), we strengthen the estimate slightly.
By Caccioppoli's inequality of Lemma~\ref{lem:Cacc} (now with an $L^2$ estimate for the boundary contributions), we can further upgrade \eqref{eq:combi} to 
\begin{align}
\label{eq:combi_up0}
\begin{split}
&\|x_{n+1}^{\frac{1-2s}{2}} \tu\|_{L^2(B_{\tilde{c}}^+)}
+ \|x_{n+1}^{\frac{1-2s}{2}} \nabla \tu\|_{L^2(B_{\tilde{c}}^+)}\\
\leq &C
\left(\|x_{n+1}^{\frac{1-2s}{2}} \tu\|_{L^2(B_{16}^+)}+ \|u\|_{L^2(B_{16}')}\right)^{\alpha}
\left(\|\lim\limits_{x_{n+1}\rightarrow 0} x_{n+1}^{1-2s} \p_{n+1} \tu\|_{L^{2}(B_{16}')}+ \|u\|_{L^2(B_{16}')}\right)^{1-\alpha}\\
&+ C\|\lim\limits_{x_{n+1}\rightarrow 0} x_{n+1}^{1-2s} \p_{n+1} \tu\|_{L^2(B_{16}')}^{\frac 12}\|u\|_{L^2(B_{16}')}^{\frac{1}{2}}
\end{split}
\end{align}
with $\tilde{c}=c/2$.\\

\emph{Step 1b: The case $s\in (0,\frac{1}{2})$.} The case $s\in (0,1/2)$ is similar as the case discussed above and relies on a splitting strategy. As above, the estimate for $u_1$ is a direct consequence of the boundary bulk estimates for the Caffarelli-Silvestre extension. Thus, the main remaining estimate is the derivation of the corresponding analogue of \eqref{eq:L2u2_a}. As in the proof of \eqref{eq:L2u2_a} this follows an application of the corresponding $L^2$ result from \cite{RS17} (Proposition 5.12) and interpolation. More precisely, Proposition 5.12 in \cite{RS17} implies that for some $c=c(n,s)>0$ and $\alpha = \alpha(n,s)\in (0,1)$ we have
\begin{align*}
\begin{split}
&\|x_{n+1}^{\frac{1-2s}{2}} u_2 \|_{L^2(B_c^+)}\\
&\leq C (\|x_{n+1}^{\frac{1-2s}{2}} u_2\|_{L^2(B_{2}^+)}^{\alpha} \|\lim\limits_{x_{n+1}\rightarrow 0} x_{n+1}^{1-2s} \p_{n+1} u_2\|_{L^{2}(B_{2}')}^{1-\alpha} + \| \lim\limits_{x_{n+1}\rightarrow 0}x_{n+1}^{1-2s}\p_{n+1} u_2\|_{L^{2}(B_{2}')}).
\end{split}
\end{align*}
In order to pass from this estimate which involves an $L^2$ norm of the weighted Neumann data to an estimate which involves its $H^{-2s}$ norm, we apply the interpolation estimate \eqref{eq:bulk_boundary_interpol} as in the case $s\in (\frac{1}{2},1)$. With this estimate at hand, the analogues of \eqref{eq:combi} and \eqref{eq:combi_up0} then follow by combining the estimates of the splitting argument as above. Note that \eqref{eq:combi_up0} now becomes
\begin{align}
\label{eq:combi_up}
\begin{split}
&\|x_{n+1}^{\frac{1-2s}{2}} \tu\|_{L^2(B_{\tilde{c}}^+)}
+ \|x_{n+1}^{\frac{1-2s}{2}} \nabla \tu\|_{L^2(B_{\tilde{c}}^+)}\\
\leq &C
\left(\|x_{n+1}^{\frac{1-2s}{2}} \tu\|_{L^2(B_{16}^+)}+ \|u\|_{L^2(B_{16}')}\right)^{\alpha}
\left(\|\lim\limits_{x_{n+1}\rightarrow 0} x_{n+1}^{1-2s} \p_{n+1} \tu\|_{L^{2}(B_{16}')}+ \|u\|_{L^2(B_{16}')}\right)^{1-\alpha}\\
&+C
\left(\|x_{n+1}^{\frac{1-2s}{2}} \tu\|_{L^2(B_{16}^+)}+ \|u\|_{L^2(B_{16}')}\right)^{\frac{2s}{1+s}}
\left(\|\lim\limits_{x_{n+1}\rightarrow 0} x_{n+1}^{1-2s} \p_{n+1} \tu\|_{L^{2}(B_{16}')}+ \|u\|_{L^2(B_{16}')}\right)^{\frac{1-s}{1+s}}\\
&+ C\|\lim\limits_{x_{n+1}\rightarrow 0} x_{n+1}^{1-2s} \p_{n+1} \tu\|_{L^2(B_{16}')}^{\frac 12}\|u\|_{L^2(B_{16}')}^{\frac{1}{2}}.
\end{split}
\end{align}

\emph{Step 2: The $L^{\infty}$ estimate.}
In order to pass from the $L^2$-based bounds from step 1 to $L^{\infty}$ based estimates, we rely on an estimate due to Jin, Li, Xiong \cite{JLX11} (Proposition 2.4 (i), c.f. also Proposition 3.2 in \cite{FF14}), which states that under our conditions on $B_{1/2}^+$ it holds
\begin{align}
\label{eq:LinfL2}
\|\tilde{u}\|_{L^{\infty}(B_{1/2}^+)} \leq C (\|x_{n+1}^{\frac{1-2s}{2}} \tilde{u}\|_{L^{2}(B_1^+)} + \|x_{n+1}^{\frac{1-2s}{2}} \nabla \tilde{u}\|_{L^{2}(B_1^+)}).
\end{align}
Combining this with the estimate \eqref{eq:combi_up} and inserting the identity $c_{n,s}\lim\limits_{x_{n+1}\rightarrow 0} x_{n+1}^{1-2s} \p_{n+1} \tu = q u $ entails
\begin{align}
\label{eq:combi_up_Linf1}
\begin{split}
\| \tu\|_{L^{\infty}(B_{\frac{\tilde c}{2}}^+)}
\leq&C
\left(\|x_{n+1}^{\frac{1-2s}{2}} \tu\|_{L^2(B_{16}^+)}+ \|u\|_{L^2(B_{16}')}\right)^{\alpha}
\left(\|qu\|_{L^{2}(B_{16}')}+ \|u\|_{L^2(B_{16}')}\right)^{1-\alpha}\\
&+C
\left(\|x_{n+1}^{\frac{1-2s}{2}} \tu\|_{L^2(B_{16}^+)}+ \|u\|_{L^2(B_{16}')}\right)^{\frac{2s}{1+s}}
\left(\|qu\|_{L^{2}(B_{16}')}+ \|u\|_{L^2(B_{16}')}\right)^{\frac{1-s}{1+s}}\\
&+ C\|qu\|_{L^2(B_{16}')}^{\frac 12}\|u\|_{L^2(B_{16}')}^{\frac{1}{2}}.
\end{split}
\end{align}
Based on this, we can also derive a pure $L^\infty$ estimate.  Indeed, using H\"{o}lder's inequality together with the $L^2$ integrability of the function $(0,1) \ni t \mapsto t^{\frac{1-2s}{2}} \in \R$ for $s\in (0,1)$ on bounded domains results in
\begin{align*}
\begin{split}
\| \tu\|_{L^{\infty}(B_{\frac{\tilde c}{2}}^+)}
\leq&C (1+\|q\|_{L^{\infty}(B_{16}')})^{1-\alpha}
\left(\| \tu\|_{L^{\infty}(B_{16}^+)}+ \|u\|_{L^{\infty}(B_{16}')}\right)^{\alpha}
\|u\|_{L^{\infty}(B_{16}')}^{1-\alpha}\\
&+C(1+\|q\|_{L^{\infty}(B_{16}')})^{\frac{1-s}{1+s}}\left(\| \tu\|_{L^{\infty}(B_{16}^+)}+ \|u\|_{L^{\infty}(B_{16}')}\right)^{\frac{2s}{1+s}}
\|u\|_{L^{\infty}(B_{16}')}^{\frac{1-s}{1+s}}\\
&+ C\|q\|_{L^{\infty}(B_{16}')}^{\frac{1}{2}} \|u\|_{L^{\infty}(B_{16}')}\\
\leq&C (1+\|q\|_{L^{\infty}(B_{16}')})^{\frac{1-s}{1+s}}\left(
\left(\| \tu\|_{L^{\infty}(B_{16}^+)}+ \|u\|_{L^{\infty}(B_{16}')}\right)^{\alpha}
\|u\|_{L^{\infty}(B_{16}')}^{1-\alpha}\right.\\
&\left.+\left(\| \tu\|_{L^{\infty}(B_{16}^+)}+ \|u\|_{L^{\infty}(B_{16}')}\right)^{\frac{2s}{1+s}}
\|u\|_{L^{\infty}(B_{16}')}^{\frac{1-s}{1+s}}\right)\\
&+ C\|q\|_{L^{\infty}(B_{16}')}^{\frac{1}{2}} \|u\|_{L^{\infty}(B_{16}')}.
\end{split}
\end{align*}
Here we recall that 
\[
1-\alpha=\frac{1-s}{1+s}-\frac{1-s}{1+s}\alpha<\frac{1-s}{1+s}.
\]
\end{proof}

\begin{proof}[Proof of Proposition \ref{prop:boundary_bulk}]

\emph{Step 1: $L^{\infty}$ decay.}
We first prove that the $L^2$ bound in the statement of the proposition entails a similar $L^{\infty}$ bound. In the sequel, we denote by $\tilde{c},\tilde{C}$ general positive constants which may depend on $n$, $s$ and which are likely to change from line to line. Pick any $R\geq 1$ and $x_0 \in \R^n \times \{0\}$ with $|x_0|= 32 R$, \eqref{ubound} implies
\begin{align}
\label{eq:boundaryL2}
\|u\|_{L^2(B_{16 R}'(x_0))} \leq \tilde{C} e^{-\tilde{c}R^{\beta}} .
\end{align}
We next recall that by the $H^{s}(\R^n)$ boundedness of $u$ and the properties of the Caffarelli-Silvestre extension, we have $\| x_{n+1}^{\frac{1-2s}{2}} \nabla \tu \|_{L^2(\R^{n+1}_+)}\leq C$. Thus, the $L^2$ boundedness of $u$ and Poincar\'e's inequality then also yield a bound for $\|x_{n+1}^{\frac{1-2s}{2}} \tu\|_{L^2(\R^n \times (0,C_1))}$ for any $C_1 >0$.  
Combining the bound from Proposition \ref{prop:small_prop_bound}(a), the $L^2_{loc}(\R^{n+1}_+, x_{n+1}^{1-2s})$ boundedness of $\tilde{u}$, the fact that $\|q\|_{L^{\infty}} \leq 1$ and \eqref{eq:boundaryL2}, we infer that
\begin{align*}
\|x_{n+1}^{\frac{1-2s}{2}}\tilde u\|_{L^2(B_{c R}^+(x_0))} \leq \tilde{C} e^{-\tilde{c}R^{\beta}} .
\end{align*}
Here $c>0$ denotes the constant from Proposition \ref{prop:small_prop_bound}. Finally, invoking a translated and rescaled version of \eqref{eq:combi_up_Linf} then also entails the bound 
\begin{equation}\label{ubound2}
\|\tu\|_{L^{\infty}(B_{\frac{c^2R}{16}}^+(x_0))} \leq \tilde{C} e^{-\tilde{c}R^{\beta}} .
\end{equation}
This yields a bound for $\tu$ for $|x|\geq 32 R$. We may in particular use this for $R=10$. Hence, only on the compact set $B_{C}^+$ with $C=320$ an $L^{\infty}$ bound has not yet been obtained. This however follows by applying a rescaled version of \eqref{eq:LinfL2}. Thus, $\tu \in L^{\infty}(\R^{n+1}_+)$. The estimate in the bounded region and the quantitative estimate in the unbounded annuli can finally be combined to infer that for all $R>0$ and $x_0 \in \R^{n}\times \{0\}$, $|x_0|=32 R$ we have
\begin{equation*}
\|\tu\|_{L^{\infty}(B_{\frac{c^2R}{16}}^+(x_0))} \leq \tilde{C} e^{-\tilde{c}R^{\beta}} .
\end{equation*}
In particular, by choosing $R=\frac{16}{c} \tilde{R}$ and keeping $x_0 \in \R^{n}\times \{0\}$, $|x_0| = 32 R$, we also obtain the estimate 
\begin{equation}\label{ubound4}
\|\tu\|_{L^{\infty}(B_{c \tilde{R}}^+(x_0))} \leq \tilde{C} e^{-\bar{c}\tilde{R}^{\beta}} .
\end{equation}

\emph{Step 2: Conclusion.}
With the bounds from Step 1, Propositions \ref{prop:small_prop_int} and \ref{prop:small_prop_bound} at hand, the proof of Proposition \ref{prop:boundary_bulk} follows by a chain of balls argument. 
More precisely, for $x=(x', x_{n+1})$ with $|x|> 2$ there exists a value $R = 2^{k}$ for $k\in \N$ such that $x\in A_{R,2R}^+:=\{x\in \R^{n+1}_+: \ |x| \in (R,2R] \}$. This annulus can be covered by a finite union of balls and half balls:
\begin{align*}
A_{R,2R}^+ \subset \bigcup\limits_{j=1}^{m_1} B_{r_j}^+(x_j) \cup  \bigcup\limits_{k=1}^{m_2} B_{c R}^+((x_k',0)) 
\end{align*}
with the property that these balls form a chain, i.e. there is sufficient overlap between these to iterate the following estimates (c.f. Figure \ref{fig:annuli} for an illustration of this), where $c$ is the constant derived in Proposition \ref{prop:small_prop_bound}.
\begin{figure}
\includegraphics[width=0.6\textwidth]{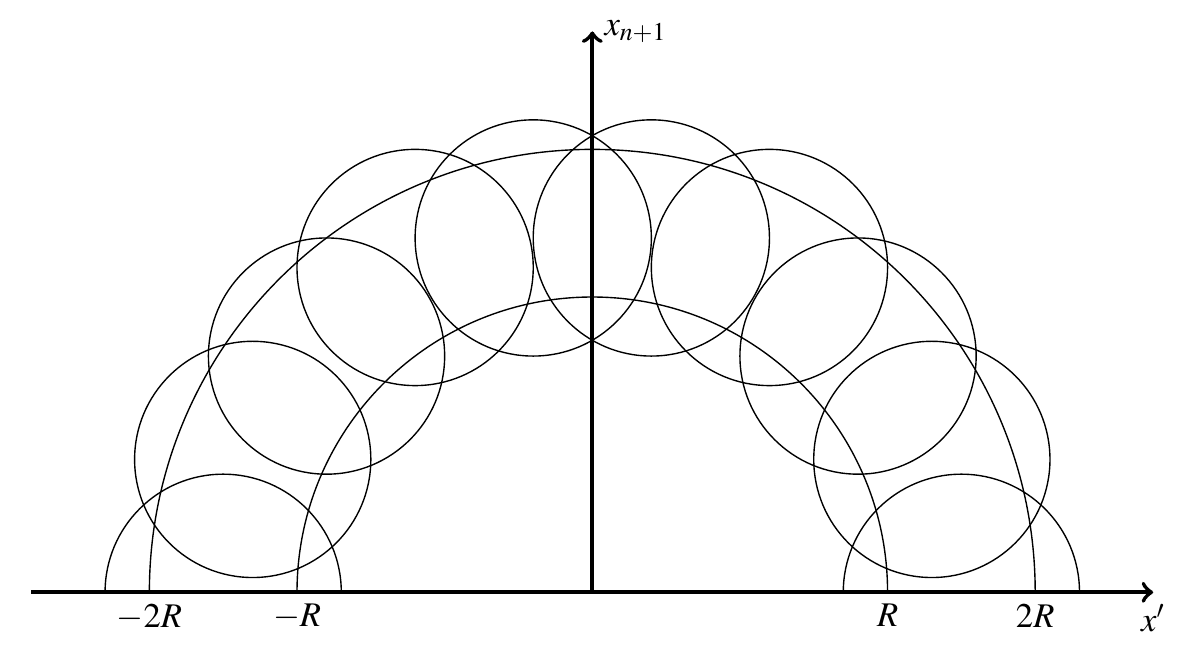}
\caption{The chain of balls from the proof of Proposition \ref{prop:boundary_bulk}.}
\label{fig:annuli}
\end{figure}
We explain this iteration more carefully.  Starting with a half ball $B_{cR}^+(\bar{x})$ with $\bar{x}=(x_1',0) \in \R^n \times \{0\}$ we invoke \eqref{ubound4} to infer that
\begin{align*}
\| \tu \|_{L^{\infty}(B_{c R}^+(\bar{x}))}\leq C e^{-aR^{\beta}},
\end{align*}
where $a=a(n,s)>0$. 

We now begin with the propagation of decay estimates into $\R^{n+1}_+$ along a chain of balls. We first choose a ball $B_{r_1}^+(x_1) \subset B_{c R}^+(\bar{x})$ with $r_1 = \frac{c R}{5}$ in such a way that a large part of $B_{r_1}^+(x_1) \subset A_{R,2R}^+$, $B_{4r_1}^+ \subset \R^{n+1}_+$  and such that $|B_{2 r_1}^+(x_1) \Delta B_{c R}^+(\bar{x})| \geq c_0 R$ for some constant $c_0=c_0(n,s)>0$. In $B_{4r_1}^+(x_1)$ we apply the three balls inequality of Proposition \ref{prop:small_prop_int}. Therefore, in combination with the $L^{\infty}$ estimates from step 1, we obtain
\begin{align*}
\|\tu\|_{L^{\infty}(B_{2 r_1}^+(x_1))} 
&\leq C \|\tu\|_{L^{\infty}(B_{4 r_1}^+(x_1))}^{\alpha} \|\tu\|_{L^{\infty}(B_{r_1}^+(x_1))}^{1-\alpha}\\
& \leq C \|\tu\|_{L^{\infty}(B_{c R}^+(\bar x))}^{1-\alpha}\\
& \leq C e^{-a(1-\alpha) R^{\beta}}. 
\end{align*}
We iterate this along our chain of balls $B_{r_j}^+(x_j)$ with $r_j=\frac{cR}{5}$, which allows us to eventually cover $A_{R,2R}^+$ with a finite number of balls. In particular, it yields the decay estimate
\begin{align*}
\|\tu\|_{L^{\infty}(A_{R,2R}^+)} 
& \leq C e^{-\tilde a R^{\beta}}
\end{align*}
for some positive constants $C(m_1,m_2,n,s), \tilde a(m_1,m_2,n,s)$. Finally, the analogous bounds also follow in arbitrary other dyadic annuli centered at zero by rescaling the previous estimate (the number of balls in the chain of balls estimate stays constant, since the size of the balls is also rescaled).
\end{proof}

\subsection{An interpolation inequality}\label{sec:interpol}

We need another bulk-boundary interpolation estimate, which will play a relevant role in our Carleman inequalities in the next section. Although this was already introduced in \cite{Rue15}, we reprove it here for self-containedness.

\begin{prop}
\label{prop:interpol}
Let $s\in(0,1)$ and $u:S^{n}_+ \rightarrow \R$ with $u\in H^{1}(S^n_+,\theta_n^{1-2s} )$. Then there exists a constant $C=C(s,n)>0$ such that for all $\tau>1$
\begin{align*}
\|u\|_{L^2(S^{n-1})} \leq C (\tau^{1-s} \|\theta^{\frac{1-2s}{2}}_n u\|_{L^2(S^{n}_+)} + \tau^{-s} \|\theta_n^{\frac{1-2s}{2}} \nabla_{S^n} u\|_{L^2(S^n_+)} ).
\end{align*}
\end{prop}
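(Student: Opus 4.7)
The plan is to reduce the statement to a one-dimensional weighted trace interpolation and then integrate over the transverse $S^{n-1}$ directions.

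First, I parametrize $S^n_+$ by $(\phi,\omega)\in[0,\pi/2]\times S^{n-1}$, where $\phi$ denotes the geodesic angle from the north pole (so $\theta_n = \cos\phi$) and $\omega$ parametrizes $S^{n-1}$. The trace onto $S^{n-1}$ is taken at $\phi=\pi/2$. Introducing $t=\pi/2-\phi$, near $t=0$ one has $\theta_n=\sin t$ and $\sin^{n-1}\phi=\cos^{n-1}t$, so the quantity $\theta_n^{1-2s}\sin^{n-1}\phi$ is comparable to $t^{1-2s}$ on $[0,1/2]$, with constants depending only on $s$ and $n$. The volume element on $S^n$ becomes $\sin^{n-1}\phi\, d\phi\, d\omega = \cos^{n-1}t\, dt\, d\omega$, and one has the pointwise bound $|\partial_\phi u|^2 \leq |\nabla_{S^n} u|^2$.

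Second, I establish the following one-dimensional estimate: for $v\in C^1([0,\pi/2])$ and $\tau>1$,
\begin{equation*}
|v(0)|^2 \leq C\tau^{2(1-s)}\int_0^{1/\tau} t^{1-2s}|v(t)|^2\, dt + C\tau^{-2s}\int_0^{1/\tau} t^{1-2s}|v'(t)|^2\, dt.
\end{equation*}
The starting point is the identity $v(0)=v(t)-\int_0^t v'(r)\, dr$. A weighted Cauchy--Schwarz inequality, using $\int_0^t r^{2s-1}\, dr = t^{2s}/(2s)$ (here $s>0$ is essential), gives the pointwise bound
\begin{equation*}
|v(0)|^2 \leq 2|v(t)|^2+\frac{t^{2s}}{s}\int_0^t r^{1-2s}|v'(r)|^2\, dr.
\end{equation*}
I then average this inequality against the probability density $\rho(t)=(2-2s)\tau^{2-2s}t^{1-2s}$ on $[0,1/\tau]$. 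The normalization $\int_0^{1/\tau}\rho\, dt=1$ is straightforward, and the two moment computations $\int_0^{1/\tau}\rho(t)\, dt = 1$ and $\int_0^{1/\tau}\rho(t)\, t^{2s}\, dt = (1-s)\tau^{-2s}$ yield the claimed weights $\tau^{2(1-s)}$ and $\tau^{-2s}$ after using the crude enlargement $\int_0^t r^{1-2s}|v'|^2\, dr \leq \int_0^{1/\tau} r^{1-2s}|v'|^2\, dr$ in the double integral.

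Third, I apply the 1D estimate pointwise in $\omega\in S^{n-1}$ to the function $v_\omega(t):=u(\pi/2-t,\omega)$, and then integrate in $\omega$. Using Fubini, the equivalence of $\cos^{1-2s}\phi\sin^{n-1}\phi$ with $t^{1-2s}$ on $[0,1/\tau]\subset[0,1/2]$ (recall $\tau>1$), and $|\partial_\phi u|\leq |\nabla_{S^n} u|$, I arrive at
\begin{equation*}
\|u\|_{L^2(S^{n-1})}^2 \leq C\tau^{2(1-s)}\|\theta_n^{\frac{1-2s}{2}} u\|_{L^2(S^n_+)}^2 + C\tau^{-2s}\|\theta_n^{\frac{1-2s}{2}}\nabla_{S^n} u\|_{L^2(S^n_+)}^2.
\end{equation*}
Taking square roots and using $\sqrt{a^2+b^2}\leq a+b$ yields the claim for smooth $u$; a standard density argument extends the inequality to all of $H^1(S^n_+,\theta_n^{1-2s})$.

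The main (modest) obstacle is keeping the exponents of $\tau$ straight in the averaging step. They are dictated by the natural scaling: the weight $t^{1-2s}$ forces $\rho(t)\sim \tau^{2-2s}t^{1-2s}$, and the two moments of $\rho$ against $1$ and $t^{2s}$ determine the two $\tau$-exponents. Everything else is elementary.
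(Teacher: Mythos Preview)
Your proof is correct and takes a genuinely different route from the paper's argument. The paper first quotes the trace inequality
\[
\|w\|_{L^2(\R^n)} \leq C\bigl(\|x_{n+1}^{\frac{1-2s}{2}} w\|_{L^2(\R^{n+1}_+)} + \|x_{n+1}^{\frac{1-2s}{2}} \nabla w\|_{L^2(\R^{n+1}_+)}\bigr)
\]
for the Caffarelli--Silvestre extension (taken from \cite{RS17}), obtains the $\tau$-exponents by the dilation $x\mapsto \tau^{-1}x$, and then transfers the half-space estimate to $S^n_+$ by extending $u$ zero-homogeneously off the sphere and multiplying by a radial cutoff. Your argument instead works intrinsically on $S^n_+$: you reduce to a one-dimensional weighted trace bound in the angular variable $t=\pi/2-\phi$, prove that bound by the elementary averaging of $|v(0)|^2\leq 2|v(t)|^2 + s^{-1}t^{2s}\int_0^t r^{1-2s}|v'|^2\,dr$ against the probability density $(2-2s)\tau^{2-2s}t^{1-2s}$ on $[0,1/\tau]$, and then integrate over $S^{n-1}$ using the comparability $\theta_n^{1-2s}\sin^{n-1}\phi \sim t^{1-2s}$ near the boundary. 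This is more self-contained (no external trace lemma) and in fact yields the slightly sharper localization of the right-hand side to a collar of width $1/\tau$ around $\partial S^n_+$; the paper's approach, by contrast, makes the origin of the exponents $1-s$ and $-s$ transparent as a pure scaling effect.
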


\begin{proof}
We argue in two steps.\\

\emph{Step 1: Derivation of a whole space estimate.} Let $w \in H^{s}(\R^n)\cap H^{1}(\R^{n+1}_+, x_{n+1}^{1-2s})$. Then trace estimates in the space $H^{1}(\R^{n+1}_+, x_{n+1}^{1-2s})$ (c.f. for instance Lemma 4.4 in \cite{RS17}) imply
\begin{equation}
\label{eq:trace}
\|w\|_{L^2(\R^n)}\leq C(\|x_{n+1}^{\frac{1-2s}{2}} w\|_{L^2(\R^{n+1}_+)} 
+ \|x_{n+1}^{\frac{1-2s}{2}} \nabla w \|_{L^2(\R^{n+1}_+)}).
\end{equation}
Starting from \eqref{eq:trace}, scaling $x$ by $\tau^{-1}$ with $\tau>0$ (i.e., $x\to \tau^{-1}x$), we then obtain
\begin{align}
\label{eq:interpol_add}
\|w\|_{L^2(\R^n)}
\leq C( \tau^{1-s} \|x_{n+1}^{\frac{1-2s}{2}} w\|_{L^2(\R^{n+1}_+)} + \tau^{-s} \|x_{n+1}^{\frac{1-2s}{2}} \nabla w\|_{L^2(\R^{n+1}_+)}).
\end{align}

\emph{Step 2: Conclusion.}
Considering $u \in H^1(S^{n}_+, \theta_n^{1-2s})$, we first extend this function zero homogeneously into a neighbourhood of $S^n_+$ and multiply it by a cut-off function, i.e. we define $w(x):= \eta(x) u(\frac{x}{|x|})$, where $\eta(x) = 1$ if $|x| \in (1/2,3/2)$ and $\eta(x) = 0$ if $|x|\in (0,1/4)\cup (2,\infty)$. The resulting compactly supported function still satisfies $w\in H^{1}(\R^{n+1}_+, x_{n+1}^{1-2s})$ and further has the property that
\begin{align*}
\|x_{n+1}^{\frac{1-2s}{2}}\nabla w\|_{L^2(\R^{n+1}_+)}
&\leq C( \|\theta_n^{\frac{1-2s}{2}} \nabla_{S^n} u\|_{L^2(S^n_+)} + 
\|\theta_n^{\frac{1-2s}{2}} u\|_{L^2(S^n_+)}),\\
\|x_{n+1}^{\frac{1-2s}{2}} w\|_{L^2(\R^{n+1}_+)}
&\leq C \|\theta_n^{\frac{1-2s}{2}} u\|_{L^2(S^n_+)},\\
\|u\|_{L^2(S^{n-1})} &\leq C \|w\|_{L^2(\R^n)}.
\end{align*}
Inserting these into \eqref{eq:interpol_add} and choosing $\tau \geq \tau_0 > 1$ (for some uniform $\tau_0$) then implies
\begin{align*}
\|u\|_{L^2(S^{n-1})} 
& \leq C \|w\|_{L^2(\R^{n})}
\leq C(\tau^{1-s} \|x_{n+1}^{\frac{1-2s}{2}} w\|_{L^2(\R^{n+1}_+)} + \tau^{-s}\|x_{n+1}^{\frac{1-2s}{2}} \nabla w\|_{L^2(\R^{n+1})})\\
&\leq C(\tau^{1-s}(1+\tau^{-1}) \|\theta_n^{\frac{1-2s}{2}} u \|_{L^2(S^n_+)} + \tau^{-s} \|\theta_n^{\frac{1-2s}{2}} u\|_{L^2(S^n_+)} ).
\end{align*}
Using that $\tau \geq 1$ then implies the claimed estimate.
\end{proof}

\section{Carleman Inequalities}
\label{eq:Carl}

In the following two sections, we prove the Carleman estimates, which provide the main tools in deriving the decay estimates of Theorems \ref{thm:Landis_Diff} and \ref{thm:Landis_No_Diff}. 

\subsection{A Carleman inequality under differentiability assumptions}
\label{sec:Carl_Diff}

We begin with an estimate in the setting of differentiable potentials. Here we ``include" the potential into the estimate, which allows us to obtain better boundary contributions. This is however at the expense of requiring radial differentiability properties on the potential. It corresponds to a similar argument from \cite{M91} for $s=1$ in the case of differentiable potentials.

\begin{thm}
\label{thm:Carl_Diff}
Let $s\in (0,1)$ and let $\tilde{u}\in H^{1}(\R^{n+1}_+, x_{n+1}^{1-2s})$ with $\supp(\tilde{u}) \subset \R^{n+1}_+ \setminus B_1^+$ be a solution to
\begin{align*}
\nabla \cdot x_{n+1}^{1-2s} \nabla \tilde{u} & = f \mbox{ in } \R^{n+1}_+,\\
\lim\limits_{x_{n+1} \rightarrow 0} x_{n+1}^{1-2s} \p_{n+1} \tilde{u }& = V \tilde{u} \mbox{ on } \R^n \times \{0\},
\end{align*}
where $f\in L^{2}(\R^{n+1}_+, x_{n+1}^{2s-1})$ with compact support in $\R^{n+1}_+$, $V \in C^1_r(\R^{n}\times \{0\})$, i.e. $V\in C^{0}(\R^n \times \{0\})$ and $x\cdot \nabla V$ exists.
Let further $\phi(x) = |x|^{\alpha}$ for $\alpha \geq 1$. Then there exists a constant $C>1$ such that for all $\tau \geq \tau_0>1$ it holds
\begin{align}
\label{eq:Carl_cart_Diff}
\begin{split}
&\tau^{3} \|e^{\tau \phi} |x|^{\frac{3 \alpha}{2}-1} x_{n+1}^{\frac{1-2s}{2}} \tilde{u}\|_{L^2(\R^{n+1}_+)}^2 + \tau \|e^{\tau \phi} |x|^{\frac{\alpha}{2}}x_{n+1}^{\frac{1-2s}{2}}\nabla \tilde{u}\|_{L^2(\R^{n+1}_+)}^2\\
&\leq C \left(\| e^{\tau \phi} x_{n+1}^{\frac{2s-1}{2}} |x| f\|_{L^2(\R^{n+1}_+)}^2  \right. \\
& \quad \left. + \tau \| e^{\tau \phi} |x|^{\frac{\alpha}{2}}  |(x\cdot \nabla V)|^{\frac{1}{2}}\tilde{u}\|_{L^2(\R^n \times \{0\})}^2 
+ \tau \|e^{\tau \phi} |x|^{\frac{\alpha}{2}}|V|^{\frac{1}{2}} \tilde{u}\|_{L^2(\R^n \times \{0\})}^2 
\right).
\end{split}
\end{align}
\end{thm}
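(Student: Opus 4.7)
The plan is to adapt the classical positive-commutator approach to the degenerate elliptic operator $\nabla \cdot x_{n+1}^{1-2s}\nabla$. First, I would pass to conformal polar coordinates $r = |x|$, $\theta = x/|x| \in S^n_+$, and then to the logarithmic variable $t = \log r$, which makes the dilation invariance of the principal part manifest. After a conformal rescaling $w(t,\theta) := e^{\tau \phi(e^t)} e^{\gamma t} \tilde{u}(e^t \theta)$ with $\gamma = \gamma(n,s)$ chosen to eliminate a zeroth-order $t$-derivative, the equation $\nabla \cdot x_{n+1}^{1-2s}\nabla \tilde{u} = f$ becomes a conjugated equation $L_\tau w = g$ on the cylinder $\mathbb{R} \times S^n_+$ equipped with the weighted measure $\theta_n^{1-2s}\, dt\, d\theta$, while $\lim_{x_{n+1} \to 0} x_{n+1}^{1-2s}\partial_{n+1} \tilde{u} = V \tilde{u}$ translates into a corresponding boundary condition on $\mathbb{R}\times \partial S^n_+$ for $w$.

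\textbf{Commutator computation.} I would then split $L_\tau = S_\tau + A_\tau$ into its symmetric and antisymmetric parts relative to this weighted inner product, yielding the identity
\[
\|L_\tau w\|^2 = \|S_\tau w\|^2 + \|A_\tau w\|^2 + ([S_\tau, A_\tau] w, w) + \mathcal{B},
\]
where $\mathcal{B}$ collects all boundary contributions produced by the required integrations by parts. The bulk commutator $([S_\tau, A_\tau] w, w)$ should be coercive — using that $\phi(t) = e^{\alpha t}$ is convex in $t$ since $\alpha \geq 1$ — and produce, after undoing the conformal rescaling and returning to Cartesian variables via $r = e^t$, exactly the two left-hand terms of \eqref{eq:Carl_cart_Diff}, namely $\tau^3 \|e^{\tau\phi} |x|^{3\alpha/2 - 1} x_{n+1}^{(1-2s)/2} \tilde{u}\|_{L^2}^2$ and $\tau \|e^{\tau\phi} |x|^{\alpha/2} x_{n+1}^{(1-2s)/2} \nabla \tilde{u}\|_{L^2}^2$, together with $\|e^{\tau\phi} x_{n+1}^{(2s-1)/2}|x| f\|_{L^2}^2$ on the right from the $\|L_\tau w\|^2$ term.

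\textbf{Boundary terms and the role of $x\cdot\nabla V$.} Substituting the Neumann condition $\lim x_{n+1}^{1-2s} \partial_{n+1} \tilde{u} = V\tilde{u}$ inside $\mathcal{B}$ immediately produces contributions of the form $\tau \int e^{2\tau \phi} |x|^\alpha |V| |\tilde{u}|^2\, dx'$, which matches the last right-hand term of \eqref{eq:Carl_cart_Diff}. The more delicate point is that the antisymmetric part $A_\tau$ contains the radial vector field $\partial_t$, and when it is integrated by parts on the boundary against the expression $V |w|^2$, the Leibniz rule forces a factor $\partial_t V = (x \cdot \nabla V)|_{x_{n+1}=0}$ to appear — this is precisely the source of the $x \cdot \nabla V$ term in \eqref{eq:Carl_cart_Diff} and the reason for requiring $V \in C^1_r$. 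Residual spherical boundary terms of the form $\|e^{\tau\phi}|x|^{\alpha/2} \tilde{u}\|_{L^2(\mathbb{R}^n\times\{0\})}$, arising from tangential integration by parts on $S^n_+$, are absorbed into the positive bulk commutator via the bulk-boundary interpolation estimate of Proposition~\ref{prop:interpol} applied shell-by-shell, with $\tau$ acting as the frequency parameter; this absorption is where the threshold $\tau \geq \tau_0 > 1$ is used.

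\textbf{Main obstacle.} The principal difficulty will be the delicate sign bookkeeping of the boundary terms on $\{\theta_n = 0\}$, where the Muckenhoupt weight $\theta_n^{1-2s}$ degenerates: one must simultaneously track the degeneracy, the radial commutator structure, and the interaction with the potential, ensuring that every boundary term not already appearing on the right-hand side of \eqref{eq:Carl_cart_Diff} either carries a favorable sign or is dominated via Proposition~\ref{prop:interpol}. The hypothesis $V \in C^1_r$ is used essentially at precisely this step, which is what permits the improved boundary contributions promised in the informal statement at the start of the subsection.
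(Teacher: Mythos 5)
Your outline follows essentially the same route as the paper: conformal polar coordinates $x=e^t\theta$ with the rescaling $e^{\frac{n-2s}{2}t}$, conjugation with $e^{\tau\varphi}$, $\varphi(t)=e^{\alpha t}$, splitting into symmetric and antisymmetric parts, positivity of the commutator from the convexity of $e^{\alpha t}$, and—crucially—the integration by parts in $t$ of the boundary term $\tau(\tilde V\,\cdot\,,\varphi'\p_t\,\cdot\,)_0$ coming from the Neumann condition, which is exactly how the $x\cdot\nabla V$ term and the hypothesis $V\in C^1_r$ enter in the paper. Two points in your sketch deviate from (and are less accurate than) the actual argument. First, the commutator $[S,A]$ for this weight only produces the $\tau^3$-term and the \emph{radial} derivative $\p_t$; it does not control the spherical part of the gradient. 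The paper recovers $\nabla_{S^n}$ by an additional integration by parts against the symmetric part (the estimate involving $\tau|(S\bar v,\varphi'\bar v)|$), which generates one more boundary term through the co-normal derivative $\theta_n^{1-2s}\p_\nu$—again a $V$-weighted term, consistent with \eqref{eq:Carl_cart_Diff}. As written, your claim that the commutator alone yields both left-hand terms would leave the full gradient bound unproved, so this auxiliary step needs to be added. Second, no bulk-boundary interpolation is used in the proof of this theorem: after the bookkeeping just described, \emph{every} boundary contribution carries a factor of $\tilde V$ or $\p_t\tilde V$ (there are no ``residual'' boundary terms in $\|e^{\tau\phi}|x|^{\alpha/2}\tilde u\|_{L^2(\R^n\times\{0\})}$ free of $V$), and these are simply kept on the right-hand side—this is precisely the meaning of ``including the potential in the estimate.'' The absorption via Proposition~\ref{prop:interpol}, with $\tau$ as the frequency parameter, is deferred to the application in the proof of Theorem~\ref{thm:Landis_Diff} (and to the non-differentiable Carleman estimate), where $|q|\leq 1$ and $|x\cdot\nabla q|\leq 1$ are available; in the Carleman proof itself the largeness of $\tau_0$ is used only to absorb bulk error terms such as $\tau(\varphi''''\bar v,\bar v)$ and the $c_0$-terms. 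Neither issue is fatal—your interpolation step would in fact work if such unweighted boundary terms appeared—but the proof as proposed needs the extra symmetric-part step to be complete.
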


\begin{proof}
We first pass to conformal polar coordinates. To this end, we define $x= e^{t} \theta$ with $t\in \R$, $\theta \in S^n_+$, set $\bar{u}(t,\theta) = e^{\frac{(n-2s)}{2}t}\tilde u(e^t \theta)$ and multiply the resulting equation for $\bar{u}$ with $e^{\frac{(n-2s)}{2}t}$. This then leads to the equation
\begin{align*}
\begin{split}
\left( \theta_n^{1-2s} \p_t^2 + \nabla_{S^n}\cdot \theta_n^{1-2s} \nabla_{S^n} - \theta_n^{1-2s} \frac{(n-2s)^2}{4} \right) \bar{u} & = \tilde{f} \mbox{ in } S^{n}_+ \times \R,\\
\lim\limits_{\theta_n \rightarrow 0} \theta_n^{1-2s} \p_\nu \bar{u} &= \tilde{V} \bar{u} \mbox{ on } S^{n-1} \times \R.
\end{split}
\end{align*}
Here $\tilde{f}(t,\theta) := e^{\frac{(n+2+2s)}{2}t} f(e^t \theta), \tilde{V}(t,\theta) := e^{2st} V(e^t \theta) $ and $\p_\nu=\nu\cdot\nabla_{S^n}$ with $\nu=(0,\cdots,0,1)$. Next, setting $v= e^{\tau \varphi(t)} \bar{u}$ and $\bar f=e^{\tau \varphi(t)}\tilde f$ (with $\varphi(t)=\phi(e^t \theta) = e^{\alpha t}$), we seek to prove the following estimate, which (after returning to Cartesian coordinates) implies \eqref{eq:Carl_cart_Diff}:
\begin{align}
\label{eq:Carl_conf_Diff}
\begin{split}
&\tau^{3}\|\varphi' |\varphi''|^{\frac{1}{2}} \theta_n^{\frac{1-2s}{2}}  v\|_{L^2(S^{n}_+\times \R)}^2 
 + \tau \||\varphi''|^{\frac{1}{2}} \theta_n^{\frac{1-2s}{2}} \nabla_{S^{n}} v\|_{L^2(S^{n}_+\times \R)}^2
 + \tau \||\varphi''|^{\frac{1}{2}} \theta_n^{\frac{1-2s}{2}} \p_t v\|_{L^2(S^{n}_+ \times \R)}^2 
 \\
&\leq C \left( \|\theta_n^{\frac{2s-1}{2}} \bar{f}\|_{L^2(S^n_+ \times \R)}^2 + \tau \| |\varphi'|^{\frac{1}{2}} |\p_t \tilde{V}|^{\frac{1}{2}} v \|_{L^2(S^{n-1}\times \R)}^2 + \tau \| |\varphi''|^{\frac{1}{2}} |\tilde{V}|^{\frac{1}{2}} v \|_{L^2(S^{n-1}\times \R)}^2  \right).
\end{split}
\end{align}

In order to infer this, we consider the function $\overline{v}:=\theta_n^{\frac{1-2s}{2}} v$, which satisfies the equation
\begin{align*}
L_\varphi\bar v:=\left(  \p_t^2 + \tilde{\D}_{S^n} - \frac{(n-2s)^2}{4}  + \tau^2 |\varphi'|^2 - 2\tau \varphi' \p_t - \tau \varphi''  \right) \bar{v} & = \theta^{\frac{2s-1}{2}}\bar{f} \mbox{ in } S^{n}_+ \times \R,\\
\lim\limits_{\theta_n \rightarrow 0} \theta_n^{1-2s} \p_\nu \theta_n^{\frac{2s-1}{2}} \bar{v} & = \tilde{V} \theta_n^{\frac{2s-1}{2}} \bar{v} \mbox{ on } S^{n-1} \times \R,
\end{align*}
where $\tilde{\D}_{S^n}:= \theta_n^{\frac{2s-1}{2}} \nabla_{S^n} \cdot \theta_n^{1-2s} \nabla_{S^n} \theta_n^{\frac{2s-1}{2}}$.
We split the bulk operator into its (formally) symmetric and antisymmetric parts:
\begin{align*}
S & = \p_t^2 + \tilde{\D}_{S^n} + \tau^2 |\varphi'|^2 - \frac{(n-2s)^2}{4} ,\\
A & = -2 \tau \varphi' \p_t - \tau \varphi'' .
\end{align*}
Then, using that $S,A$ are only symmetric and antisymmetric up to boundary contributions, we obtain
\begin{align}
\label{eq:L2expand}
\|L_{\varphi} \bar{v}\|^2 = 
\|S \bar{v}\|^2 + \|A \bar{v}\|^2 + ([S,A]\bar{v},\bar{v}) +  4 \tau (\tilde{V} \theta_n^{\frac{2s-1}{2}} \bar{v}, \varphi' \theta_n^{\frac{2s-1}{2}} \p_t \bar{v})_{0}+2 \tau (\tilde{V} \theta_n^{\frac{2s-1}{2}} \bar{v}, \varphi'' \theta_n^{\frac{2s-1}{2}} \bar{v})_{0}.
\end{align}
For abbreviation, we have here set
\begin{align}
\label{eq:notation}
\begin{split}
\|\cdot\| & = \|\cdot \|_{L^2(S^n_+ \times \R)}, \
(\cdot, \cdot):=(\cdot, \cdot)_{L^2(S^{n}_+ \times \R)},\\
\|\cdot \|_0 &:= \| \cdot \|_{L^2(S^{n-1} \times \R)},\
(\cdot, \cdot)_0 := (\cdot, \cdot)_{L^2(S^{n-1}\times \R)}.
\end{split}
\end{align}
In the sequel, we will use this notation frequently. The bulk terms are bounded as usual. More precisely, the commutator reads
\begin{align}
\label{eq:bulk}
\begin{split}
([S,A]\bar{v},\bar{v})
&= 4 \tau^3( \varphi'' |\varphi'|^2 \bar{v},\bar{v}) - 4\tau (\varphi'' \p_t^2 \bar{v},\bar{v}) - 4 \tau(\varphi''' \p_t \bar{v},\bar{v}) - \tau (\varphi'''' \bar{v},\bar{v} )\\
&= 4 \tau^3( \varphi'' |\varphi'|^2 \bar{v},\bar{v}) + 4\tau (\varphi'' \p_t \bar{v}, \p_t \bar{v}) - \tau (\varphi'''' \bar{v},\bar{v} )\\
& \geq  2 \tau^3( \varphi'' |\varphi'|^2 \bar{v},\bar{v}) + 4\tau (\varphi'' \p_t \bar{v}, \p_t \bar{v}) .
\end{split}
\end{align}
In the last line, we used the growth of $\varphi$ to absorb the last term into the first term on the right hand side (for a sufficiently large choice of $\tau_0>0$). This yields the $L^2$ and the radial part of the gradient $L^2$ bulk contributions. 

In order to also obtain the full gradient estimate, we exploit the symmetric part of the operator. By an integration by parts argument we infer for some small constant $c_0$
\begin{align}
\label{eq:sph_grad}
\begin{split}
c_0 \tau \||\varphi'|^{\frac{1}{2}} \theta_n^{\frac{1-2s}{2}} \nabla_{S^n} \theta_n^{\frac{2s-1}{2}} \bar{v}\|^2
&\leq c_0 \tau |(S \bar{v}, \varphi' \bar{v})|
+ c_0 \tau \||\varphi''|^{\frac{1}{2}} \p_t \bar{v}\|^2
+ c_0\tau^3\||\varphi'|^{\frac{3}{2}}\bar v\|^2\\
& \quad + c_0 \tau \left( \frac{n-2s}{2} \right)^{2} \||\varphi'|^{\frac{1}{2}} \bar{v}\|^2 + c_0\tau |(\theta_n^{1-2s}\p_{\nu} \theta_n^{\frac{2s-1}{2}}\bar{v}, \varphi'\theta_n^{\frac{2s-1}{2}} \bar{v})_0|\\
&\leq c_0 \tau |(S \bar{v}, \varphi'\bar{v})|
+ c_0 \tau \||\varphi''|^{\frac{1}{2}} \p_t \bar{v}\|^2
+ c_0 \tau^3 \|\varphi'|\varphi''|^{\frac{1}{2}} \bar{v}\|^2\\
& \quad + c_0 \tau \left( \frac{n-2s}{2} \right)^{2} \||\varphi'|^{\frac{1}{2}} \bar{v}\|^2 + c_0 \tau |(\tilde{V} \theta_n^{\frac{2s-1}{2}} \bar{v},\varphi' \theta_n^{\frac{2s-1}{2}}  \bar{v})_0|.
\end{split}
\end{align}
Combining \eqref{eq:L2expand}, \eqref{eq:bulk}, \eqref{eq:sph_grad} yields 
\begin{align*}
&c_0 \tau \||\varphi'|^{\frac{1}{2}} \theta_n^{\frac{1-2s}{2}} \nabla_{S^n} \theta_n^{\frac{2s-1}{2}} \bar{v}\|^2
+ 2 \tau^3 \|\varphi'|\varphi''|^{\frac{1}{2}} \bar{v}\|^2
+ 4 \tau \||\varphi''|^{\frac{1}{2}} \p_t \bar{v}\|^2
+ \|S \bar{v}\|^2 + \|A \bar{v}\|^2\\
&\leq c_0 \tau |(S \bar{v}, \varphi'\bar{v})| + c_0 \tau \||\varphi''|^{\frac{1}{2}} \p_t \bar{v}\|^2
+ c_0 \tau^3 \|\varphi'|\varphi''|^{\frac{1}{2}} \bar{v}\|^2\\
& \quad + c_0 \tau \left( \frac{n-2s}{2} \right)^{2} \||\varphi'|^{\frac{1}{2}} \bar{v}\|^2 + c_0 \tau |(\tilde{V} \theta_n^{\frac{2s-1}{2}} \bar{v}, \varphi'\theta_n^{\frac{2s-1}{2}}  \bar{v})_0|\\
& \quad + \|L_{\varphi}\bar{v}\|^2 + 4 \tau |(\tilde{V} \theta_n^{\frac{2s-1}{2}}\bar{v}, \varphi' \theta_n^{\frac{2s-1}{2}} \p_t \bar{v})_0|+ 2 \tau |(\tilde{V} \theta_n^{\frac{2s-1}{2}} \bar{v}, \varphi'' \theta_n^{\frac{2s-1}{2}} \bar{v})_{0}|.
\end{align*} 
Noting that the bulk terms on the right hand side of this estimate can be absorbed into the left hand side, if $c_0$ is chosen sufficiently small and $\tau_0 \geq 1$ sufficiently large, entails
\begin{align}
\label{eq:Carl_bulk_ok}
\begin{split}
&c_0 \tau \||\varphi'|^{\frac{1}{2}} \theta_n^{\frac{1-2s}{2}} \nabla_{S^n} \theta_n^{\frac{2s-1}{2}} \bar{v}\|^2
+  \tau^3 \|\varphi'|\varphi''|^{\frac{1}{2}} \bar{v}\|^2
+  \tau \||\varphi''|^{\frac{1}{2}} \p_t \bar{v}\|^2\\
&\leq \|L_\varphi \bar{v}\|^2 + 4 \tau |(\tilde{V} \theta_n^{\frac{2s-1}{2}}\bar{v}, \varphi' \theta_n^{\frac{2s-1}{2}} \p_t \bar{v})_0|  + 3\tau |(\tilde{V} \theta_n^{\frac{2s-1}{2}} \bar{v}, \varphi''\theta_n^{\frac{2s-1}{2}}  \bar{v})_0|.
\end{split}
\end{align} 
It thus remains to control the boundary contribution involving the $t$-derivative (the other one is already of the desired form, c.f. \eqref{eq:Carl_conf_Diff}).
It is controlled by integrating by parts in $t$ which leads to
\begin{align}
\label{eq:boundary}
\begin{split}
\tau \left| (\tilde{V} \theta_n^{\frac{2s-1}{2}} \bar{v}, \varphi' \theta_n^{\frac{2s-1}{2}} \p_t \bar{v})_0 \right|
& \leq \tau \left| ((\p_t \tilde{V}) \theta_n^{\frac{2s-1}{2}} \bar{v}, \varphi' \theta_n^{\frac{2s-1}{2}} \bar{v})_0 \right| \\
& \quad + \tau \left| ( \tilde{V} \theta_n^{\frac{2s-1}{2}} \bar{v}, \varphi'' \theta_n^{\frac{2s-1}{2}} \bar{v})_0 \right|.
\end{split}
\end{align}
Returning from $\bar{v}$ to $v$ shows that the boundary contributions on the right hand side of \eqref{eq:boundary} are exactly controlled by the boundary contributions in \eqref{eq:Carl_conf_Diff}. 
Thus, finally, combining \eqref{eq:bulk} with \eqref{eq:boundary} yields \eqref{eq:Carl_conf_Diff}, which concludes the proof for Theorem \ref{thm:Carl_Diff}. 
\end{proof}

\subsection{A Carleman inequality without differentiability assumptions}
\label{sec:Carl_No_Diff}

In this section we prove a similar Carleman estimate as in the previous section. However, in contrast to the previous estimate, we now do not presuppose any differentiability properties on the potential $V$. As in the classical case $s=1$ this implies that we can no longer treat the potential $V$ as part of the operator, but instead have to deal with it perturbatively. This however does no longer allow us to distinguish between the complex (system) and the real valued (scalar) case. Hence, we can only derive weaker estimates. 

\begin{thm}
\label{thm:Carl_No_Diff}
Let $s\in (0,1)$ and let $\tilde{u}\in H^{1}(\R^{n+1}_+, x_{n+1}^{1-2s})$ with $\supp(\tilde{u}) \subset B_{R}^+ \setminus B_1^+$ for some constant $R>1$ be a solution to
\begin{align*}
\nabla \cdot x_{n+1}^{1-2s} \nabla \tilde{u} & = f\; \mbox{ in }\; \R^{n+1}_+,\\
\lim\limits_{x_{n+1} \rightarrow 0} x_{n+1}^{1-2s} \p_{n+1} \tilde{u }& = V \tilde{u}\; \mbox{ on }\; \R^n \times \{0\},
\end{align*}
where $f\in L^2(\R^{n+1}_+, x_{n+1}^{2s-1})$ with compact support in $\R^{n+1}_+$ and $V \in L^{\infty}(\R^n)$. 
Let further $\phi(x) = |x|^{\alpha}$ for some $\alpha \ge 1$.

Then there exist constants $C, \tau_0 >1$ such that for all $\tau \geq \tau_0$ it holds
\begin{align*}
&\tau^{3} \|e^{\tau \phi}|x|^{\frac{3\alpha}{2}-1} x_{n+1}^{\frac{1-2s}{2}} \tilde{u}\|_{L^2(\R^{n+1}_+)}^2 + \tau \|e^{\tau \phi} |x|^{\frac{\alpha}{2}}x_{n+1}^{\frac{1-2s}{2}} \nabla \tilde{u}\|_{L^2(\R^{n+1}_+)}^2\\
&\leq C \left(\| e^{\tau \phi} x_{n+1}^{\frac{2s-1}{2}} |x| f\|_{L^2(\R^{n+1}_+)}^2 +  \tau^{2-2s} \|e^{\tau \phi}  V |x|^{(1-\alpha)s} \tilde{u}\|_{L^2(\R^n \times \{0\})}^2 
\right).
\end{align*}
\end{thm}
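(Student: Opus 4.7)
The plan is to follow the proof of Theorem~\ref{thm:Carl_Diff} up to the appearance of the boundary terms involving $V$, and then diverge to treat those terms \emph{without} integrating by parts in $t$ against $V$. Specifically, I would perform the same conformal change $x=e^t\theta$, rescaling $\bar u=e^{\frac{n-2s}{2}t}\tilde u$, conjugation $v=e^{\tau\varphi}\bar u$ with $\varphi(t)=e^{\alpha t}$, and symmetrization $\bar v=\theta_n^{\frac{1-2s}{2}}v$; then, after splitting $L_\varphi$ into its formally symmetric and antisymmetric parts and carrying out the identical commutator, spherical-gradient, and absorption computations as in Theorem~\ref{thm:Carl_Diff}, I would arrive at the exact analogue of \eqref{eq:Carl_bulk_ok}, namely
\begin{align*}
&c_0\tau\|(\varphi')^{\frac12}\theta_n^{\frac{1-2s}{2}}\nabla_{S^n}\theta_n^{\frac{2s-1}{2}}\bar v\|^2
+\tau^3\|\varphi'|\varphi''|^{\frac12}\bar v\|^2
+\tau\||\varphi''|^{\frac12}\partial_t\bar v\|^2\\
&\leq \|L_\varphi\bar v\|^2+4\tau|(\tilde Vv,\varphi'\partial_tv)_0|+3\tau|(\tilde Vv,\varphi''v)_0|,
\end{align*}
with $\tilde V=e^{2st}V(e^t\theta')$. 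The proof of Theorem~\ref{thm:Carl_Diff} next used integration by parts in $t$, c.f.~\eqref{eq:boundary}, to convert these boundary contributions into terms of the form $\|(\partial_t\tilde V)^{1/2}\cdots\|_0^2$, which is no longer legitimate when $V$ is only $L^{\infty}$.

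Instead, I would estimate both boundary terms by weighted Young's inequalities using a $t$-dependent weight $w(t)=e^{-\alpha st}$ and parameter $\mu=\tau^{1-2s}$. A direct computation shows that the summand involving $V$ in each Young split gives exactly the target boundary term $\tfrac12\tau^{2-2s}\|e^{\tau\phi}V|x|^{(1-\alpha)s}\tilde u\|_{L^2(\R^n)}^2$ after returning to Cartesian coordinates, while the residual error takes the form $\mathrm{Err}=C\alpha^2\tau^{2s}\|e^{\alpha(1+s)t}v\|_0^2+C\alpha^2\tau^{2s}\|e^{\alpha(1+s)t}\partial_t v\|_0^2$. The specific choice of $w$ and $\mu$ is forced by requiring that the weight on $V$ match $|x|^{(1-\alpha)s}$ in the statement, which in turn fixes the $\tau^{2-2s}$ factor on the right-hand side.

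The final step is the absorption of $\mathrm{Err}$ into the LHS of the displayed estimate. For this I would invoke Proposition~\ref{prop:interpol} slicewise in $t$ (with interpolation parameter $\tau_1=\tau$) applied to $v(t,\cdot)$ and to $\partial_t v(t,\cdot)$; integrating in $t$ with the appropriate $e^{2\alpha(1+s)t}$-weight and using the support assumption $\supp\tilde u\subset B_R^+\setminus B_1^+$ (which bounds $t\in[0,\log R]$ and permits comparison of unequal $t$-exponents at the price of a polynomial factor in $R$) reduces $\mathrm{Err}$ to bulk norms dominated by the three terms on the LHS of the displayed estimate. The absorption is then effected by choosing $\tau_0$ sufficiently large depending on $n,s,\alpha$ and $R$. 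The main technical hurdle will be the absorption of the $\partial_t v$ summand of $\mathrm{Err}$: a direct application of Proposition~\ref{prop:interpol} to $\partial_t v$ produces a bulk norm $\|\theta_n^{(1-2s)/2}\nabla_{S^n}\partial_t v\|$ not present on the LHS of \eqref{eq:Carl_bulk_ok}, and controlling it requires a Caccioppoli-type estimate for $\partial_t\bar v$ obtained by differentiating the bulk equation in the tangential variable $t$ and exploiting the hitherto unused symmetric contribution $\|S\bar v\|^2$ inside $\|L_\varphi\bar v\|^2$.
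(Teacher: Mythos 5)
Your plan coincides with the paper's up to the conformal change of variables and the commutator identity, but it diverges precisely at the treatment of the $V$--boundary terms, and that is where it has a genuine gap. After your Young splitting the error terms are $C\tau^{2s}\|e^{\alpha(1+s)t}v\|_{L^2(S^{n-1}\times\R)}^2$ and $C\tau^{2s}\|e^{\alpha(1+s)t}\p_t v\|_{L^2(S^{n-1}\times\R)}^2$, while the only bulk quantities on the left of your analogue of \eqref{eq:Carl_bulk_ok} carry the radial weights $\tau^3|\varphi'|^2|\varphi''|\sim\tau^3e^{3\alpha t}$ (zeroth order) and $\tau|\varphi'|\sim\tau e^{\alpha t}$ (first order). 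Running Proposition~\ref{prop:interpol} slicewise with \emph{any} parameter $\tilde\tau(t)$, absorption of even the $v$--term requires simultaneously $\tau^{2s}\tilde\tau^{2-2s}e^{2\alpha(1+s)t}\lesssim\tau^{3}e^{3\alpha t}$ and $\tau^{2s}\tilde\tau^{-2s}e^{2\alpha(1+s)t}\lesssim\tau e^{\alpha t}$, which are compatible only where $e^{\alpha t}\lesssim\tau$ (with your fixed choice $\tilde\tau=\tau$ one even needs $e^{\alpha(1+2s)t}\lesssim\tau$). On the support $t\in[0,\log R]$ this forces $\tau_0\gtrsim R^{\alpha}$, i.e.\ constants depending on $R$. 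That is a strictly weaker statement than Theorem~\ref{thm:Carl_No_Diff} and, more importantly, it is unusable in the intended application: in the proof of Theorem~\ref{thm:Landis_No_Diff} the estimate is applied to $\eta_R\tilde u$ and the limit $R\to\infty$ is taken \emph{at fixed} $\tau$, which an $R$-dependent $\tau_0$ forbids. This weight mismatch is exactly why the paper does not run the commutator estimate on the full solution: it splits $\bar u=u_1+u_2$, loads the entire Neumann datum $\tilde V\bar u$ as an inhomogeneity into the auxiliary elliptic problem \eqref{eq:elliptic} for $u_1$ with the large zeroth-order term $K^2\tau^2|\varphi'|^2$, and the symmetric energy estimate \eqref{eq:test_ell1} then provides bulk terms with the \emph{stronger} weights $\tau^4(\varphi'\varphi'')^2$ and $\tau^2(\varphi'')^2$ against which the interpolation error is absorbed uniformly in $t$ (cf.\ \eqref{eq:ell_bound_b} with $\tilde\tau=e^{\alpha t}\tau$), while $u_2$ has zero Neumann data so the commutator argument produces no boundary terms at all.

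The second, independent gap is the $\p_t v$ boundary error. Even accepting $R$-dependent constants, its slicewise interpolation produces the bulk quantity $\|\theta_n^{\frac{1-2s}{2}}\nabla_{S^n}\p_t v\|$, a second-order term absent from the left-hand side; your proposed remedy (differentiate the equation in $t$ and exploit $\|S\bar v\|^2$ via a Caccioppoli-type bound) is not carried out and is where the scheme breaks down. Extracting the mixed derivative from $\|S\bar v\|$ requires integrations by parts which generate boundary contributions on $S^{n-1}\times\R$ involving the weighted Neumann trace of $\p_t\bar v$, i.e.\ $\p_t(\tilde V\bar u)$ -- precisely the derivative of $V$ that is unavailable for $V\in L^{\infty}$ and whose avoidance was the whole point of departing from \eqref{eq:boundary}; the argument is therefore circular as it stands. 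Note that the paper's splitting never produces a boundary pairing of $V$ against a $t$-derivative of the solution, because the $u_1$-equation is tested with the symmetric multiplier $\tau^2e^{2\tau\varphi}|\varphi''|^2u_1$, so only traces of $u_1$ itself appear; reproducing the theorem along your route would require replacing this step by an essentially different mechanism, not just a technical refinement.
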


\begin{proof}
As in \cite{GRSU18}, we deduce the estimate by a splitting argument. To this end, we first pass to conformal polar coordinates. \\

\emph{Step 1: Conformal coordinates and set-up.} As in the proof of Theorem \ref{thm:Carl_Diff}, we first pass to conformal polar coordinates. With the notation from there, we obtain
\begin{align}
\label{eq:elliptic_conf}
\begin{split}
\left( \theta_n^{1-2s} \p_t^2 + \nabla_{S^n}\cdot \theta_n^{1-2s} \nabla_{S^n} - \theta_n^{1-2s} \frac{(n-2s)^2}{4} \right) \bar{u} & = \tilde{f} \mbox{ in } S^{n}_+ \times \R,\\
\lim\limits_{\theta_n \rightarrow 0} \theta_n^{1-2s} \p_{\nu} \bar{u} = \tilde{V} \bar{u} \mbox{ on } S^{n-1} \times \R.
\end{split}
\end{align}

In order to deduce the desired exponential estimates for this problem, we split the function $\bar{u}$ into two parts $\bar{u}= u_1 + u_2 $. Here $u_1$ is a solution to
\begin{align}
\label{eq:elliptic}
\begin{split}
\left( \theta_n^{1-2s} \p_t^2 + \nabla_{S^n}\cdot \theta_n^{1-2s} \nabla_{S^n} - \theta_n^{1-2s} \frac{(n-2s)^2}{4} - K^2 \tau^2 |\varphi'|^2 \theta_n^{1-2s}  \right) u_1 & = \tilde{f} \mbox{ in } S^{n}_+ \times \R,\\
\lim\limits_{\theta_n \rightarrow 0} \theta_n^{1-2s} \p_{\nu} u_1 = \tilde{V} \bar{u} \mbox{ on } S^{n-1} \times \R.
\end{split}
\end{align}
The constant $K \in \R$ is sufficiently large and is to be determined more precisely later; further $\varphi(t) := \phi(e^t)$. 
We remark that by the Lax-Milgram theorem in $H^{1}(S^n_+ \times \R, \theta_n^{1-2s})$ a unique energy solution to this problem exists. Further, by arguments similar as in the Appendix of  \cite{GRSU18} this function is rapidly decaying at infinity. The equation for $u_2$ follows correspondingly. In order to infer the desired Carleman estimate, we combine elliptic estimates for $u_1$ with the usual commutator estimates for $u_2$. 
We deduce these estimates separately and begin by discussing the elliptic bounds for $u_1$. \\

\emph{Step 2: Elliptic estimates for $u_1$.}
Using the same notational convention as in \eqref{eq:notation} and testing the weak form of the equation \eqref{eq:elliptic} by $\tau^2 e^{2\tau \varphi}|\varphi''|^2 u_1$, we obtain
\begin{align}
\label{eq:test_ell}
\begin{split}
& \tau^2 ( e^{2\tau \varphi}|\varphi''|^2 \theta_n^{1-2s} \p_t  u_1, \p_t u_1 ) 
+ \tau^2 (e^{2 \tau \varphi} |\varphi''|^2 \theta_n^{1-2s} \nabla_{S^n} u_1, \nabla_{S^n} u_1)\\
& + \tau^2 \frac{(n-2s)^2}{4}(e^{2 \tau \varphi} |\varphi''|^2 \theta_n^{1-2s} u_1, u_1) + K^2 \tau^4 (e^{2\tau \varphi} |\varphi'|^2 |\varphi''|^2 \theta_n^{1-2s}u_1, u_1)\\
& = - \tau^2 (e^{2\tau \varphi} \tilde{f}, |\varphi''|^2 u_1 ) 
+ \tau^2 (e^{2 \tau \phi}\tilde{V}\bar{u},|\varphi''|^2 u_1)_0\\
& \quad-2\tau^3 (e^{2\tau \varphi}|\varphi''|^2 \varphi' \theta_n^{1-2s}\p_t u_1, u_1) -2\tau^2(e^{2 \tau \varphi} \varphi''\varphi'''u_1, \theta_n^{1-2s} \p_t u_1).
\end{split}
\end{align}
Choosing $K\geq 1$ sufficiently large and applying Young's inequality, it is thus possible to absorb unsigned contributions from the right hand side into the left hand side. This results in
\begin{align}
\label{eq:test_ell1}
\begin{split}
& \tau^2 \| e^{\tau \varphi} \varphi'' \theta_n^{\frac{1-2s}{2}} \p_t u_1 \|^2
+ \tau^2 \|e^{\tau \varphi} \varphi'' \theta_n^{\frac{1-2s}{2}} \nabla_{S^n}u_1 \|^2
+ \frac{1}{2} K^2 \tau^4 \|e^{\tau \varphi} \varphi'\varphi''  \theta_n^{\frac{1-2s}{2}}u_1\|^2\\
&\leq C(\|e^{\tau \varphi} \theta_n^{\frac{2s-1}{2}} \tilde{f}\|^2 + \tau^{2-2s}\| e^{\tau \varphi}\varphi'' \tilde{V} e^{-\alpha s  t} \overline{u}\|^2_0) + \epsilon \tau^{2+2s} \| e^{\tau \varphi} \varphi'' e^{\alpha s t} u_1 \|^2_0.
\end{split}
\end{align}
Finally, in order to conclude the discussion on the function $u_1$, we bound the boundary contribution $ \tau^{2+2s} \| e^{2\tau \varphi} \varphi'' e^{\alpha s t} u_1 \|_0^2$ by means of the bulk-boundary interpolation estimate from Proposition \ref{prop:interpol}. Recalling that $\varphi(t) = e^{\alpha t}$ and treating the non-spherical variables as constants, we obtain
\begin{align*}
|\varphi''|^2 e^{2 \alpha st} \int\limits_{S^{n-1}} u_1^2 d\theta 
& \leq C (\tilde{\tau}^{2-2s}  |\varphi''|^2 e^{2 \alpha st} \int\limits_{S^{n}_+} \theta_n^{1-2s} u_1^2 d\theta  + \tilde{\tau}^{-2s}  |\varphi''|^2 e^{2 \alpha st} \int\limits_{S^{n}_+} \theta_n^{1-2s} |\nabla_{S^{n}} u_1|^2 d\theta  ).
\end{align*}
Choosing $\tilde{\tau} = e^{\alpha t} \tau$ (such that both the $L^2$ and the gradient contribution obtain radial weights which match the elliptic bulk estimates from \eqref{eq:test_ell1}), we obtain
\begin{align*}
|\varphi''|^2 e^{2 \alpha st} \int\limits_{S^{n-1}} u_1^2 d\theta 
& \leq C (\tau ^{2-2s} |\varphi''|^2 e^{2 \alpha t} \int\limits_{S^{n}_+} \theta_n^{1-2s} u_1^2 d\theta  + \tau^{-2s} |\varphi''|^2 \int\limits_{S^{n}_+} \theta_n^{1-2s} |\nabla_{S^{n}} u_1|^2 d\theta  ).
\end{align*}
Multiplying with $e^{2\tau \varphi}$, using that $\varphi' = \alpha e^{\alpha t}$ and integrating in the radial direction thus implies
\begin{align}
\label{eq:ell_bound_b}
\begin{split}
\tau^{2+2s} \| e^{\tau \varphi} |\varphi''| e^{\alpha s t} u_1 \|^2_0
&\leq C (\tau^{4} \|e^{\tau \varphi}  \theta_n^{\frac{1-2s}{2}}e^{\alpha t}\varphi'' u_1 \|^2 + \tau^2 \|e^{\tau \varphi}  \theta_n^{\frac{1-2s}{2}} \varphi'' \nabla_{S^n} u_1 \|^2)\\
&\leq C (\tau^{4} \|e^{\tau \varphi}  \theta_n^{\frac{1-2s}{2}}\varphi'\varphi'' u_1 \|^2 + \tau^2 \|e^{\tau \varphi}  \theta_n^{\frac{1-2s}{2}} \varphi'' \nabla_{S^n} u_1 \|^2).
\end{split}
\end{align}
Therefore, we may absorb the last boundary term in \eqref{eq:test_ell1} into the left hand side of \eqref{eq:test_ell1} and are left with
\begin{align}
\label{eq:test_ell2}
\begin{split}
& \tau^2 \| e^{\tau \varphi} \varphi'' \theta_n^{\frac{1-2s}{2}} \p_t u_1 \|^2
+ \tau^2 \|e^{\tau \varphi} \varphi'' \theta_n^{\frac{1-2s}{2}} \nabla_{S^n}u_1 \|^2
+ K^2 \tau^4 \|e^{\tau \varphi}\varphi'\varphi''  \theta_n^{\frac{1-2s}{2}}u_1\|^2\\
&\leq C(\|e^{\tau \varphi} \theta_n^{\frac{2s-1}{2}} \tilde{f}\|^2 + \tau^{2-2s}\| e^{\tau \varphi} \varphi'' \tilde{V} e^{-\alpha s  t} \overline{u}\|^2_0).
\end{split}
\end{align}

\emph{Step 3: Commutator estimates for $u_2$.}
Next we deal with the estimate for the function $u_2$, which follows from a commutator estimate similarly as in the proof of Theorem \ref{thm:Carl_Diff}. Indeed, $u_2$ satisfies the equation
\begin{align}
\label{eq:comm_u2}
\begin{split}
\left( \theta_n^{1-2s} \p_t^2 + \nabla_{S^n}\cdot \theta_n^{1-2s} \nabla_{S^n} - \theta_n^{1-2s} \frac{(n-2s)^2}{4}  \right) u_2 & =-  K^2 \tau^2 |\varphi'|^2 \theta_n^{1-2s}u_1 \mbox{ in } S^{n}_+ \times \R,\\
\lim\limits_{\theta_n \rightarrow 0} \theta_n^{1-2s} \p_{\nu} u_1 = 0 \mbox{ on } S^{n-1} \times \R.
\end{split}
\end{align}
In order to deduce the desired exponential estimates from this, we carry out a similar commutator argument as in the proof of Theorem \ref{thm:Carl_Diff}. In this procedure, we note that now the boundary terms drop out due to the vanishing Neumann condition. With this observation and exactly the same commutator bounds as in the proof of Theorem \ref{thm:Carl_Diff}, we therefore obtain the estimate
\begin{align}
\label{eq:comm_u2_2}
\begin{split}
&\tau^3\|e^{\tau \varphi} \varphi'|\varphi''|^{\frac{1}{2}} \theta_n^{\frac{1-2s}{2}} u_2\|^2
+ \tau \|e^{\tau \varphi} |\varphi''|^{\frac{1}{2}}\theta_n^{\frac{1-2s}{2}} \p_t u_2\|^2
+ \tau \|e^{\tau \varphi} |\varphi''|^{\frac{1}{2}} 
\theta_n^{\frac{1-2s}{2}} \nabla_{S^n} u_2 \|^2\\
&\leq C K^4 \tau^4 \| e^{\tau \varphi} |\varphi'|^2\theta_n^{\frac{1-2s}{2}} u_1\|^2.
\end{split}
\end{align}

\emph{Step 4: Conclusion.}
Finally, we combine the estimates from \eqref{eq:test_ell2} and \eqref{eq:comm_u2_2}. By the triangle inequality, this gives
\begin{align*}
&\tau^3\|e^{\tau \varphi} \varphi'|\varphi''|^{\frac{1}{2}} \theta_n^{\frac{1-2s}{2}} \bar{u}\|^2
+ \tau \|e^{\tau \varphi} |\varphi''|^{\frac{1}{2}}\theta_n^{\frac{1-2s}{2}} \p_t \bar{u} \|^2
+ \tau \|e^{\tau \varphi} |\varphi''|^{\frac{1}{2}} 
\theta_n^{\frac{1-2s}{2}} \nabla_{S^n} \bar{u} \|^2\\
& \leq  \tau^3\|e^{\tau \varphi} \varphi'|\varphi''|^{\frac{1}{2}} \theta_n^{\frac{1-2s}{2}} u_1 \|^2
+ \tau \|e^{\tau \varphi} |\varphi''|^{\frac{1}{2}}\theta_n^{\frac{1-2s}{2}} \p_t u_1 \|^2
+ \tau \|e^{\tau \varphi} |\varphi''|^{\frac{1}{2}} 
\theta_n^{\frac{1-2s}{2}} \nabla_{S^n} u_1 \|^2\\
& \quad + \tau^3\|e^{\tau \varphi} \varphi'|\varphi''|^{\frac{1}{2}} \theta_n^{\frac{1-2s}{2}} u_2\|^2
+ \tau \|e^{\tau \varphi} |\varphi''|^{\frac{1}{2}}\theta_n^{\frac{1-2s}{2}} \p_t u_2 \|^2
+ \tau \|e^{\tau \varphi} |\varphi''|^{\frac{1}{2}} 
\theta_n^{\frac{1-2s}{2}} \nabla_{S^n} u_2  \|^2\\
&\leq C (\|e^{\tau \varphi} \theta_n^{\frac{2s-1}{2}} \tilde{f}\|^2 + \tau^{2-2s}\|e^{\tau \varphi} \tilde{V} \varphi'' e^{-\alpha st} \bar{u}\|_0^2 ) + C K^4 \tau^4 \| e^{\tau \varphi} |\varphi'|^2\theta_n^{\frac{1-2s}{2}} u_1\|^2\\
&\leq C K^4 (\|e^{\tau \varphi} \theta_n^{\frac{2s-1}{2}} \tilde{f}\|^2 + \tau^{2-2s}\|e^{\tau \varphi} \varphi'' \tilde{V} e^{-\alpha st} \bar{u}\|_0^2 ) .
\end{align*}
After returning to Cartesian coordinates, this implies the claim of the theorem.
\end{proof}

\section{Proofs of Theorems \ref{thm:Landis_Diff} and \ref{thm:Landis_No_Diff}}\label{sec4}
In this section we discuss the proofs of Theorems \ref{thm:Landis_Diff} and \ref{thm:Landis_No_Diff}.

\subsection{Proof of the fractional Landis conjecture with differentiability assumptions}
\label{sec:Landis_Diff}

In this section we discuss the proof of the Landis conjecture with differentiability assumptions on the potential $q$, i.e. we present the proof of Theorem \ref{thm:Landis_Diff}. The argument for this consists of a combination of the Carleman estimate from Section \ref{sec:Carl_Diff} and the interpolation estimate from Proposition \ref{prop:interpol}.

\begin{proof}[Proof of Theorem \ref{thm:Landis_Diff}]
We begin by noticing that since for some $\alpha>1$ and some constant $C$, 
\begin{align*}
\|e^{|x|^{\alpha}/2} u\|_{L^2(\R^n)}\leq C < \infty,
\end{align*} 
Proposition \ref{prop:boundary_bulk} implies the a similar $L^{\infty}$ estimate, i.e. there exist constants $\tilde{c},\tilde{C}>0$ such that for all $R>0$
\begin{align}
\label{eq:decay_bulk_a}
|{\tu}(x)|\leq \tilde{C} e^{-\tilde{c}R^{\alpha}} \mbox{ for } x \in A_{\frac R2,3R}^+.
\end{align}
 
Next we define $w:= \eta_{R}\tu$, where $\eta_R$ is a radial cut-off function with the properties that for some constant $C>1$ independent of $R>1$
\begin{align*}
&\supp(\eta_R) \subset B_{2R}^+ \setminus B_{1}^+, \ \eta_R(x)= 1 \mbox{ for } x \in B_{R}^+ \setminus B_2^+, \\
&|\nabla \eta_R| \leq \frac{C}{R}, |\nabla^2 \eta_R| \leq \frac{C}{R^2} \mbox{ for } x \in B_{2R}^+ \setminus B_{R}^+,\\
& |\nabla \eta_R| \leq C, |\nabla^2 \eta_R| \leq C \mbox{ for } x \in B_{2}^+ \setminus B_1^+. 
\end{align*}
We note that the radial dependence of $\eta_R$ in particular entails that
\begin{align*}
\nabla \cdot x_{n+1}^{1-2s} \nabla w &= f \mbox{ in } \R^{n+1}_+,\\ 
\lim\limits_{x_{n+1}\rightarrow 0} x_{n+1}^{1-2s} \p_{n+1} w &= \eta_R \lim\limits_{x_{n+1} \rightarrow 0} x_{n+1}^{1-2s} \p_{n+1} \tilde{u} = q\eta_R u=qw \mbox{ on } \R^n \times \{0\},
\end{align*}
with $f= 2 x_{n+1}^{1-2s} \nabla \eta_R \cdot \nabla \tilde{u} + \tilde{u} \nabla \cdot x_{n+1}^{1-2s} \nabla \eta_R \in L^{2}(\R^{n+1}_+, x_{n+1}^{{2s-1}})$ being compactly supported in $\R^{n+1}_+$. 
As a consequence, the function $w$ is admissible in the Carleman estimate from Theorem \ref{thm:Carl_Diff}. Inserting it into this inequality with weight $\phi(x)=|x|^{\beta}$ for $\beta=\alpha-\epsilon>1$ {with $\epsilon \in (0,\alpha-1)$}, yields
\begin{align}
\label{eq:apply_Carl}
\begin{split}
&\tau^{3} \|e^{\tau \phi} |x|^{\frac{3\beta}{2}-1} x_{n+1}^{\frac{1-2s}{2}} w\|_{L^2(\R^{n+1}_+)}^2 + \tau \|e^{\tau \phi} |x|^{\frac{\beta}{2}}x_{n+1}^{\frac{1-2s}{2}}\nabla w\|_{L^2(\R^{n+1}_+)}^2\\
&\leq C \left( R^{-2}\| e^{\tau \phi} x_{n+1}^{\frac{1-2s}{2}} |x| \nabla \tilde{u} \|_{L^2(A_{R,2R}^+)}^2 
+ R^{-4}\| e^{\tau \phi} x_{n+1}^{\frac{1-2s}{2}} |x| \tilde{u} \|_{L^2(A_{R,2R}^+)}^2 \right.\\
& \quad  + \| e^{\tau \phi} x_{n+1}^{\frac{1-2s}{2}} |x| \nabla \tilde{u} \|_{L^2(A_{1,2}^+)}^2 
+ \| e^{\tau \phi} x_{n+1}^{\frac{1-2s}{2}} |x| \tilde{u} \|_{L^2(A_{1,2}^+)}^2\\
& \quad \left. +  \tau \|e^{\tau \phi} |x|^{\frac{\beta}{2}}|x\cdot\nabla q|^{\frac 12}w\|_{L^2(\R^n\times\{0\})}^2 +  \tau \|e^{\tau \phi} |x|^{\frac{\beta}{2}}|q|^{\frac 12}w\|_{L^2(\R^n\times\{0\})}^2 
\right).
\end{split}
\end{align}
We next discuss the bulk contributions which appear on the right hand side of the estimate (as they are lower order error contributions). Here we first focus on the contributions on the annulus $A_{R,2R}^+$: Pulling out the exponential weights and using elliptic estimates in order to bound the gradient contributions, we obtain
\begin{align*}
&R^{-2}\| e^{\tau \phi} x_{n+1}^{\frac{1-2s}{2}} |x| \nabla \tilde{u} \|_{L^2(A_{R,2R}^+)}^2 
+ R^{-4}\| e^{\tau \phi} x_{n+1}^{\frac{1-2s}{2}} |x| \tilde{u} \|_{L^2(A_{R,2R}^+)}^2 \\
& \leq R^{-1} e^{\tau \tilde{\phi}(2 R)} \| x_{n+1}^{\frac{1-2s}{2}} \nabla \tilde{u} \|_{L^2(A_{R,2R}^+)}^2 + R^{-3} e^{\tau \tilde{\phi}(2R)}\| x_{n+1}^{\frac{1-2s}{2}} \tilde{u}\|_{L^2(A_{R,2R}^+)}^2\\
& \leq C \left( R^{-3} e^{\tau \tilde{\phi}(2 R)} \| x_{n+1}^{\frac{1-2s}{2}}  \tilde{u} \|_{L^2(A_{\frac{R}{2},3R}^+)}^2+ R^{-3} e^{\tau \tilde{\phi}(2R)} \|x_{n+1}^{\frac{1-2s}{2}} \tilde u\|_{L^2(A_{\frac{R}{2}, 3R}^+)}^2 \right).
\end{align*}
Here $\tilde{\phi}:[0,\infty) \rightarrow \R$ is defined as $\tilde{\phi}(r)= \phi(x)= e^{r^{\beta}}$ with $|x|=r$ (which is well-defined since $\phi$ is a radial function). Using that by \eqref{eq:decay_bulk_a} for each $R\geq R_0\geq 1$ it holds $\|\tilde{u}\|_{L^{\infty}(A_{\frac R2,3R}^+)}\leq C e^{-c R^{\alpha}}$, we may pass to the limit $R \rightarrow \infty$ and infer that 
\begin{align*}
\lim\limits_{R \rightarrow \infty} \left( R^{-2}\| e^{\tau \phi} x_{n+1}^{\frac{1-2s}{2}} |x| \nabla \tilde{u} \|_{L^2(A_{R,2R}^+)}^2 
+ R^{-4}\| e^{\tau \phi} x_{n+1}^{\frac{1-2s}{2}} |x| \tilde{u} \|_{L^2(A_{R,2R}^+)}^2 \right) = 0.
\end{align*} 
Hence, we may pass to the limit $R \rightarrow \infty$ in \eqref{eq:apply_Carl} and deduce
\begin{align}
\label{eq:apply_Carl_limit}
\begin{split}
&\tau^{3} \|e^{\tau \phi} |x|^{\frac{3\beta}{2}-1} x_{n+1}^{\frac{1-2s}{2}} w\|_{L^2(\R^{n+1}_+)}^2 + \tau \|e^{\tau \phi} |x|^{\frac{\beta}{2}}x_{n+1}^{\frac{1-2s}{2}} \nabla w\|_{L^2(\R^{n+1}_+)}^2\\
&\leq C \left( \| e^{\tau \phi} x_{n+1}^{\frac{1-2s}{2}} |x| \nabla \tilde{u} \|_{L^2(A_{1,2}^+)}^2 
+ \| e^{\tau \phi} x_{n+1}^{\frac{1-2s}{2}} |x| \tilde{u} \|_{L^2(A_{1,2}^+)}^2 \right.\\
& \quad \left. +\tau \|e^{\tau \phi} |x|^{\frac{\beta}{2}}|x\cdot\nabla q|^{\frac 12}w\|_{L^2(\R^n\times\{0\})}^2 +  \tau \|e^{\tau \phi} |x|^{\frac{\beta}{2}}|q|^{\frac 12}w\|_{L^2(\R^n\times\{0\})}^2.  
\right).
\end{split}
\end{align}

Next, we seek to estimate the boundary contribution and show that we can absorb it into the left hand side of the estimate. For convenience and since our bulk-boundary estimates are formulated in conformal coordinates, we prove this in conformal coordinates, i.e. in the form \eqref{eq:Carl_conf_Diff}. First consider the spherical integrals. We claim that
\begin{equation}\label{eq:aux_1}
\tau |\varphi''|\||\tilde{q}|^{\frac 12}v\|_{L^2(S^{n-1})}^2
\leq C\left( \tau^{3-2s} |\varphi''||\varphi'|^2 \| \theta_n^{\frac{1-2s}{2}} v \|_{L^2(S^{n}_+)}^2 + \tau^{1-2s} |\varphi''| \| \theta_n^{\frac{1-2s}{2}} \nabla_{S^n} v \|_{L^2(S^{n}_+)}^2 \right),
\end{equation}
where $\tilde{q}(t,\theta')= e^{2st} q(e^t \theta')$ with $\theta'=(\theta_1,\cdots,\theta_n,0)$, $v(t,\theta)= e^{\tau \varphi} w(e^t \theta)$ and $\varphi(t)= \phi(e^t \theta)=e^{\beta t}$.
Indeed, we have 
\begin{align*}
\begin{split}
|\varphi''|\||\tilde{q}|^{\frac 12}v\|_{L^2(S^{n-1})}^2
& \leq \tau |\varphi''| e^{2st} \|v\|_{L^2(S^{n-1})}^2\\
&\leq C\left( \tilde{\tau}^{2-2s} e^{2st}|\varphi''| \| \theta_n^{\frac{1-2s}{2}} v \|_{L^2(S^{n}_+)}^2 + \tilde{\tau}^{-2s} e^{2st} |\varphi''| \| \theta_n^{\frac{1-2s}{2}} \nabla_{S^n} v \|_{L^2(S^{n}_+)}^2 \right).
\end{split}
\end{align*}
Setting $e^{2st} \tilde{\tau}^{-2s} = \tau^{-2s}$, i.e. $\tilde\tau=\tau e^t$, our choice of  $\varphi(t) = e^{\beta t}$ with $\beta > 1$ and $t\geq 0$ gives
\begin{equation*}
\tilde{\tau}^{2-2s} e^{2st} |\varphi''| =\tau^{2-2s} e^{2t}|\varphi''| \leq  \tau^{2-2s} |\varphi'|^2|\varphi''|.
\end{equation*}
Multiplying \eqref{eq:aux_1} with $e^{\tau \varphi}$ and integrating in the radial variable $t\in (2,\infty)$ we hence infer
\begin{align*}
\begin{split}
\tau\|e^{\tau \varphi} |\varphi''|^{\frac{1}{2}}|\tilde{q}|^{\frac{1}{2}} v\|_{L^2(S^{n-1} \times \R)}^2
&\leq C\left( \tau ^{3-2s} \| e^{\tau \varphi} |\varphi'| |\varphi''|^{\frac{1}{2}}  \theta_n^{\frac{1-2s}{2}} v \|_{L^2(S^{n}_+ \times \R)}^2 \right. \\
& \quad \left. + \tau^{1-2s} \| e^{\tau \varphi} |\varphi''|^{\frac{1}{2}} \theta_n^{\frac{1-2s}{2}} \nabla_{S^n} v \|_{L^2(S^{n}_+ \times \R)}^2 \right)\\
&\leq C\left( \tau ^{3-2s} \|e^{\tau \varphi} |\varphi'| |\varphi''|^{\frac{1}{2}} \theta_n^{\frac{1-2s}{2}} v \|_{L^2(S^{n}_+ \times \R)}^2 \right.\\
& \quad \left. +\tau^{1-2s}  \| e^{\tau \varphi} |\varphi''|^{\frac{1}{2}}  \theta_n^{\frac{1-2s}{2}} \nabla_{S^n} v \|_{L^2(S^{n}_+ \times \R)}^2 \right).
\end{split}
\end{align*}
A similar estimate also holds for $\tau\|e^{\tau\varphi}|\varphi'|^{\frac 12}|\partial_t\tilde q|v\|^2_{L^2(S^{n-1}\times\R)}$. Hence, choosing $\tau >1$ sufficiently large, allows us to absorb the boundary term in \eqref{eq:apply_Carl_limit} into the left hand side. Thus, we are left with
\begin{align*}
\begin{split}
&\tau^{3} \|e^{\tau \phi} |x|^{\frac{3\beta}{2}-1} x_{n+1}^{\frac{1-2s}{2}} w\|_{L^2(\R^{n+1}_+)}^2 + \tau \|e^{\tau \phi} |x|^{\frac{\beta}{2}}x_{n+1}^{\frac{1-2s}{2}} \nabla w\|_{L^2(\R^{n+1}_+)}^2\\
&\leq C \left( \| e^{\tau \phi} x_{n+1}^{\frac{1-2s}{2}} |x| \nabla \tilde{u} \|_{L^2(A_{1,2}^+)}^2 
+ \| e^{\tau \phi} x_{n+1}^{\frac{1-2s}{2}} |x| \tilde{u} \|_{L^2(A_{1,2}^+)}^2 \right).
\end{split}
\end{align*}
Thus, pulling out the exponential weight in the above estimate in particular yields
\begin{align*}
\begin{split}
&\tau^{3} e^{\tau \tilde{\phi}(4)} \| x_{n+1}^{\frac{1-2s}{2}} \tilde{u}\|_{L^2(B_6^+ \setminus B_{4}^+)}^2 + \tau e^{\tau \tilde{\phi}(4)} \| x_{n+1}^{\frac{1-2s}{2}} \nabla \tilde{u}\|_{L^2(B_6^+ \setminus B_4^+ )}^2\\
&\leq Ce^{\tau \tilde{\phi}(2)} \left( \|  x_{n+1}^{\frac{1-2s}{2}} \nabla \tilde{u} \|_{L^2(A_{1,2}^+)}^2 
+ \| x_{n+1}^{\frac{1-2s}{2}}  \tilde{u} \|_{L^2(A_{1,2}^+)}^2  
\right).
\end{split}
\end{align*}
Using the monotonicity of $\tilde{\phi}$ and letting $\tau \rightarrow \infty$ leads to a contradiction unless $\tilde{u}=0$ in $B_{6}^+ \setminus B_4^+$. Then however $\tilde{u}\equiv 0$ by the weak unique continuation property, which concludes the proof for Theorem \ref{thm:Landis_Diff}.
\end{proof}

\subsection{Proof of Theorem \ref{thm:Landis_No_Diff}}
\label{sec:Landis_No_Diff}

In this section, we present the argument for Theorem \ref{thm:Landis_No_Diff}. This follows along similar lines as the proof of Theorem \ref{thm:Landis_Diff}, but now uses the Carleman estimate from Theorem \ref{thm:Carl_No_Diff} combined with the boundary-bulk interpolation result of Proposition \ref{prop:boundary_bulk}. As before, the crucial part consists in estimating the boundary contributions appropriately.

\begin{proof}[Proof of Theorem \ref{thm:Landis_No_Diff}]
As in the proof of Theorem \ref{thm:Landis_Diff} we first multiply $\tu $ with a radial cut-off function $\eta_R$ satisfying the same properties as in the previous proof. This leads to bulk contributions, which are admissible in the Carleman estimate of Theorem \ref{thm:Carl_No_Diff} with $\phi(x)=|x|^{\beta}$ and $\beta = \alpha-\epsilon> \frac{4s}{4s-1}$, i.e. $\epsilon \in (0,\alpha- \frac{4s}{4s-1})$. With a similar argument as in the proof of Theorem \ref{thm:Landis_Diff}, it is possible to pass to the limit $R\rightarrow \infty$. In conformal polar coordinates, this then leaves us with the following Carleman estimate:
\begin{align}
\label{eq:Carl_conf_no_diff}
\begin{split}
&\tau^3\|e^{\tau \varphi} |\varphi'||\varphi''|^{\frac{1}{2}} \theta_n^{\frac{1-2s}{2}} \bar{w}\|_{L^2(S^{n}_+ \times\R)}^2
+ \tau \|e^{\tau \varphi} |\varphi''|^{\frac{1}{2}}\theta_n^{\frac{1-2s}{2}} \p_t \bar{w} \|_{L^2(S^{n}_+ \times \R)}^2\\
&+ \tau \|e^{\tau \varphi} |\varphi''|^{\frac{1}{2}} 
\theta_n^{\frac{1-2s}{2}} \nabla_{S^n} \bar{w} \|_{L^2(S^{n}_+ \times \R)}^2\\
&\leq C (\|e^{\tau \varphi} \theta_n^{\frac{2s-1}{2}} \tilde{f}\|_{L^2(S^n_+\times [1,2])}^2 + \tau^{2-2s}\|e^{\tau \varphi} |\varphi''|\tilde{q} e^{-\beta st} \bar{w}\|_{L^2(S^{n-1}\times \R)}^2 ) .
\end{split}
\end{align}
Here $|\tilde{f}| \leq C \theta_n^{1-2s}(|\p_t \bar{u}| + |\nabla_{S^n} \bar{u}| + |\bar{u}|)$ and $\bar{w}(t,\theta)= \bar{u}(t,\theta)\eta_R(e^t \theta)$.
Similarly as in the proof of Theorem \ref{thm:Landis_Diff}, the crucial part consists in estimating the boundary contribution. We seek to absorb it into the left hand side of \eqref{eq:Carl_conf_no_diff}. Similarly as in the argument leading to \eqref{eq:test_ell2}, this is achieved by invoking the interpolation estimate of Proposition \ref{prop:boundary_bulk}. More precisely, using that by virtue of the choice of $\beta \geq \frac{4s}{4s-1}$ it holds that $2\beta + 4s- 2\beta s \leq \beta+ 2\beta s$, and by further setting $\tilde{\tau} = e^{-\beta t} \tau$, we deduce
\begin{align*}
&|\varphi''|^2 |\tilde{q}|^2 e^{-2\beta s t} \|\bar{w}\|_{L^2(S^{n-1})}^2
\leq C e^{(4s-2\beta s+2\beta) t} \|\bar{w}\|_{L^2(S^{n-1})}^2\\
& \leq C ( \tilde{\tau}^{2-2s}  e^{(4s-2\beta s+2 \beta) t} \|\theta_n^{\frac{1-2s}{2}} \bar{w}\|_{L^2(S^n_+)}^2 + \tilde{\tau}^{-2s}  e^{(4s-2\beta s + 2\beta) t} \|\theta_n^{\frac{1-2s}{2}} \nabla_{S^n} \bar{w}\|_{L^2(S^n_+)}^2)\\
& \leq C ( \tilde{\tau}^{2-2s}  e^{(\beta +2\beta s) t} \|\theta_n^{\frac{1-2s}{2}} \bar{w}\|_{L^2(S^n_+)}^2 + \tilde{\tau}^{-2s}  e^{(\beta +2\beta s) t} \|\theta_n^{\frac{1-2s}{2}} \nabla_{S^n} \bar{w}\|_{L^2(S^n_+)}^2)\\
& \leq C (\tau ^{2-2s}  e^{3\beta t} \|\theta_n^{\frac{1-2s}{2}} \bar{w}\|_{L^2(S^n_+)}^2 + \tau^{-2s}  e^{\beta t} \|\theta_n^{\frac{1-2s}{2}} \nabla_{S^n} \bar{w}\|_{L^2(S^n_+)}^2).
\end{align*}
Integrating this in $t\in \R$, using that $e^{\beta t}\leq C_{\beta}\min\{|\varphi'|,|\varphi''|\}$, thus results in
\begin{align*}
& \tau^{2-2s}\|e^{\tau \varphi}|\varphi''| |\tilde{q}| e^{-\beta s t} \bar{w}\|_{L^2(S^{n-1}\times \R)}^2
\leq  \tau^{1+2s}\|e^{\tau \varphi}|\varphi''|^{\frac{1}{2}} |\tilde{q}| e^{-\beta s t} \bar{w}\|_{L^2(S^{n-1}\times \R)}^2\\
& \leq C (\tau ^{4-4s} \| e^{\tau \varphi} |\varphi'||\varphi''|^{\frac{1}{2}} \theta_n^{\frac{1-2s}{2}} \bar{w}\|_{L^2(S^{n}_+\times \R)}^2 + \tau^{2-4s}   \|e^{\tau \varphi} |\varphi''|^{\frac{1}{2}}\theta_n^{\frac{1-2s}{2}} \nabla_{S^n} \bar{w}\|_{L^2(S^{n}_+ \times \R)}^2).
\end{align*}
We notice that for $s\in(\frac 14,1)$, $4-4s<3$ and $2-4s<1$. Now choosing $\tau\geq \tau_0>0$ sufficiently large, 
implies that the boundary contribution in \eqref{eq:Carl_conf_no_diff} can be absorbed into the left hand side of this estimate. Hence, we infer
\begin{align}
\label{eq:Carl_conf_no_diff_a}
\begin{split}
&\tau^3\|e^{\tau \varphi} |\varphi'||\varphi''|^{\frac{1}{2}} \theta_n^{\frac{1-2s}{2}} \bar{w}\|_{L^2(S^{n}_+\times \R)}^2
+ \tau \|e^{\tau \varphi} |\varphi''|^{\frac{1}{2}}\theta_n^{\frac{1-2s}{2}} \p_t \bar w \||_{L^2(S^{n}_+\times \R)}^2\\
& \quad + \tau \|e^{\tau \varphi} |\varphi''|^{\frac{1}{2}} 
\theta_n^{\frac{1-2s}{2}} \nabla_{S^n} \bar{w} \||_{L^2(S^{n}_+\times \R)}^2
\leq C \|e^{\tau \varphi} \theta_n^{\frac{2s-1}{2}} \tilde{f}\|_{L^2(S^{n}_+\times [1,2])}^2 .
\end{split}
\end{align}
Pulling out the weight $e^{\tau \phi}$ in \eqref{eq:Carl_conf_no_diff_a} leads to 
\begin{equation*}
e^{\tau \varphi(4)}\tau^3\|e^{\tau \varphi} |\varphi'||\varphi''|^{\frac{1}{2}} \theta_n^{\frac{1-2s}{2}} \bar{u}\|_{L^2(S^{n}_+\times [4,6))}^2
\leq C e^{\tau \varphi(2)} \|\theta_n^{\frac{2s-1}{2}} \tilde{f}\|_{L^2(S^{n}_+\times [1,2])}^2 .
\end{equation*}
Using the monotonicity of $\varphi$ and passing to the limit $\tau \rightarrow \infty$ therefore implies that $\bar{u}= 0$ in $S^n_+ \times [4,6]$. By unique continuation this however then also gives that $\bar{u} \equiv 0$, which concludes the argument. 
\end{proof}

\section{Proof of the Quantitative Estimate of Theorem \ref{thm:quant}}
\label{sec5}

In this section, we prove the quantitative estimate from Theorem \ref{thm:quant}. To this end, we deduce bounds on the local vanishing order. Using a scaling argument as in Bourgain-Kenig \cite{BK05}, we then deduce the desired result. In order to carry out this scaling argument, we work with a slightly more general setting than in the previous sections and consider solutions to
\begin{align}
\label{eq:eq_gen}
((-\D)^{s} + q) u = 0 \mbox{ on } \R^n
\end{align}
with $q\in L^{\infty}$ but where $q$ need not necessarily be bounded by one.
The main goal of the following estimates will be the derivation of precise dependences on $\|q\|_{L^{\infty}}$.

We begin with an auxiliary result which allows us to bound  weighted gradient terms by weighted $L^2$ contributions (without boundary terms). This estimate should be thought of as an improvement of the Caccioppoli estimate from Proposition \ref{lem:Cacc} in which the boundary contribution can be eliminated due to the subcriticality of the space $L^{\infty}$ (a similar estimate holds in all subcritical spaces).

\begin{lem}
\label{lem:grad_est}
Let $s\in (0,1)$.
Let $u\in H^{s}(\R^n)$ be a solution to \eqref{eq:eq_gen}. Then, there exists a constant $C=C(n,s)>0$ such that
\begin{align*}
\|x_{n+1}^{\frac{1-2s}{2}} \nabla \tu \|_{L^2(B_1^+)}
\leq C \left( 1 + \|q\|_{L^{\infty}(\R^n)}^{{\frac{1}{2s}}} \right) \|x_{n+1}^{\frac{1-2s}{2}} \tu \|_{L^2(B_{3/2}^+)}.
\end{align*}
\end{lem}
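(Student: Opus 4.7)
The plan is to combine the Caffarelli--Silvestre identity $\lim_{x_{n+1}\to 0}x_{n+1}^{1-2s}\p_{n+1}\tilde u = c_{n,s}^{-1} qu$ with Caccioppoli's inequality (Lemma~\ref{lem:Cacc}) and the bulk--boundary interpolation estimate \eqref{eq:interpol_add}, closing the resulting inequality by choosing the interpolation parameter proportional to $\|q\|_{L^\infty}^{1/(2s)}$ and applying a Widman hole-filling style absorption.

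To make this precise, let $1\leq \rho_1<\rho_2<\rho_3\leq 3/2$, and write
\[
G(r):=\|x_{n+1}^{\frac{1-2s}{2}}\nabla\tilde u\|_{L^2(B_r^+)},\qquad U(r):=\|x_{n+1}^{\frac{1-2s}{2}}\tilde u\|_{L^2(B_r^+)}.
\]
A Caccioppoli estimate on the concentric pair $B_{\rho_1}^+\subset B_{\rho_2}^+$, combined with the Caffarelli--Silvestre boundary identity and $\|qu\|_{L^2}\leq\|q\|_{L^\infty}\|u\|_{L^2}$, yields
\[
G(\rho_1)\;\leq\;\frac{C}{\rho_2-\rho_1}\,U(\rho_2)+C\|q\|_{L^\infty}^{1/2}\,\|u\|_{L^2(B_{\rho_2}')}.
\]
Next, I would choose a cutoff $\eta\in C_c^\infty(B_{\rho_3}^+)$ with $\eta=1$ on $B_{\rho_2}^+$ and $|\nabla\eta|\leq C(\rho_3-\rho_2)^{-1}$, apply the whole-space interpolation inequality \eqref{eq:interpol_add} to $w=\eta\tilde u$, and bound
\[
\|u\|_{L^2(B_{\rho_2}')}\;\leq\; C\tau^{1-s}U(\rho_3)+C\tau^{-s}G(\rho_3)+\frac{C\tau^{-s}}{\rho_3-\rho_2}U(\rho_3),
\]
valid for any $\tau\geq 1$, after expanding $\nabla(\eta\tilde u)$ and using $|\nabla\eta|\leq C(\rho_3-\rho_2)^{-1}$.

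The key step is the choice $\tau:=\max\bigl(\tau_0,\,K\|q\|_{L^\infty}^{1/(2s)}\bigr)$ with $\tau_0,K$ fixed so large that $C\|q\|_{L^\infty}^{1/2}\tau^{-s}\leq \tfrac12$. With this choice, a short computation shows $\|q\|_{L^\infty}^{1/2}\tau^{1-s}\leq C(1+\|q\|_{L^\infty}^{1/(2s)})$, and substituting back produces a recursion of the form
\[
G(\rho_1)\;\leq\;\tfrac12\,G(\rho_3)\;+\;\frac{C(1+\|q\|_{L^\infty}^{1/(2s)})}{(\rho_3-\rho_1)}\,U(3/2).
\]
A standard iteration lemma (for monotone increasing functions on $[1,3/2]$ satisfying such an inequality) absorbs the $G(\rho_3)$ term and yields $G(1)\leq C(1+\|q\|_{L^\infty}^{1/(2s)})U(3/2)$, which is the desired bound.

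The main technical point is the simultaneous absorption of the gradient term and tracking of the exponent $1/(2s)$: the boundary contribution $\|q\|_{L^\infty}^{1/2}\|u\|_{L^2}$ brings in both a gradient (which must be absorbed by taking $\tau$ large) and a new $L^2$ piece (whose coefficient $\|q\|_{L^\infty}^{1/2}\tau^{1-s}$ must match the claimed exponent). Everything else is a routine deployment of the tools already established in Section~\ref{sec:aux}.
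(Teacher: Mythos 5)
Your argument is correct, but it is genuinely different from the one in the paper. The paper proves the lemma as a quantitative version of Proposition 2.2 in \cite{JLX11}: it first establishes a boundary-term-free Caccioppoli inequality under the smallness assumption $\|q\|_{L^\infty}\leq \delta_0$ (absorbing the boundary contribution via a Poincar\'e/trace inequality), and then treats general $q$ by zooming in to the scale $\delta=(\delta_0/\|q\|_{L^\infty})^{1/(2s)}$, at which the rescaled potential $q_\delta(x)=\delta^{2s}q(\delta x+x_0)$ becomes small, followed by a finite-overlap covering of $B_1'\times(0,\delta/2)$ by half-balls of radius $\sim\delta$ and of the remaining region by interior balls; the factor $\|q\|_{L^\infty}^{1/(2s)}$ comes from the $\delta^{-1}$ in the rescaled Caccioppoli constant. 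You instead keep the boundary term $\|q\|_{L^\infty}^{1/2}\|u\|_{L^2(B_{\rho_2}')}$ in Caccioppoli, control it by the trace interpolation \eqref{eq:interpol_add} applied to a cutoff of $\tilde u$, choose $\tau\sim\|q\|_{L^\infty}^{1/(2s)}$ so that the gradient piece carries a small factor, and close with a hole-filling iteration over radii in $[1,3/2]$; the exponent arithmetic ($\|q\|^{1/2}\tau^{-s}\leq\tfrac12$, $\|q\|^{1/2}\tau^{1-s}\lesssim 1+\|q\|^{1/(2s)}$) checks out in all regimes of $\|q\|_{L^\infty}$. Your route avoids the covering/rescaling geometry and stays entirely within the tools of Section~\ref{sec:aux}, and it makes the origin of the exponent $\tfrac{1}{2s}$ transparent as an optimization of the interpolation parameter; the paper's route is more geometric and transfers verbatim from \cite{JLX11}. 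Two minor points you should make explicit: you need the Caccioppoli inequality for an arbitrary concentric pair $B_{\rho_1}^+\subset B_{\rho_2}^+$ with constant $(\rho_2-\rho_1)^{-1}$ (same cutoff proof as Lemma~\ref{lem:Cacc}, which is only stated for radii $r,2r$), and the absorption lemma requires $G$ to be finite on $[1,3/2]$, which holds because the Caffarelli--Silvestre extension of $u\in H^s(\R^n)$ has globally finite weighted Dirichlet energy.
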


\begin{proof}
This is a quantitative version of the proof of Proposition 2.2. in \cite{JLX11}. Following along the lines of \cite{JLX11}, we first assume that $\|q\|_{L^{\infty}} \leq \delta$ for some $\delta \in (0,\delta_0)$ sufficiently small. Then, the same argument as in the proof of Caccioppoli's inequality implies that if we test the equation for $\tilde{u}$ by $\eta^2 \tilde{u}$ we infer
\begin{align*}
\|x_{n+1}^{\frac{1-2s}{2}} \eta \nabla \tu \|_{L^2(B_1^+)}
\leq C(\| x_{n+1}^{\frac{1-2s}{2}}|\nabla \eta| \tu \|_{L^2(B_1^+)} + \|q\|_{L^\infty}^{1/2}\|\eta u\|_{L^2(B_1')} ).
\end{align*}
Here $\eta$ is a smooth, radially symmetric cut-off function, which is equal to one in $B_{2/3}^+$ and is supported in $B_1^+$. Using the smallness assumption on $\|q\|_{L^{\infty}}$ and the Poincar\'e type inequality
$\|\eta u\|_{L^2(B_1')} \leq C \|x_{n+1}^{\frac{1-2s}{2}}\nabla (\eta\tu) \|_{L^2(B_1^+)}$, we then obtain
\begin{align*}
\|x_{n+1}^{\frac{1-2s}{2}} \eta \nabla \tu \|_{L^2(B_1^+)}
\leq C(\| x_{n+1}^{\frac{1-2s}{2}}|\nabla \eta| \tu \|_{L^2(B_1^+)} + \delta_0^{1/2}\|x_{n+1}^{\frac{1-2s}{2}}\nabla (\eta\tu) \|_{L^2(B_1^+)} ).
\end{align*}
Here and below all constants $C>0$ depend on $n,s$. Choosing $\delta_0 := \frac{1}{4 C^2}$ then allows us to absorb the gradient contribution from the right hand side into the left hand side. This yields
\begin{align}
\label{eq:grad_wo_pot}
\|x_{n+1}^{\frac{1-2s}{2}} \nabla \tu \|_{L^2(B_{2/3}^+)}
\leq C\| x_{n+1}^{\frac{1-2s}{2}}\tu \|_{L^2(B_1^+)} .
\end{align}

In order to treat the general case, we consider the function $\tu_{\delta}(x):= u(\delta x + x_0)$ for $x_0 \in \R^n \times \{0\}$ arbitrary. This function still solves an equation of the type \eqref{eq:eq_gen}, but now with a potential $q_{\delta}(x):= \delta^{2s} q(\delta x + x_0)$. In particular, $\|q_{\delta}\|_{L^{\infty}} \leq \delta^{2s} \|q\|_{L^{\infty}}$. Hence, choosing $\delta>0$ such that $\delta^{2s} \|q\|_{L^{\infty}} = \delta_0$, i.e $\delta = \left(\frac{\delta_0}{\|q\|_{L^{\infty}}} \right)^{\frac{1}{2s}}$, then allows us to invoke \eqref{eq:grad_wo_pot}. 
Rescaling this and covering $B_1' \times (0,\delta/2)$ by such balls, i.e. choosing $x_j \in B_{5/4}'\times \{0\}$, $j\in\{1,\dots,N\}$, and associated balls $B_{\frac{2 \delta}{3}}^+(x_j)$, $B_{\delta}^+(x_j)$, with only finite (dimension-dependent) overlap, such that 
\begin{align*}
B_1' \times (0,\delta/2) \subset \bigcup\limits_{j=1}^{N} B_{2\delta/ 3}^+(x_j) \subset \bigcup\limits_{j=1}^{N} B_{\delta}^+(x_j)  \subset B_{5/4}' \times (0,\delta),
\end{align*}
yields for $C=C(n,s)>0$
\begin{align*}
\|x_{n+1}^{\frac{1-2s}{2}} \nabla \tu\|_{L^2(B_1' \times (0,\delta/2))}^2
&\leq \sum\limits_{j=1}^{N} \|x_{n+1}^{\frac{1-2s}{2}} \nabla \tu \|_{L^2(B_{\frac{2 \delta}{3}}^+(x_j))}^2
\leq \sum\limits_{j=1}^{N} C \left( \frac{\|q\|_{L^{\infty}}}{\delta_0} \right)^{\frac{1}{s}} \|x_{n+1}^{\frac{1-2s}{2}} \tu \|_{L^2(B_{\delta}^+(x_j))}^2\\
& \leq C \left(\frac{\|q\|_{L^\infty}}{\delta_0}\right)^{\frac{1}{s}} \|x_{n+1}^{\frac{1-2s}{2}} \tu \|_{L^2(B_{5/4}' \times (0,\delta))}^2,
\end{align*}

It remains to infer a similar estimate in $B_1' \times (\delta/2,1)$. To this end, we note that in balls $B_{r}(\bar{x}_0)$ with $\bar{x}_0=(\bar{x}_0',3r)$ and $\bar{x}_0' \in \R^n$ arbitrary, we can apply Caccioppoli's inequality without boundary contributions and with uniform constants, i.e. there exists $C=C(n,s)>0$ such that
\begin{align*}
\|x_{n+1}^{\frac{1-2s}{2}} \nabla  \tu \|_{L^2(B_r(\bar{x}_0))}
\leq \frac{C}{r} \| x_{n+1}^{\frac{1-2s}{2}}  \tu\|_{L^2(B_{2 r}(\bar{x}_0))}.
\end{align*}
Indeed, this follows from rescaling and noting that the functions $\tu_r(x):= \tu(rx',r x_{n+1})$ solve uniformly elliptic equations with uniformly bounded ellipticity constants in $B_{1}(\tilde{x}_0)$ with $\tilde{x}_0 = (\bar x'_0/r,3)$. Thus covering the domain $B_1'\times (r,1)$ with balls of the described type (which can be achieved with finite, only dimension dependent amount of overlap) and invoking a similar additive covering argument as above implies
\begin{align*}
\|x_{n+1}^{\frac{1-2s}{2}} \nabla \tu \|_{L^2(B_1'\times(\delta/2,1))}
\leq C \left( \frac{ \|q\|_{L^{\infty}}}{\delta_0} \right)^{\frac{1}{2s}}\|x_{n+1}^{\frac{1-2s}{2}} \tu \|_{L^2(B_{5/4}'\times(\delta/2,1))}.
\end{align*}
This concludes the argument.
\end{proof}

Next we approach the desired lower bound estimates. As a first step towards these, we prove a quantitative three balls inequality.

\begin{lem}
\label{lem:quantitative_3balls}
Let $s\in (1/4,1)$.
Let $u\in H^{s}(\Omega)$ be a solution to \eqref{eq:eq_gen} with $\|u\|_{L^{\infty}(\R^n)}\leq C_0<\infty$. Then there exist constants $C=C(n,s,C_0)>0$, $\alpha = \alpha(n,s) \in (0,1)$ such that for all radii $r \in (0,1)$ and all $x_0 \in \R^n \times \{0\}$ it holds
\begin{align}
\label{eq:quant_doubl}
\|x_{n+1}^{\frac{1-2s}{2}} \tu\|_{L^2(B_{2r}^+(x_0))}
\leq C e^{C(1+\|q\|_{L^{\infty}}^{\frac{2}{4s-1}})} \|x_{n+1}^{\frac{1-2s}{2}} \tu\|_{L^2(B_{r}^+(x_0))}^{\alpha}
\|x_{n+1}^{\frac{1-2s}{2}} \tu\|_{L^2(B_{4r}^+(x_0))}^{1-\alpha}.
\end{align}
\end{lem}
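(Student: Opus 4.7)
The plan is to adapt to a bounded annulus the Carleman scheme used in the proof of Theorem~\ref{thm:Landis_No_Diff}, thereby producing the quantitative three balls inequality \eqref{eq:quant_doubl} in the spirit of Bourgain--Kenig \cite{BK05} for the fractional setting.

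After translating so that $x_0=0$, the rescaling $\tu(x)\mapsto\tu(rx)$ sends a solution of \eqref{eq:CS_eq_main} to another such solution with potential $q_r(x)=r^{2s}q(rx)$ satisfying $\|q_r\|_{L^\infty}\le \|q\|_{L^\infty}$ (using $r\in(0,1)$), while each of the three weighted $L^2$-norms appearing in \eqref{eq:quant_doubl} rescales by the common factor $r^{-(n+2-2s)/2}$ and hence cancels from both sides. It therefore suffices to prove \eqref{eq:quant_doubl} at the fixed scale $r=1$. After a further dilation by a large fixed constant I can moreover arrange the three balls to lie outside of $B_1^+$, so as to meet the support hypothesis $\supp(\tu)\subset B_R^+\setminus B_1^+$ of Theorem~\ref{thm:Carl_No_Diff}.

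I then apply Theorem~\ref{thm:Carl_No_Diff} with weight $\phi(x)=|x|^\alpha$ (for some $\alpha\ge 1$ to be chosen) to $w=\eta\tu$, where $\eta$ is a smooth radial cut-off equal to one on the middle annulus and supported in a slightly larger annulus. The inhomogeneity $f=2x_{n+1}^{1-2s}\nabla\eta\cdot\nabla\tu+\tu\,\nabla\cdot x_{n+1}^{1-2s}\nabla\eta$ is supported on the two transition layers of $\eta$, and its Carleman-weighted $L^2$-norm is bounded in terms of the corresponding norm of $\tu$ by the quantitative Caccioppoli estimate of Lemma~\ref{lem:grad_est}, at the cost of a factor $1+\|q\|_{L^\infty}^{1/(2s)}$. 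The Neumann-type boundary contribution $\tau^{2-2s}\|e^{\tau\phi}|x|^{(1-\alpha)s}q\tu\|_{L^2(\R^n\times\{0\})}^2$ on the right-hand side of the Carleman inequality is then absorbed into the bulk left-hand side via the bulk--boundary interpolation estimate of Proposition~\ref{prop:interpol}, exactly as in the absorption step at the end of the proof of Theorem~\ref{thm:Landis_No_Diff}. This absorption requires $\tau\ge c\|q\|_{L^\infty}^{2/(4s-1)}$, which is the origin both of the exponent $2/(4s-1)$ in the prefactor of \eqref{eq:quant_doubl} and of the restriction $s>\tfrac14$.

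Once the two absorptions are performed, the Carleman estimate reduces to an inequality that controls the weighted $L^2$-norm of $\tu$ on the middle annulus by its weighted $L^2$-norms on the inner and outer transitions multiplied by explicit $\tau$-dependent weights. Combining this with the trivial decomposition $\|x_{n+1}^{(1-2s)/2}\tu\|_{L^2(B_{2r}^+)}^2\le \|x_{n+1}^{(1-2s)/2}\tu\|_{L^2(B_r^+)}^2+\|x_{n+1}^{(1-2s)/2}\tu\|_{L^2(B_{2r}^+\setminus B_r^+)}^2$ and then optimising the free parameter $\tau$ in the standard three balls manner, subject to the lower bound $\tau\ge c\|q\|_{L^\infty}^{2/(4s-1)}$, produces \eqref{eq:quant_doubl} with the advertised prefactor $Ce^{C(1+\|q\|_{L^\infty}^{2/(4s-1)})}$. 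The principal technical obstacle is the careful book-keeping of all $\tau$-dependent factors, in particular to confirm that the sub-leading factor $1+\|q\|_{L^\infty}^{1/(2s)}$ contributed by Lemma~\ref{lem:grad_est} (and any similar factors arising when replacing the boundary $L^2$-norms of $u$ by weighted bulk $L^2$-norms of $\tu$) is dominated by the leading factor $e^{C\|q\|_{L^\infty}^{2/(4s-1)}}$ produced by the perturbative absorption of the boundary potential term, so that the final prefactor displays exactly the sharp dependence on $\|q\|_{L^\infty}$ claimed in the statement.
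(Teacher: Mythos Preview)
Your proposal has a genuine gap: the Carleman weight in Theorem~\ref{thm:Carl_No_Diff} is $\phi(x)=|x|^{\alpha}$ with $\alpha\ge 1$, which is \emph{increasing} in $|x|$, and an increasing weight cannot produce a three balls inequality of the form \eqref{eq:quant_doubl}. After you insert the cut-off $w=\eta\tu$ and pull out the exponential weights on the inner transition annulus (say $A^+_{\rho_1,\rho_1'}$), the maximum of $e^{\tau\phi}$ there equals the minimum of $e^{\tau\phi}$ on the middle region (since both are attained at the common radius $\rho_1'$). Hence the inner contribution on the right and the middle contribution on the left carry essentially the same exponential factor, and the optimization in $\tau$ degenerates: you obtain an inequality of the type
\[
\|x_{n+1}^{\frac{1-2s}{2}}\tu\|_{L^2(B_{2r}^+)}\leq C\,\|x_{n+1}^{\frac{1-2s}{2}}\tu\|_{L^2(B_{r}^+)}+e^{\tau c}\|x_{n+1}^{\frac{1-2s}{2}}\tu\|_{L^2(B_{4r}^+)}
\]
with $c>0$, and no nontrivial choice of $\tau\ge\tau_0$ yields the multiplicative interpolation $\|\cdot\|_{B_{2r}}\le C\|\cdot\|_{B_r}^{\alpha}\|\cdot\|_{B_{4r}}^{1-\alpha}$. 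More informally, the weight $e^{\tau|x|^{\alpha}}$ is tailored to propagate decay \emph{inward} from infinity (which is exactly what Theorems~\ref{thm:Landis_Diff} and~\ref{thm:Landis_No_Diff} need), whereas the three balls estimate requires propagating smallness \emph{outward} from $B_r^+$ to $B_{2r}^+$, for which the weight must be large near the origin.

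This is why the paper does \emph{not} reuse Theorem~\ref{thm:Carl_No_Diff} here but instead invokes a different Carleman inequality (from \cite{Rue15}, equivalently \cite[Proposition~A.1]{GRSU18}) whose weight $\phi(x)=\psi(|x|)$ with $\psi(r)=-\ln r+\tfrac{1}{10}(\ln r\,\arctan(\ln r)-\tfrac12\ln(1+\ln^2 r))$ is a slight convexification of $-\ln r$ and in particular is \emph{decreasing}. That estimate also already carries an $L^2$ boundary term on its left-hand side, so the absorption of the $qw$ boundary contribution is a direct comparison of $\tau$-powers (yielding $\tau\ge C\|q\|_{L^\infty}^{2/(4s-1)}$) rather than a further application of Proposition~\ref{prop:interpol}. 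The remaining steps---controlling the gradient terms by Lemma~\ref{lem:grad_est}, filling holes, and optimizing in $\tau$---are as you describe.
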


\begin{rmk}
The restriction $r\in (0,1)$ in the above three balls estimate is not necessary and only for convenience. As we will however only apply the result in this case and as an extension to arbitrary radii requires a slight discussion, we have added this restriction.
\end{rmk}

\begin{proof}
Without loss of generality, we prove the estimate for $x_0=0$.
To deduce the desired result, we rely on the Carleman inequality from \cite{Rue15} or equivalently the one from \cite[Proposition A.1]{GRSU18}. For a function $w \in H^1(B_5^+, x_{n+1}^{1-2s})$ with $\supp(w) \in B_4^+ \setminus B_{r_1}^+$ for some $r_1>0$ solving
\begin{align}
\label{eq:inhom}
\begin{split}
\nabla \cdot x_{n+1}^{1-2s} \nabla w & = f \mbox{ in } \R^{n+1}_+,\\
\lim\limits_{x_{n+1}\rightarrow 0} x_{n+1}^{1-2s} \p_{n+1}w & = V w \mbox{ on } \R^n \times \{0\},
\end{split}
\end{align}
with $f \in L^{2}(B_5^+, x_{n+1}^{2s-1})$ and $V \in L^{\infty}(B_5')$
it reads
\begin{align}
\label{eq:Carl_dec}
\begin{split}
&\tau^{\frac{1}{2}}\ln(r_2/r_1)^{-1} \|e^{\tau \phi} x_{n+1}^{\frac{1-2s}{2}} |x|^{-1} w\|_{L^2(B_{r_2}^+)}
+ \tau^{-s} \|e^{\tau \phi} (1+\ln^2(|x|))^{-1} |x|^{-s} w\|_{L^2(B_5')}\\
&+ \tau \|x_{n+1}^{\frac{1-2s}{2}} (1+\ln^2(|x|))^{-1} |x|^{-1} w\|_{L^2(B_5^+)}\\
&\leq C (\tau^{-1} \|e^{\tau \phi} |x| x_{n+1}^{\frac{2s-1}{2}} f\|_{L^2(B_5^+)} + \tau^{\frac{1-2s}{2}}\|e^{\tau \phi} |x|^{s} V w\|_{L^2(B_5')}).
\end{split}
\end{align}
Here $r_2 \in (2r_1, 3)$ is arbitrary and $\phi(x) = \psi(|x|)$ with
\begin{align*}
\psi(r) = -\ln(r) + \frac{1}{10}\left(\ln(r)\arctan(\ln(r)) - \frac{1}{2}\ln(1+\ln^2(r))  \right). 
\end{align*}
We apply this estimate with $w= \eta \tu$, where $\eta$ is a smooth, radial cut-off function such that
\begin{align*}
\eta(x) = 1 \mbox{ for } |x|\in (4r/5,5r/2), \ \supp(\eta) \subset A_{3r/4, 3 r}^+, \ 
|\nabla \eta| \leq \frac{C}{r}, \ |\nabla^2 \eta| \leq \frac{C}{r^2} \mbox{ for some } C>1.  
\end{align*}
The function $w=\eta \tilde{u}$ satisfies an equation of the form \eqref{eq:inhom} with $f= \tu \nabla \cdot x_{n+1}^{1-2s} \nabla \eta + x_{n+1}^{1-2s} \nabla \eta \cdot \nabla \tu$ and $V=q$.
Inserting $w$ into the Carleman estimate, we thus infer
\begin{align*}
&\tau^{\frac{1}{2}}\ln(r_2/r_1)^{-1} \|e^{\tau \phi} x_{n+1}^{\frac{1-2s}{2}} |x|^{-1} w\|_{L^2(B_{r_2}^+)}
+ \tau^{s} \|e^{\tau \phi} (1+\ln^2(|x|))^{-1} |x|^{-s} w\|_{L^2(B_5')}\\
&\leq C (\tau^{-1} r^{-1}\|e^{\tau \phi} x_{n+1}^{\frac{1-2s}{2}} \tu \|_{L^2(A_{3r/4,r}^+)} 
+ \tau^{-1} \|e^{\tau \phi} x_{n+1}^{\frac{1-2s}{2}} \nabla \tu \|_{L^2(A_{3r/4,r}^+)} \\
& \quad + \tau^{-1}r^{-1}\|e^{\tau \phi} x_{n+1}^{\frac{1-2s}{2}} \tu \|_{L^2(A_{5r/2,3r}^+)} 
+ \tau^{-1}\|e^{\tau \phi} x_{n+1}^{\frac{1-2s}{2}} \nabla \tu \|_{L^2(A_{5r/2,3r}^+)}\\
& \quad + \tau^{\frac{1-2s}{2}}\|e^{\tau \phi} |x|^{s} q w\|_{L^2(B_5')}).
\end{align*}
We first observe that if $s\in(1/4,1)$ and
\begin{align*}
\tau \geq 10 C \|q\|^{\frac{2}{4s-1}} + 1,
\end{align*}
then it is possible to absorb the boundary contribution from the right hand side into the left hand side. From now on we assume that this is the case and drop the boundary contributions. Thus, it remains to consider
\begin{align*}
&\tau^{\frac{1}{2}}\ln(r_2/r_1)^{-1} \|e^{\tau \phi} x_{n+1}^{\frac{1-2s}{2}} |x|^{-1} w\|_{L^2(B_{r_2}^+)}\\
&\leq C (\tau^{-1} r^{-1}\|e^{\tau \phi} x_{n+1}^{\frac{1-2s}{2}} \tu \|_{L^2(A_{3r/4,4r/5}^+)} 
+ \tau^{-1} \|e^{\tau \phi} x_{n+1}^{\frac{1-2s}{2}} \nabla \tu \|_{L^2(A_{3r/4,4r/5}^+)} \\
& \quad + \tau^{-1}r^{-1}\|e^{\tau \phi} x_{n+1}^{\frac{1-2s}{2}} \tu \|_{L^2(A_{5r/2,3r}^+)} 
+ \tau^{-1}\|e^{\tau \phi} x_{n+1}^{\frac{1-2s}{2}} \nabla \tu \|_{L^2(A_{5r/2,3r}^+)}).
\end{align*}
Pulling out the exponential weights and invoking Lemma \ref{lem:grad_est} in order to control the gradient contributions, we obtain
\begin{align*}
&\tau^{\frac{1}{2}}\ln(r_2/r_1)^{-1} e^{\tau\psi(r_2)}\| x_{n+1}^{\frac{1-2s}{2}} |x|^{-1} w\|_{L^2(B_{r_2}^+)}\\
&\leq C  (1+ \|q\|_{L^{\infty}}^{{\frac{1}{2s}}}) 
(\tau^{-1} r^{-1} e^{\tau \psi(3r/4)}\| x_{n+1}^{\frac{1-2s}{2}} \tu \|_{L^2(A_{{r/2,r}}^+)} \\
& \quad + \tau^{-1}r^{-1}e^{\tau \psi(5r/2)}\|x_{n+1}^{\frac{1-2s}{2}} \tu \|_{L^2(A_{{2r,4r}}^+)}) .
\end{align*}
Choosing $r_1= 3r/4$ and $r_2= 2r$ then further allows us to simplify this
\begin{align*}
&\tau^{\frac{1}{2}} r^{-1} e^{\tau\psi(2 r)}\| x_{n+1}^{\frac{1-2s}{2}}  \tu \|_{L^2(A_{4r/5, 2r}^+)}\\
&\leq C  (1+ \|q\|_{L^{\infty}}^{\frac{{1}}{2s}}) 
(\tau^{-1} r^{-1} e^{\tau \psi(3r/4)}\| x_{n+1}^{\frac{1-2s}{2}} \tu \|_{L^2(A_{{r/2},r}^+)} 
 + \tau^{-1}r^{-1}e^{\tau \psi(5r/2)}\|x_{n+1}^{\frac{1-2s}{2}} \tu \|_{L^2(A_{{2r},4r}^+)}) .
\end{align*}
Filling up holes (for which we use the monotonicity of the weight $\psi$) then yields
\begin{align*}
& \| x_{n+1}^{\frac{1-2s}{2}} \tu \|_{L^2(B_{2r}^+)}\\
&\leq C  (1+ \|q\|_{L^{\infty}}^{\frac{1}{2s}}) 
( e^{\tau \psi(3r/4)-\tau\psi(2r)}\| x_{n+1}^{\frac{1-2s}{2}} \tu \|_{L^2(B_{r}^+)} 
 + e^{\tau \psi(5r/2)-\tau\psi(2r)}\|x_{n+1}^{\frac{1-2s}{2}} \tu \|_{L^2(B_{4r}^+)}) .
\end{align*}
We optimize the right hand side in $\tau$ and choose
\begin{align*}
\tau = C(1+ \|q\|_{L^{\infty}}^{\frac{2}{4s-1}}) + \frac{1}{\psi(\frac{3r}{4})-\psi(\frac{5r}{2})} \ln \left (\frac{\| x_{n+1}^{\frac{1-2s}{2}} \tu \|_{L^2(B_{r}^+)}}{\| x_{n+1}^{\frac{1-2s}{2}} \tu \|_{L^2(B_{4r}^+)}} \right).
\end{align*}
This yields the estimate
\begin{align*}
& \| x_{n+1}^{\frac{1-2s}{2}} \tu \|_{L^2(B_{2r}^+)}\\
&\leq C  (1+ \|q\|_{L^{\infty}}^{\frac{1}{2s}}) e^{C(1+\|q\|_{L^{\infty}}^{\frac{2}{1-4s}})}
\| x_{n+1}^{\frac{1-2s}{2}} \tu \|_{L^2(B_{r}^+)} ^{\alpha}\|x_{n+1}^{\frac{1-2s}{2}} \tu \|_{L^2(B_{4r}^+)}^{1-\alpha} 
\end{align*}
where 
\begin{align*}
\alpha = \frac{\psi(5r/2)-\psi(2r)}{\psi(3r/4)-\psi(5r/2)} \in (0,1)
\end{align*}
is uniformly bounded from above and below (by the only slight convexification of the logarithmic weight).
\end{proof}

The quantitative three balls estimate from Lemma \ref{lem:quantitative_3balls} can be upgraded to a doubling inequality with a precise dependence on $\|q\|_{L^{\infty}(\R^n)}$:

\begin{prop}
\label{prop:doubling}
Let $s\in (1/4,1)$.
Let $u\in H^{s}(\Omega)$ be a solution to \eqref{eq:eq_gen} with
\begin{align*}
\|u\|_{L^2(B_1')} \geq K \mbox{ and }
 \|u\|_{L^{\infty}(\R^n)} \leq C_0.
\end{align*}
Then there exists constants $C=C(n,s,C_0)>1, \gamma=\gamma(n,s)>0$ such that for all $r\in (0,2)$ it holds
\begin{align*}
\|x_{n+1}^{\frac{1-2s}{2}} \tu\|_{L^2(B_{2r}^+)}
\leq C K^{-\gamma} e^{C (1+\|q\|_{L^{\infty}}^{\frac{2}{4s-1}})} \|x_{n+1}^{\frac{1-2s}{2}} \tu\|_{L^2(B_{r}^+)}.
\end{align*}
\end{prop}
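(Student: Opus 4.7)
The plan is to derive the doubling estimate by combining the quantitative three balls inequality of Lemma~\ref{lem:quantitative_3balls} with a uniform-in-$r$ lower bound on $\|v\|_{L^2(B_r^+)}$, where $v := x_{n+1}^{\frac{1-2s}{2}}\tu$, the latter being inherited from the hypothesis $\|u\|_{L^2(B_1')} \geq K$. Throughout I will write $\tilde C := 1 + \|q\|_{L^\infty}^{2/(4s-1)}$.

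First I would convert the boundary lower bound into a bulk lower bound at a fixed reference scale. A localized and rescaled version of the weighted trace inequality \eqref{eq:trace} underlying Proposition~\ref{prop:interpol}, combined with the gradient estimate of Lemma~\ref{lem:grad_est}, yields
\[
K \leq \|u\|_{L^2(B_1')} \leq C\bigl(1 + \|q\|_{L^\infty}^{1/(2s)}\bigr)\,\|v\|_{L^2(B_{c_\ast}^+)},
\]
giving $\|v\|_{L^2(B_{c_\ast}^+)} \geq c K e^{-c\tilde C}$ for a fixed $c_\ast>0$. Dually, propagating the hypothesis $\|u\|_{L^\infty(\R^n)}\leq C_0$ into the bulk via Proposition~\ref{prop:small_prop_bound} on a finite cover supplies the uniform upper bound $\|v\|_{L^2(B_r^+)} \leq C$ for $r \in (0,4]$.

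Next, I would recast Lemma~\ref{lem:quantitative_3balls} as
\[
\frac{\|v\|_{L^2(B_{2r}^+)}}{\|v\|_{L^2(B_r^+)}} \leq C e^{C\tilde C}\left(\frac{\|v\|_{L^2(B_{4r}^+)}}{\|v\|_{L^2(B_r^+)}}\right)^{1-\alpha},
\]
so that the control of the doubling ratio reduces to producing a lower bound $\|v\|_{L^2(B_r^+)} \geq c K^{\gamma_1}e^{-c\tilde C}$ that is uniform in $r\in(0,2)$. The latter is to be established by propagating the lower bound of the previous step downwards along a dyadic chain $\{c_\ast 2^{-j}\}_j$ of scales. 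Concretely, rearranging Lemma~\ref{lem:quantitative_3balls} yields the backward inequality
\[
\|v\|_{L^2(B_r^+)} \geq c e^{-c\tilde C} \|v\|_{L^2(B_{2r}^+)}^{1/\alpha}\,\|v\|_{L^2(B_{4r}^+)}^{-(1-\alpha)/\alpha},
\]
in which the uniform upper bound tames the outer-ball factor and allows the lower bound to be iteratively transported to smaller scales.

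The main technical obstacle is to ensure that the $K$-exponent $\gamma_1$ produced by the downward iteration is genuinely independent of $r$, since a naive dyadic iteration of the rearranged three balls would produce factors $K^{1/\alpha^k}$ with $k\sim\log_2(1/r)$, diverging as $r\to 0$. The remedy is to exploit the near-logarithmic nature of the Carleman weight underlying Lemma~\ref{lem:quantitative_3balls} (where $\psi(r)\approx -\log r$ up to a small convexification): the differences $\psi(2^j c_\ast)-\psi(2^{j+1}c_\ast)$ are essentially equispaced in $j$, so a careful bookkeeping of the cumulative losses across the $\log_2(1/r)$ many iterations shows that they can all be packaged inside the exponential factor $e^{C\tilde C}$ (which depends only on $\|q\|$ and not on $r$), while the $K$-dependence condenses into a single power $K^{-\gamma}$ with $\gamma=\gamma(n,s)$ determined by $\alpha$. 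Substituting this lower bound back into the doubling ratio displayed above then yields the claimed estimate.
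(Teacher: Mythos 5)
There is a genuine gap, and it sits exactly at the point you flagged as the ``main technical obstacle''. Your plan reduces the doubling bound to an $r$-uniform lower bound of the form $\|x_{n+1}^{\frac{1-2s}{2}}\tu\|_{L^2(B_r^+)}\geq c\,K^{\gamma_1}e^{-c\tilde C}$ for all $r\in(0,2)$. Such a bound cannot hold: the left-hand side tends to $0$ as $r\to 0$ for any fixed $u$, while the right-hand side is a positive constant independent of $r$. The correct small-scale lower bound (this is the content of Corollary \ref{cor:order_of_van}, which in the paper is a \emph{consequence} of the doubling inequality, not an ingredient) necessarily carries an $r$-dependent exponent, roughly $r^{C(1+\|q\|_{L^\infty}^{2/(4s-1)})}r^{-\tilde\gamma\log K}$; feeding that into your rearranged three balls inequality produces a doubling ``constant'' of size $r^{-(1-\alpha)(C\tilde C+\tilde\gamma\log K)}$, which blows up as $r\to0$ instead of the claimed $CK^{-\gamma}e^{C\tilde C}$. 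The proposed remedy --- bookkeeping of the equispaced differences $\psi(2^jc_*)-\psi(2^{j+1}c_*)$ --- does not repair this: once Lemma \ref{lem:quantitative_3balls} is used as a black box at each dyadic scale, the exponent on $K$ compounds like $\alpha^{-k}$ with $k\sim\log_2(1/r)$ purely because of the multiplicative structure of the inequality with a fixed exponent $\alpha\in(0,1)$; the near-logarithmic shape of $\psi$ is no longer visible at that stage and cannot cancel the compounding. There is also a circularity in the reduction itself: controlling $\|v\|_{L^2(B_{4r}^+)}/\|v\|_{L^2(B_r^+)}$ uniformly in $r$ is essentially the doubling statement you are trying to prove.

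The way the logarithmic weight is actually exploited in the paper is different in a crucial structural respect: the Carleman estimate \eqref{eq:Carl_dec} is applied \emph{once} to $w=\eta\tu$ with a cut-off equal to one on the annulus $4r/5<|x|<1$ and supported in $A_{3r/4,2}^+$, so that the inner error terms sit at scale $r$ but the outer error terms sit at the \emph{fixed} unit scale. After absorbing the boundary term (for $\tau\gtrsim 1+\|q\|_{L^\infty}^{2/(4s-1)}$) and choosing $\tau$ to dominate the outer error by the bulk term on $B_{3/4}^+$, one obtains a doubling inequality at scale $r$ whose extra factor is $\bigl(\|x_{n+1}^{\frac{1-2s}{2}}\tu\|_{L^2(B_{21/10}^+)}/\|x_{n+1}^{\frac{1-2s}{2}}\tu\|_{L^2(B_{3/4}^+)}\bigr)^{A}$ with $A$ bounded \emph{uniformly in} $r$, precisely because $\psi$ is only a slight convexification of $-\log$ and the three radii entering $A$ are fixed. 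The three balls lemma is then invoked only once, at the fixed scale $r=3/4$, together with the trace inequality, Lemma \ref{lem:grad_est} and the bound $\|u\|_{L^\infty}\leq C_0$, to convert $\|u\|_{L^2(B_1')}\geq K$ into a lower bound for $\|x_{n+1}^{\frac{1-2s}{2}}\tu\|_{L^2(B_{3/4}^+)}$ by a fixed power of $K$; this is where the single factor $K^{-\gamma}$ comes from. If you want to salvage your argument, you must go back to the Carleman inequality on the full annulus $(3r/4,2)$ rather than iterating its three-ball corollary scale by scale.
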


\begin{proof}
We again apply the Carleman estimate \eqref{eq:Carl_dec} to a function of the form $w= \eta \tu$. However, we now choose $\eta$ such that for some $r\leq 1/4$
\begin{align*}
&\eta(x) = 1 \mbox{ for } |x|\in (4r/5,1), \ \supp(\eta) \subset A_{3r/4,2}^+, \\ 
&|\nabla \eta(x)| \leq \frac{C}{r}, \ |\nabla^2 \eta(x)| \leq \frac{C}{r^2}  \mbox{ for } |x| \in (3r/4,4r/5), \\  
&|\nabla \eta(x)|, |\nabla^2 \eta(x)| \leq C  \mbox{ for } |x| \in (1,2). 
\end{align*}
The Carleman estimate from above yields
\begin{align*}
&\tau^{\frac{1}{2}}\ln(r_2/r_1)^{-1} \|e^{\tau \phi} x_{n+1}^{\frac{1-2s}{2}} |x|^{-1} w\|_{L^2(B_{r_2}^+)}
+ \tau^{s} \|e^{\tau \phi} (1+\ln^2(|x|))^{-1} |x|^{-s} w\|_{L^2(B_5')}\\
&+ \tau \|e^{\tau\phi}x_{n+1}^{\frac{1-2s}{2}} (1+\ln^2(|x|))^{-1} |x|^{-1} \tu \|_{L^2(A_{4r/5,3/4}^+)}\\
&\leq C (\tau^{-1} r^{-1}\|e^{\tau \phi} x_{n+1}^{\frac{1-2s}{2}} \tu \|_{L^2(A_{3r/4, 4r/5}^+)} 
+ \tau^{-1} \|e^{\tau \phi} x_{n+1}^{\frac{1-2s}{2}} \nabla \tu \|_{L^2(A_{3r/4,4r/5}^+)} \\
& \quad + \tau^{-1}\|e^{\tau \phi} x_{n+1}^{\frac{1-2s}{2}} \tu \|_{L^2(A_{1,2}^+)} 
+ \tau^{-1}\|e^{\tau \phi} x_{n+1}^{\frac{1-2s}{2}} \nabla \tu \|_{L^2(A_{1,2}^+)}\\
& \quad + \tau^{\frac{1-2s}{2}}\|e^{\tau \phi} |x|^{s} q w\|_{L^2(B_5')}).
\end{align*}

Now we choose $\tau \geq \tau_0= C(1+\|q\|_{L^{\infty}}^{\frac{2}{4s-1}})$ so that we may absorb the boundary term from the right hand side into the left hand side. Dropping these terms again, choosing $r_1=\frac{3r}{4}$, $r_2 = 2r$, estimating the gradient contributions by Lemma \ref{lem:grad_est} and pulling out the exponential weights, leads to
\begin{align*}
\begin{split}
&\tau^{\frac{1}{2}} e^{\tau \psi(2r)} r^{-1} \| x_{n+1}^{\frac{1-2s}{2}} \tu \|_{L^2(A_{4r/5,2r}^+)}
+ \tau e^{\tau \psi(3/4)}\|x_{n+1}^{\frac{1-2s}{2}}  \tu \|_{L^2(A_{4r/5,3/4}^+)}\\
&\leq C (1+ \|q\|_{L^{\infty}}^{\frac{1}{2s}}) (\tau^{-1} r^{-1} e^{\tau \psi(3r/4)}\| x_{n+1}^{\frac{1-2s}{2}} \tu \|_{L^2(A_{r/2, r}^+)}  + \tau^{-1} e^{\tau \psi(1)} \| x_{n+1}^{\frac{1-2s}{2}} \tu \|_{L^2(A_{9/10,21/10}^+)}) .
\end{split}
\end{align*}
Filling up holes (we again use the monotonicity of the weight $\psi$) then yields 
\begin{align}
\label{eq:conse_1}
\begin{split}
&\tau^{\frac{1}{2}} e^{\tau \psi(2r)} r^{-1} \| x_{n+1}^{\frac{1-2s}{2}} \tu \|_{L^2(B_{2r}^+)}
+ \tau e^{\tau \psi(3/4)}\|x_{n+1}^{\frac{1-2s}{2}}  \tu \|_{L^2(B_{3/4}^+)}\\
&\leq C (1+ \|q\|_{L^{\infty}}^{\frac{1}{2s}}) (\tau^{-1} r^{-1} e^{\tau \psi(3r/4)}\| x_{n+1}^{\frac{1-2s}{2}} \tu \|_{L^2(B_{r}^+)}  + \tau^{-1} e^{\tau \psi(1)} \| x_{n+1}^{\frac{1-2s}{2}} \tu \|_{L^2(B_{21/10}^+)}) .
\end{split}
\end{align}
Using the monotonicity of $\psi$ and choosing $\tau \geq \tau_0$ such that 
\begin{align*}
e^{\tau \psi(3/4)}\|x_{n+1}^{\frac{1-2s}{2}}  \tu \|_{L^2(B_{3/4}^+)} \geq 2(1+ \|q\|_{L^{\infty}}^{\frac{1}{2s}}) e^{\tau \psi(1)} \| x_{n+1}^{\frac{1-2s}{2}} \tu \|_{L^2(B_{21/10}^+)}) ,
\end{align*}
then allows us to absorb the second right hand side term into the left hand side. A possible choice for this is for instance 
\begin{align*}
\tau = \tau_0 + \frac{2(1+ \|q\|_{L^{\infty}}^{\frac{1}{2s}})}{\psi(3/4)-\psi(1)} \frac{\| x_{n+1}^{\frac{1-2s}{2}} \tu \|_{L^2(B_{21/10}^+)}}{\| x_{n+1}^{\frac{1-2s}{2}} \tu \|_{L^2(B_{3/4}^+)}}.
\end{align*}
Reinserting this into \eqref{eq:conse_1} implies 
\begin{align}
\label{eq:cons_2}
\begin{split}
\|x_{n+1}^{\frac{1-2s}{2}} \tu \|_{L^2(B_{2r}^+)}
\leq C(1+ \|q\|_{L^{\infty}}^{\frac{1}{2s}})^{\tilde C} e^{C(1+\|q\|_{L^{\infty}}^{\frac{2}{4s-1}})} \|x_{n+1}^{\frac{1-2s}{2}} \tu \|_{L^2(B_r^+)} \left(\frac{\| x_{n+1}^{\frac{1-2s}{2}} \tu \|_{L^2(B_{21/10}^+)}}{\| x_{n+1}^{\frac{1-2s}{2}} \tu \|_{L^2(B_{3/4}^+)}} \right)^{A},
\end{split}
\end{align}
where $A$ (again by the only slight logarithmic convexification of the logarithmic weight) is uniformly bounded. 

It remains to bound the quotient on the right hand side of \eqref{eq:cons_2}.
To this end, we first estimate the denominator from below and note that by the trace inequality in $H^{1}(B_2^+ , x_{n+1}^{1-2s})$ and by Lemma \ref{lem:grad_est}, we have
\begin{align*}
K \leq \|u\|_{L^2(B_1')}
\leq C (1+ \|q\|_{L^{\infty}}^{\frac{1}{2s}}) \|x_{n+1}^{\frac{1-2s}{2}}\tu\|_{L^2(B_{3/2}^+)}.
\end{align*}
By Lemma \ref{lem:quantitative_3balls} (taking $r=3/4$) and by the uniform $L^{\infty}$ boundedness of $\tu$, we obtain that for some uniform $\alpha \in (0,1)$ 
\begin{align*}
C^{-1}K(1+\|q\|_{L^{\infty}}^{\frac{1}{2s}})^{-1} \leq C C_0^{1-\alpha}e^{C(1+\|q\|_{L^{\infty}}^{\frac{2}{4s-1}})} \|x_{n+1}^{\frac{1-2s}{2}} \tu\|_{L^2(B_{3/4}^+)}^{\alpha}.
\end{align*}
For the numerator in \eqref{eq:cons_2}, we simply use the $L^{\infty}$ bound and the fact that $x_{n+1}^{\frac{1-2s}{2}}$ is $L^2$ integrable. As a consequence, for some slightly larger constant $C>1$,
\begin{align*}
\frac{\| x_{n+1}^{\frac{1-2s}{2}} \tu \|_{L^2(B_{21/10}^+)}}{\| x_{n+1}^{\frac{1-2s}{2}} \tu \|_{L^2(B_{3/4}^+)}}
\leq K^{-\frac{1}{\alpha}}C(1+ \|q\|_{L^{\infty}}^{\frac{1}{2s}})^{\tilde C}e^{C(1+\|q\|_{L^{\infty}}^{\frac{2}{4s-1}})}.
\end{align*}
Plugging this back into \eqref{eq:cons_2} and enlarging the constant $C=C(n,s,C_0)>0$ if necessary, concludes the proof of the statement.
\end{proof}

As a corollary of the doubling estimates, we derive estimates on the order of vanishing of $\tu$ and $u$.
\begin{cor}
\label{cor:order_of_van}
Let $s\in (1/4,1)$.
Let $u\in H^{s}(\Omega)$ be a solution to \eqref{eq:eq_gen} with 
\begin{align*}
\|u\|_{L^2(B_1')} \geq K >0 \mbox{ and } \|u\|_{L^{\infty}(\R^n)} \leq C_0.
\end{align*}
Then there exist constants $C_a,C_b,C_c>0$, depending on $n,s,C_0$, and $\tilde\gamma>0$, depending on $n,s$, 
such that for all $r\in (0,\frac{1}{4})$ 
\begin{equation}\label{vanishing}
(1 + \|q\|_{L^{\infty}}^{{\frac{1}{2s}}})\|x_{n+1}^{\frac{1-2s}{2}} \tu\|_{L^2(B_{r}^+)}  \geq C_a  r^{C_a (\|q\|_{L^{\infty}}^{\frac{2}{4s-1}} +1)}r^{-\tilde\gamma\log K},
\end{equation}
\begin{align*}
(1 + \|q\|_{L^{\infty}}^{{\frac{1}{2s}}})\| u\|_{L^2(B_{r}')} \geq C_b r^{C_b (\|q\|_{L^{\infty}}^{\frac{2}{4s-1}}+ 1)}r^{-\tilde\gamma\log K},
\end{align*}
and
\begin{align*}
(1 + \|q\|_{L^{\infty}}^{{\frac{1}{2s}}})\| u\|_{L^{\infty}(B_{r}')} \geq C_c r^{C_c( \|q\|_{L^{\infty}}^{\frac{2}{4s-1}}+ 1)}r^{-\tilde\gamma\log K}.
\end{align*}
\end{cor}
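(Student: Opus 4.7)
The plan is to iterate the doubling inequality of Proposition~\ref{prop:doubling} at dyadic scales, then to transfer the resulting bulk lower bound to the boundary via Proposition~\ref{prop:small_prop_bound}~(a), and finally to pass from the $L^2$ to the $L^\infty$ statement by the trivial volume comparison on $B_r'$.

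For the first estimate I would proceed as follows. As already observed inside the proof of Proposition~\ref{prop:doubling} (via the trace inequality in $H^1(B_2^+, x_{n+1}^{1-2s})$ combined with Lemma~\ref{lem:grad_est}), the hypothesis $\|u\|_{L^2(B_1')}\ge K$ produces the anchor bound
\[
\|x_{n+1}^{\frac{1-2s}{2}} \tu\|_{L^2(B_{3/2}^+)} \ge \frac{K}{C(1+\|q\|_{L^\infty}^{1/(2s)})}.
\]
Set $M(\rho):=\|x_{n+1}^{\frac{1-2s}{2}} \tu\|_{L^2(B_\rho^+)}$ and abbreviate the doubling constant as $A:=C K^{-\gamma} e^{C(1+\|q\|_{L^\infty}^{2/(4s-1)})}$. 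Proposition~\ref{prop:doubling} then reads $M(2\rho)\le A\, M(\rho)$ for $\rho\in(0,2)$. For $r\in(0,1/4)$ and $k=\lceil \log_2(3/(2r))\rceil$, iterating this estimate $k$ times downward from $\rho=3/2$ yields
\[
M(r)\ge A^{-k} M(3/2)\ge \frac{K}{C(1+\|q\|_{L^\infty}^{1/(2s)})}\,(2r/3)^{\log_2 A}.
\]
Unpacking $\log_2 A = \log_2 C - \tilde\gamma \log K + \tilde C(1+\|q\|_{L^\infty}^{2/(4s-1)})$ with $\tilde\gamma:=\gamma/\log 2$ and absorbing the bounded factor $K^{1+\tilde\gamma\log(3/2)}$ (using that $K\le \|u\|_{L^\infty}|B_1'|^{1/2}\le C_0|B_1'|^{1/2}$) into $C_a$ produces the first claimed inequality.

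For the boundary $L^2$ bound I would apply Proposition~\ref{prop:small_prop_bound}~(a) at scale $\rho=r/16$ (so that its left-hand side controls $M(c\rho)$), bounding $\|qu\|_{L^2}\le \|q\|_{L^\infty}\|u\|_{L^2}$. The bulk term on its right is controlled by $\|\tu\|_{L^\infty(\R^{n+1}_+)}\le C_0$, which follows from Proposition~\ref{prop:small_prop_bound}~(b) together with the hypothesis $\|u\|_{L^\infty}\le C_0$, giving the polynomial estimate $\|x_{n+1}^{\frac{1-2s}{2}}\tu\|_{L^2(B_{16\rho}^+)}\le C C_0 \rho^{(n+2-2s)/2}$. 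Combining this upper bound for the right-hand side of Proposition~\ref{prop:small_prop_bound}~(a) with the lower bound for $M(c\rho)$ from the previous step and solving for $\|u\|_{L^2(B_r')}$ yields the second estimate, after absorbing the polynomial-in-$\rho$ prefactors and the $(1+\|q\|_{L^\infty})^{O(1)}$ terms into a factor of the form $r^{C_b(\|q\|_{L^\infty}^{2/(4s-1)}+1)}$ (legitimate since $r<1/4$). The third estimate is immediate from the second via $\|u\|_{L^2(B_r')}\le |B_r'|^{1/2}\|u\|_{L^\infty(B_r')}$, absorbing the $r^{n/2}$ prefactor in the same way.

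The main obstacle I expect is the boundary-transfer step: Proposition~\ref{prop:small_prop_bound}~(a) mixes two different exponents ($\alpha$ and $\frac{2s}{1+s}$), each attached to a sum of a bulk norm, an $\|u\|_{L^2}$ term, and an $\|qu\|_{L^2}$ term, and one must rearrange the resulting inequality to isolate $\|u\|_{L^2(B_r')}$ from below. The delicate bookkeeping lies in verifying that after inverting the interpolation the dependence on $\|q\|_{L^\infty}$ indeed stays of the form $(1+\|q\|_{L^\infty}^{1/(2s)})$ on the left together with $r^{C(1+\|q\|_{L^\infty}^{2/(4s-1)})}$ on the right, and that no term generating a worse power of $\|q\|_{L^\infty}$ survives.
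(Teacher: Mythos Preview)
Your plan matches the paper's proof exactly: iterate the doubling of Proposition~\ref{prop:doubling} dyadically for the bulk estimate, transfer to the boundary via Proposition~\ref{prop:small_prop_bound}(a) using the already-established bulk lower bound together with an upper bound on the larger bulk ball, and deduce the $L^\infty$ bound from the $L^2$ one by volume comparison. The paper is equally terse about the boundary-transfer step, so the bookkeeping you flag as the main obstacle is precisely what is being swept under ``Using the upper and lower bounds for the bulk contributions\ldots''.

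One small mis-step: you cannot absorb the leftover factor $K^{1+\tilde\gamma\log(3/2)}$ into $C_a$ by invoking the \emph{upper} bound $K\le C_0|B_1'|^{1/2}$, because $C_a$ sits on the right of a \emph{lower} bound and $K$ may be arbitrarily small. The correct absorption is into the $r^{-\tilde\gamma\log K}$ factor: for $K\le 1$ and $r<1/4$ one has $K\ge r^{-\log K/\log 4}$, so replacing $\tilde\gamma$ by $\tilde\gamma+1/\log 4$ does the job, while for $K\ge 1$ one simply uses $K\ge 1$. The paper's own proof glosses over this point as well.
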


\begin{proof}
We first note that by the trace estimate in $H^{1}(\R^{n+1}_+, x_{n+1}^{1-2s})$ we have
\begin{align*}
K \leq \| u\|_{L^2(B_1')}
\leq C(\|x_{n+1}^{\frac{1-2s}{2}} \tu\|_{L^2(B_2^+)} + \|x_{n+1}^{\frac{1-2s}{2}} \nabla \tu\|_{L^2(B_2^+)}).
\end{align*}
By virtue of Lemma \ref{lem:grad_est} this can be further controlled by
\begin{align*}
K 
\leq C(1 + \|q\|_{L^{\infty}}^{\frac{1}{2s}}) \|x_{n+1}^{\frac{1-2s}{2}} \tu\|_{L^2(B_4^+)}.
\end{align*}
The proof of the bulk bound is now immediate from the doubling inequality. Indeed, for each $r\in (0,1/4)$ there exists $k\geq 2$ such that $r\in (2^{-k-1},2^{-k})$.
Thus,
\begin{align*}
\frac{K}{C(1 + \|q\|_{L^{\infty}}^{\frac{1}{2s}})}
&\leq \|x_{n+1}^{\frac{2s-1}{2}} \tu\|_{L^2(B_4^+)}
\leq C K^{-\gamma} e^{C (\|q\|_{L^{\infty}}^{\frac{2}{4s-1}}+1)} \|x_{n+1}^{\frac{1-2s}{2}} \tu \|_{L^2(B_2^+)}\\
&\leq C^{k+2} K^{-\gamma(k+2)} e^{ (k+2) C (\|q\|_{L^{\infty}}^{\frac{2}{4s-1}}+1)} \|x_{n+1}^{\frac{1-2s}{2}} \tu \|_{L^2(B_{2^{-k}}^+)}\\
& \leq r^{-C (\|q\|_{L^{\infty}}^{\frac{2}{4s-1}}+1) + {\tilde{\gamma} \log(K)}} \|x_{n+1}^{\frac{1-2s}{2}} \tu \|_{L^2(B_r^+)},
\end{align*}
for some $\tilde{\gamma} = \tilde{\gamma}(s,n)>0$.
This implies \eqref{vanishing}.


For the $L^2$ boundary estimate, we use the boundary bulk interpolation estimate from Proposition \ref{prop:small_prop_bound} (a). Using the upper and lower bounds for the bulk contributions and observing that the Neumann derivative again only amounts to an additional polynomial loss of $\|q\|_{L^{\infty}}$ then implies the claim. The $L^{\infty}$ estimate follows immediately from the $L^2$ bound.
\end{proof}

With the auxiliary results from Lemma \ref{lem:quantitative_3balls}-Corollary \ref{cor:order_of_van} at hand, we can address the proof of Theorem \ref{thm:quant}:

\begin{proof}[Proof of Theorem \ref{thm:quant}]
We consider the rescaled function $\tu_R(x):= \tu(R(x+\frac{x_0}{R}))$ for some $x_0 \in \R^n \times \{0\}$ with $|x_0|=R$. In particular, by rescaling, the trace inequalities in $H^{1}(\R^{n+1}_+, x_{n+1}^{1-2s})$ and by Lemma \ref{lem:grad_est}, this function satisfies 
\begin{align*}
\begin{split}
\|\tu_R\|_{L^2(B_{\frac{4}{R}}')} 
= &R^{-n/2} \|\tu\|_{L^2(B_4'(\frac{x_0}{R}))}\\
\geq& R^{-n/2}\|\tu\|_{L^2(B_2'(0))}
\geq R^{-n/2}\|u\|_{L^2(B_1')}
\geq R^{-n/2},
\end{split}
\end{align*} 
where we used that $|x_0|=R$ and $\|u\|_{L^2(B_1')}=1$. Further, we note
\begin{align*}
\|u_R\|_{L^{\infty}(\R^n\times\{0\})} \leq C_0.
\end{align*}
As a consequence, {for $R\geq 4$}, Corollary \ref{cor:order_of_van} is applicable (with $K=R^{-n/2}$) and yields
\begin{align*}
(1+\|q_R\|_{L^{\infty}}^{\frac{1}{2s}})\|u_R\|_{L^{\infty}(B_r')} \geq C_c r^{C_c (\|q_R\|_{L^{\infty}}^{\frac{2}{4s-1}}+1)}r^{n\tilde\gamma \log R}.
\end{align*}
Here $q_R(x) = R^{2s}q(R (x + \frac{x_0}{R}))$ denotes the potential in the Schrödinger equation for $u_R$. Choosing $r=\frac{1}{R}$ and using $\|q_R\|_{L^{\infty}(\R^n)} \leq R^{2s}$, we hence infer
\begin{align*}
\|u\|_{L^{\infty}(B_1'(x_0))} = \|u_R\|_{L^{\infty}(B_{1/R}')} \geq C_c (1+\|q_R\|_{L^{\infty}}^{\frac{1}{2s}})^{-1} e^{- C_c R^{\frac{4s}{4s-1}}\log R}e^{-n\gamma R\log R}.
\end{align*}
Enlarging the constant $C_c$ slightly then concludes the proof.
\end{proof}

\bibliographystyle{alpha}
\bibliography{citationsLandis_a}

\end{document}